\newtheoremstyle{break}
  {\topsep}{\topsep}%
  {\itshape}{}%
  {\bfseries}{}%
  {\newline}{}%
\theoremstyle{break}
\newtheorem{theorem}{Theorem}
\newtheorem{remark}{Remark}
\newtheorem{definition}{Definition}
\newtheorem{lemma}{Lemma}
\newtheorem{algorithm}{Algorithm}
\begin{document}
\title{Ridge Regression Revisited: \\ Debiasing, Thresholding and Bootstrap}
\author{ Yunyi Zhang
\thanks{Department of Mathematics,
        Univ.~of California--San Diego,
          La Jolla, CA 92093-0112, USA;
             yuz334@ucsd.edu }$\ \text{ and }$ Dimitris N. Politis\thanks{Department of Mathematics
and Halicioglu Data Science Institute,
        Univ.~of California--San Diego,
          La Jolla, CA 92093-0112, USA;
                        dpolitis@ucsd.edu }}
\maketitle
\begin{abstract}
The success of the Lasso  in  the era of high-dimensional data can be attributed to its conducting an implicit
model selection, i.e., zeroing out regression coefficients that are not significant. By contrast,
classical ridge regression can not reveal a potential sparsity of parameters, and
may also introduce a large bias under the high-dimensional setting.
Nevertheless, recent work on the Lasso  involves debiasing and thresholding, the latter in order
to further enhance the model selection.
As a consequence,   ridge regression may be worth another look since --after  debiasing and thresholding--
it may offer some advantages over the Lasso, e.g.,  it can be easily computed using a closed-form expression.
In this paper, we define
 a debiased and thresholded ridge regression method, and prove a  consistency result and a Gaussian approximation theorem. We further introduce a wild
 bootstrap algorithm to construct   confidence regions and perform hypothesis testing for a linear combination  of parameters. In addition to estimation,
we consider the problem of prediction, and present a novel, hybrid
 bootstrap algorithm tailored for  prediction intervals.
Extensive numerical simulations further  show that the debiased and thresholded ridge regression has  favorable
 finite sample performance and may be preferable in some settings.
\end{abstract}
\vskip .1in
{\bf Keywords:} Gaussian approximation, high-dimensional data, Lasso,  prediction, regression, resampling.

\newpage
\section{Introduction}
Linear regression is a fundamental topic in statistical inference. The classical setting assumes the dimension of parameters in a linear model is constant. However, in the modern era, observations may have a comparable or even larger dimension than the number of samples. To perform a consistent estimation
with high-dimensional data, statisticians often assume the underlying parameters are sparse (i.e., the parameter vector contains lots of zeros), and proceed with statistical inference based on this assumption.

The success of the Lasso  in the setting of high-dimensional data can be attributed to its conducting an implicit
model selection, i.e., zeroing out regression coefficients that are not significant;
see Tibshirani \cite{doi:10.1111/j.2517-6161.1996.tb02080.x}. More recent work includes:
  Meinshausen and B\"{u}hlmann \cite{meinshausen2006}, Meinshausen and Yu \cite{meinshausen2009}, and van de Geer \cite{vandegeer2008} for the Lasso estimator's (model-selection) consistency and applications; Chatterjee and Lahiri \cite{10.2307/41059185}, \cite{10.2307/41416396}, Zhang and Cheng \cite{doi:10.1080/01621459.2016.1166114}, and Dezeure et al. \cite{Dezeure2017} for confidence interval construction and hypothesis testing; and Javanmard and Montanari \cite{javanmard2018}, Fan and Li \cite{doi:10.1198/016214501753382273}, and Chen and Zhou \cite{chen2020} for improvements of the Lasso estimator. Although the Lasso has the desirable property of
zeroing out some regression coefficients, van de Geer et al. \cite{10.1214/11-EJS624}
proposed to further {\it threshold}  the estimated coefficients, leading to a
sparser fitted model. Furthermore, Javanmard and Montanari \cite{10.5555/2031491}, and Dezeure et al. \cite{Dezeure2017},  proposed to {\it  debias} the
Lasso in constructing confidence intervals. See van de Geer \cite{10.1214/19-EJS1599} and Javanmard and Javadi \cite{10.1214/19-EJS1554} for recent works of debiased Lasso.

An alternative approach providing consistent estimators for a high dimensional linear model is  the so-called {\it post-selection inference}. It first applies Lasso to select influential parameters, then fits an ordinary least squares regression on the selected parameters; see Lee et al. \cite{lee2016}, Liu and Yu \cite{liu2013}, and Tibshirani et al. \cite{tibshirani2018}.
We refer to  B\"{u}hlmann and van de Geer \cite{10.5555/2031491} for a comprehensive overview  of the Lasso method for high dimensional data.

Ridge regression is   a classical method, and its estimator has a closed-form expression, making statistical inference easier than Lasso. However, there is relatively little research on the ridge regression under the high-dimensional setting. Shao and Deng \cite{shao2012} proposed a threshold ridge regression method and proved its consistency. Dai et al. \cite{DAI2018334} introduced a broken adaptive ridge estimator to approximate $L_0$ penalized regression. Dobriban and Wager \cite{dobriban2018} derived the limit of high dimensional ridge regression's expected predictive risk. B\"{u}hlmann \cite{buhlmann2013} used Lasso to correct the bias in a ridge regression estimator, while Lopes \cite{Lopes2014ARB} applied a residual-based
 bootstrap to construct confidence intervals.

Three issues have prevented ridge regression from being suitable for a high dimensional linear model:

1. {\it  The ridge regression cannot preserve/recover sparsity.} Typically, a ridge regression  estimator of the parameter vector
 will  not contain any zeros, even though the parameters may be sparse.

2. {\it Bias in the ridge regression estimator can be large.} To illustrate this, suppose the parameter of interest is $a^T\beta$ in a linear model $y = X\beta + \epsilon$; here, the dimension $p < n$ (the sample size), $X$ has rank $p$, and $a$ is a known vector. The ridge estimator  is $a^T\widetilde{\theta}^\star$ with $\widetilde{\theta}^\star = (X^TX + \rho_nI_p)^{-1}X^Ty$, for some $\rho_n > 0$, with $I_p$ denoting the $p$-dimensional identity matrix. Performing a thin singular value decomposition $X = P\Lambda Q^T$ (as in Theorem 7.3.2 in Horn and Johnson \cite{matrix}),
and assuming the error vector $\epsilon$
consists of independent identically distributed (i.i.d.) components,
  the bias and the standard deviation can be calculated (and controlled) as follows:
\begin{equation}
\begin{aligned}
\mathbf{E}a^T\widetilde{\theta}^\star - a^T\beta = -\rho_na^TQ(\Lambda^2 + \rho_n I_p)^{-1} Q^T\beta \ \mbox{ which implies } \   \vert \mathbf{E}a^T\widetilde{\theta}^\star - a^T\beta\vert\leq \frac{\rho_n\Vert a\Vert_2\times \Vert\beta\Vert_2}{\lambda^2_{p} + \rho_n}\\
\mbox{ and } \   \sqrt{Var(a^T\widetilde{\theta}^\star)} = \sqrt{Var(\epsilon_1)\times a^TQ(\Lambda^2 + \rho_n I_p)^{-2}\Lambda^2 Q^Ta}\leq\frac{\sqrt{Var(\epsilon_1)}\times \Vert a\Vert_2}{\lambda_p}.
\end{aligned}
\end{equation}

In the above,  $\lambda_p$ is the smallest singular value of $X$, and $\Vert\cdot\Vert_2$ is the Euclidean norm of a vector. If $\Vert\beta\Vert_2$ does not have a bounded order, the bias may tend to infinity. Another critical problem is that the absolute value of the bias can be significantly larger than the standard deviation, which makes constructing confidence intervals difficult.

3. {\it When the dimension of parameters is larger than the sample size, ridge regression estimates the projection of parameters on the linear space spanned by rows of $X$} (Shao and Deng \cite{shao2012}). The projection (which can now be considered to be the `parameters' of the linear model) is not sparse, bringing extra burdens for statistical inference.

The third issue comes from the nature of ridge regression, and it is not necessarily bad; our section \ref{EXPER} provides an example to illustrate this. The first two issues can be solved by {\it thresholding and debiasing}
respectively, yielding an {\it improved} ridge regression that will be the focus of this paper.
If the Lasso is in need of thresholding and debiasing --as van de Geer et al. \cite{10.1214/11-EJS624}, Dezeure et al. \cite{Dezeure2017}, and  B\"{u}hlmann and van de Geer \cite{10.5555/2031491} seem to suggest-- then it
loses some of its attractiveness, in which case  (improved)  ridge regression
may be worth another look.
If (improved)  ridge regression  turns out to have comparable performance to threshold Lasso, then the
former would be preferable since  it can be easily computed using a closed-form expression.
 Indeed, numerical simulations in section \ref{Numerical} indicate that   improved ridge regression has  favorable  finite-sample performance, and  has a further advantage over the Lasso:
 it is {\it robust} against a non-optimal choice of the   hyperparameters.

Apart from   point estimation using improved  ridge regression, this paper presents a Gaussian approximation theorem for the improved ridge regression estimator. Applying this result,   we
propose a wild bootstrap  algorithm to
  construct  a   confidence region for $\gamma = M\beta$ with $M$ a known matrix  and/or test  the null hypothesis $\gamma = \gamma_0$ with $\gamma_0$ a known vector, versus the alternative hypothesis $\gamma \neq \gamma_0$. The wild bootstrap was developed in the 1980s by Wu \cite{10.1214/aos/1176350142} and Liu \cite{10.1214/aos/1176351062};
its applicability to high-dimensional problems was recognized early on by Mammen \cite{10.1214/aos/1176349025}. Here we will use the wild bootstrap in its Gaussian residuals version that has been found useful in high-dimensional regression; see Chernozhukov et al. \cite{chernozhukov2013}.
 Estimating and testing $\gamma$ are important problems in econometrics, e.g., Dolado and L\"{u}tkepohl \cite{doi:10.1080/07474939608800362}, Sun \cite{https://urldefense.com/v3/__https://doi.org/10.1111/j.1368-423X.2012.00390.x__;!!Mih3wA!Q9aoQVO8cn4eWInRmrOzFAYeiX19a7xKzrpyC3_noHS-Dc6SZcpIQu17FxU-vidS2g$ }, \cite{SUN2011345}, and Gon\c{c}alves and Vogelsang \cite{gv2011}. Besides, estimating $\gamma$ directly contributes to prediction, which is an important topic in modern age statistics.

Finally, we consider statistical  prediction   based on the improved ridge regression estimator for a high-dimensional linear model. For a regression problem, quantifying a predictor's accuracy can be as important as predicting accurately.
To do that, it is useful to be able to construct a  prediction interval to accompany the point
prediction; this is usually done by some form of bootstrap; see
 Stine \cite{doi:10.1080/01621459.1985.10478220} for a classical result,  and Politis \cite{model_free} for a comprehensive treatment of both model-based and model-free prediction intervals in regression.
As an alternative to the bootstrap, conformal prediction may be a tool to yield
prediction intervals; see e.g. Romano et al. \cite{NEURIPS2019_5103c358} and Romano et al. \cite{doi:10.1080/01621459.2019.1660174}.
In our point of view, however,  the bootstrap
 is preferable as it captures the underlying variability of
estimated quantities; Section \ref{BOOTPRD} in what follows gives the details.

The remainder of this paper is organized as follows: Section \ref{prelimary} introduces   frequently used notations and assumptions. Section \ref{CLT} presents the consistency result and the Gaussian approximation theorem for the improved ridge regression estimator. Section \ref{BOOT} constructs a confidence region for $\gamma = M\beta$, and tests the null hypothesis $\gamma = \gamma_0$ versus the alternative hypothesis $\gamma \neq\gamma_0$ via a bootstrap algorithm.
Section \ref{BOOTPRD}   constructs bootstrap prediction intervals in our ridge  regression setting
using a novel, hybrid resampling procedure.
 Finally,   Section \ref{Numerical} provides extensive simulations to illustrate the finite sample performance, while Section \ref{CONCLU} gives some concluding
remarks; technical  proofs are deferred to the Appendix.

\section{Preliminaries}
\label{prelimary}
Our work focuses on the  fixed design  linear model
\begin{equation}
y = X\beta + \epsilon \label{LINModel}
\end{equation}
where the (unknown) parameter vector $\beta$ is $p$-dimensional, and
the $n\times p$ fixed (nonrandom) design matrix $X$  is assumed to have rank $r$. Define the known
matrix of  linear combination coefficients as $M = (m_{ij})_{i = 1,2,...,p_1,j = 1,2,...,p}$  so that  $M$ has $p_1$ rows.
The  linear combination of interest are $\gamma = (\gamma_1,...,\gamma_{p_1})^T = M\beta$.

Perform a thin singular value decomposition $X = P\Lambda Q^T$  as in Theorem 7.3.2 in Horn and Johnson \cite{matrix};
here,  $P$ and $Q$ respectively is $n\times r$ and $p\times r$ orthonormal matrices
that  satisfy $P^TP = Q^TQ = I_r$, where $I_r$ denotes  the $r\times r$ identity matrix.
Furthermore,  $\Lambda = diag(\lambda_1,...,\lambda_r)$, and $\lambda_1\geq \lambda_2\geq...\geq \lambda_r>0$ are positive singular values of $X$. 

Denote $Q_\perp$ as the $p\times (p - r)$ orthonormal complement of $Q$; then we have
\begin{equation}
Q_{\perp}^T Q_{\perp} = I_{p - r},\ Q^TQ_\perp = 0,\ \text{and } QQ^T+Q_\perp Q_\perp^T = I_p ;
\end{equation}
in the above, $0$ is the $r\times (p-r)$ matrix having all elements $0$. Define $\zeta = Q^T\beta$ and $\theta = (\theta_1,...,\theta_p)^T= Q\zeta = QQ^T\beta$, then $X\beta = X\theta$, $\theta^T\theta = \zeta^TQ^TQ\zeta = \zeta^T\zeta$. According to Shao and Deng \cite{shao2012}, the ridge regression estimates $\theta$ rather than $\beta$.

Define $\theta_\perp = Q_\perp Q_\perp^T\beta$, so $\beta = \theta + \theta_\perp$. If the design matrix $X$ has rank $p \leq n$, then $Q_\perp$ does not exist. In this situation, we define $\theta_\perp = 0$, the $p$ dimensional vector with all elements $0$. For a threshold $b_n$, define the set $\mathcal{N}_{b_n} = \{i\ | \vert\theta_i\vert>b_n\}$. After selecting a suitable $b_n$, define
\begin{equation}
c_{ik} = \sum_{j\in\mathcal{N}_{b_n}}m_{ij}q_{jk},\ \forall\ i=1,2,...,p_1,\ k = 1,2,...,r,\ \text{and }\mathcal{M} = \{i\ | \sum_{k=1}^rc_{ik}^2 > 0\}
\label{def_cik}
\end{equation}
Define $\tau_i,\ i=1,2,...,p_1$ as
\begin{equation}
\begin{aligned}
\tau_i = \sqrt{\sum_{k=1}^r c_{ik}^2\left(\frac{\lambda_k}{\lambda_k^2 + \rho_n} + \frac{\rho_n\lambda_k}{(\lambda_k^2 + \rho_n)^2}\right)^2 + \frac{1}{n}}
\end{aligned}
\label{DefVar}
\end{equation}

We will use the standard order notations $O(\cdot),\ o(\cdot),\ O_p(\cdot),$ and $o_p(\cdot)$. For two numerical sequences $a_n, b_n, n = 1,2,...$, we say $a_n = O(b_n)$ if $\exists$ a constant $C>0$ such that $\vert a_n\vert\leq C\vert b_n\vert$ for all $n$, and $a_n = o(b_n)$ if $\lim_{n\to\infty} \frac{a_n}{b_n} = 0$. For two random variable sequences $X_n, Y_n$, we say $X_n = O_p(Y_n)$ if for any $0<\epsilon<1$, $\exists$ a constant $C_\epsilon > 0$ such that $\sup_{n} Prob(\vert X_n\vert\geq C_\epsilon \vert Y_n\vert)\leq \epsilon$; and $X_n = o_p(Y_n)$ if $\frac{X_n}{Y_n}\to_p 0$; see e.g. Definition 1.9 and Chapter 1.5.1 of Shao \cite{Mstat}.
{\it All order notations and convergences in this paper will be understood to hold
as   the sample size $n\to\infty$.}

 For a finite set $A$, $\vert A\vert$ denotes the number of elements in $A$. Notations
 $\exists$ and  $\forall $ denote ``there exists" and ``for all" respectively.
 $Prob^*\left(\cdot\right)$ and $\mathbf{E}^*\cdot$ respectively represent probability and expectation in the {\it ``bootstrap world"},
i.e., they are
 the conditional probability $Prob(\cdot|y)$ and the conditional expectation $\mathbf{E}(\cdot|y)$.

Suppose $H(x)$ is a cumulative distribution function and $0<\alpha < 1$; then the $1-\alpha$ quantile of $H$ is defined as
\begin{equation}
c_{1-\alpha} = \inf\{x\in\mathbf{R}|H(x)\geq 1-\alpha\} .
\label{defQuan}
\end{equation}
In particular, given some order statistics  $X_1\leq X_2\leq...\leq X_B$, the $1-\alpha$ sample quantile $C_{1-\alpha}$ is defined as
\begin{equation}
C_{1-\alpha} = X_{i_*}\ \ \text{such that }i_* = \min\left\{i \ \Big|\
\frac{1}{B}\sum_{j=1}^B\mathbf{1}_{X_j\leq X_i}\geq  1-\alpha \right\} .
\end{equation}
Other notations will be defined before being used. Without being explicitly specified, the convergence results in this paper assume the sample size $n\to\infty$.

The high dimensionality in this paper comes from two aspects: the number of parameters $p$ may increase with the sample size $n$, and (for statistical inference/hypothesis testing) the number of simultaneous linear combinations $p_1$ and $\vert\mathcal{M}\vert$ can also increase with $n$.


Our work adopts the following assumptions:

\textbf{Assumptions}

1. Assume a fixed design, i.e., the design matrix $X$ is deterministic. Also assume that
there exist constants $c_\lambda, C_\lambda > 0$, $0<\eta\leq 1/2$, such that the positive singular values of $X$ satisfy
\begin{equation}
C_\lambda n^{1/2}\geq \lambda_1\geq \lambda_2\geq...\geq\lambda_r\geq c_\lambda n^\eta .
\end{equation}
Furthermore, the Euclidean norm of $\theta$
is assumed to satisfy $\Vert\theta\Vert_2 = \sqrt{\sum_{i=1}^p\theta_i^2}= O(n^{\alpha_\theta})$ with $0<\alpha_\theta < 3\eta$.

2. The ridge parameter satisfies  $\rho_n = O(n^{2\eta - \delta})$ with a
positive constant $\delta$ such that $\frac{\eta+\alpha_\theta}{2}<\delta<2\eta$

3. The errors $\epsilon = (\epsilon_1,...,\epsilon_n)^T$
driving regression (\ref{LINModel}) are assumed to be  i.i.d.,
with $\mathbf{E}\epsilon_1 = 0$, and $\mathbf{E}\vert\epsilon_1\vert^m < \infty$
for some $m>4$.

4. The dimension of the  parameter
vector $\beta$   satisfies $p = O(n^{\alpha_p})$ for some constant $  \alpha_p  \in [0,m\eta )$
where $m, \eta$ are as defined in Assumptions 1--3. Furthermore,
the threshold $b_n$ is chosen as $b_n = C_b\times n^{-\nu_b}$ with constants $C_b,\nu_b > 0$ and $\nu_b + \frac{\alpha_p}{m} - \eta < 0$. We assume $\exists$ a constant $0<c_b<1$ such that $\max_{i\not\in\mathcal{N}_{b_n}}\vert\theta_i\vert\leq c_b\times b_n$, and $\min_{i\in\mathcal{N}_{b_n}}\vert\theta_i\vert\geq \frac{b_n}{c_b}$.

5. $\mathcal{M}$ (defined in \eqref{def_cik}) is not empty
and $\vert\mathcal{M}\vert = O(n^{\alpha_\mathcal{M}})$ with $\alpha_\mathcal{M}< m\eta$
where $m, \eta$ are as defined in Assumptions 1--3.  Besides,  assume $\exists$
 constants $c_\mathcal{M}, C_\mathcal{M}$ such that $0<c_{\mathcal{M}}<\sum_{k=1}^rc_{ik}^2\leq C_\mathcal{M}$ for all $i\in\mathcal{M}$. Also assume
\begin{equation}
\max_{i=1,2,...,p_1}\vert\sum_{j\not\in\mathcal{N}_{b_n}}m_{ij}\theta_j\vert = o\left(\frac{1}{\sqrt{n\log(n)}}\right)\ \text{and}\ \max_{i=1,2,...,p_1}\vert\sum_{j=1}^pm_{ij}\theta_{\perp,j}\vert = o\left(\frac{1}{\sqrt{n\log(n)}}\right)
\label{UnseenBias}
\end{equation}

6. $\exists$ a constant $\alpha_\sigma$ satisfying $\eta\geq \alpha_\sigma > 0$
such that
\begin{equation}
n^{-\nu_b}\sum_{j\not\in\mathcal{N}_{b_n}}\vert\theta_j\vert = O(n^{-\alpha_\sigma}),\ \ \frac{\sqrt{\vert\mathcal{N}_{b_n}\vert}}{n^{\eta}} = O(n^{-\alpha_\sigma})
\end{equation}

7. $\vert\mathcal{M}\vert\leq r$,   the matrix $T = (c_{ik})_{i\in\mathcal{M},k=1,2,...,r}$ has rank $\vert\mathcal{M}\vert$, and one   of the two following conditions holds true:

\qquad 7.1.
\begin{equation}
\max_{i\in\mathcal{M},l=1,2,...,n}\vert\frac{1}{\tau_i}\times \sum_{k=1}^rc_{ik}p_{lk}\left(\frac{\lambda_k}{\lambda_k^2 + \rho_n} + \frac{\rho_n\lambda_k}{(\lambda_k^2 + \rho_n)^2}\right)\vert = o(\min(n^{(\alpha_\sigma - 1)/2}\times \log^{-3/2}(n),\ n^{-1/3}\times\log^{-3/2}(n))
\end{equation}

\qquad 7.2. $\alpha_\sigma < 1/2$ and
\begin{equation}
\begin{aligned}
\vert\mathcal{M}\vert = o(n^{\alpha_\sigma} \times \log^{-3}(n))\\
\max_{i\in\mathcal{M},l=1,2,...,n}\vert\frac{1}{\tau_i}\times \sum_{k=1}^rc_{ik}p_{lk}\left(\frac{\lambda_k}{\lambda_k^2 + \rho_n} + \frac{\rho_n\lambda_k}{(\lambda_k^2 + \rho_n)^2}\right)\vert = O(n^{-\alpha_\sigma}\times\log^{-3/2}(n))
\end{aligned}
\end{equation}

\begin{remark}

The intuitive  meaning of assumption 4 is that the $\theta_i$s that are not being truncated should be significantly larger than the $\theta_i$ being truncated.  Furthermore,
note that
\begin{equation}
\sum_{l = 1}^n \left(\sum_{k=1}^rc_{ik}p_{lk}\left(\frac{\lambda_k}{\lambda_k^2 + \rho_n} + \frac{\rho_n\lambda_k}{(\lambda_k^2 + \rho_n)^2}\right)\right)^2 = \sum_{k = 1}^r c^2_{ik}\left(\frac{\lambda_k}{\lambda^2_k + \rho_n} + \frac{\rho_n\lambda_k}{(\lambda^2 + \rho_n)^2}\right)^2\leq \tau^2_i .
\end{equation}
Therefore, assumption 7 implies that no single element in the matrix $\left(\frac{1}{\tau_i}\sum_{k=1}^rc_{ik}p_{lk}\left(\frac{\lambda_k}{\lambda_k^2 + \rho_n} + \frac{\rho_n\lambda_k}{(\lambda_k^2 + \rho_n)^2}\right)\right)_{i \in\mathcal{M}, l = 1,...,n}$ dominates the others. We use $\tau_i, i = 1,...,p_1$ to prevent the normalizing parameters from being $0$. We do not require that the design matrix has rank $\min(n, p)$ or that $p<n$. However, when these conditions are not satisfied, the sparsity of $\theta$, i.e., assumption 6, can be violated. Section \ref{EXPER} uses a numerical simulation to illustrate this problem.

\label{remark1}
\end{remark}

\section{Consistency and the Gaussian approximation theorem}
\label{CLT}

Throughout, we will use the  notations developed  in section \ref{prelimary}. For a chosen ridge parameter $\rho_n > 0$, define the classical ridge regression estimator $\widetilde{\theta}^\star$ and the de-biased estimator $\widetilde{\theta}$ as
\begin{equation}
\begin{aligned}
\widetilde{\theta}^{\star} = (X^TX + \rho_n I_p)^{-1}X^Ty\\
\widetilde{\theta} = (\widetilde{\theta}_1,...,\widetilde{\theta}_p)^T = \widetilde{\theta}^\star + \rho_n\times Q(\Lambda^2 + \rho_nI_r)^{-1}Q^T\widetilde{\theta}^\star
\end{aligned}
\label{MainStat}
\end{equation}
Then we have
\begin{equation}
\begin{aligned}
\widetilde{\theta} - \theta = - \rho_n^2Q(\Lambda^2 + \rho_nI_r)^{-2}\zeta + Q\left((\Lambda^2 + \rho_nI_r)^{-1}\Lambda+ \rho_n(\Lambda^2 + \rho_nI_r)^{-2}\Lambda\right)P^T\epsilon
\end{aligned}
\label{EXPAN}
\end{equation}

Similar to $\mathcal{N}_{b_n}$, define the set $\widehat{\mathcal{N}}_{b_n}$, the estimator $\widehat{\theta} = (\widehat{\theta}_1,...,\widehat{\theta}_p)^T$ and $\widehat{\gamma}$ as
\begin{equation}
\widehat{\mathcal{N}}_{b_n} = \left\{i\ \Big|\vert\widetilde{\theta}_i\vert > b_n\right\},\ \ \widehat{\theta}_i = \widetilde{\theta}_i \times \mathbf{1}_{i\in\widehat{\mathcal{N}}_{b_n}},\ \ \widehat{\gamma} = M\widehat{\theta}
\label{HAT}
\end{equation}
Then, $\widehat{\theta}$ and $\widehat{\gamma}$ constitute
 the improved, i.e., debiased and thresholded, ridge regression estimator for the parameter vector $\theta$ and $\gamma = M\beta$ respectively. Apart from parameter estimation, we need to estimate the error variance $\sigma^2 = \mathbf{E}\epsilon^2_1$. The estimator for $\sigma^2$ is \begin{equation}
\widehat{\sigma}^2 = \frac{1}{n}\sum_{i=1}^n (y_i - \sum_{j=1}^p x_{ij}\widehat{\theta}_j)^2
\label{defSigma}
\end{equation}

\begin{theorem}
\label{thm1}
1. Suppose assumptions 1 to 5 hold true. Then
\begin{equation}
Prob\left(\widehat{\mathcal{N}}_{b_n}\neq \mathcal{N}_{b_n}\right) = O(n^{\alpha_p + m\nu_b - m\eta})\ \ \text{and  } \max_{i=1,2,...,p_1}\vert\widehat{\gamma}_i - \gamma_i\vert  = O_p(\vert\mathcal{M}\vert^{1/m}\times n^{-\eta})
\label{SELConst}
\end{equation}
where
$\gamma_i, i = 1,...,p_1$ and $\mathcal{N}_{b_n}$ are defined in section \ref{prelimary}.

\noindent 2. Suppose assumptions 1 to 6 hold true.  Then
\begin{equation}
\vert\widehat{\sigma}^2 - \sigma^2\vert = O_p(n^{-\alpha_\sigma}) .
\end{equation}
\label{THM_CON}
\end{theorem}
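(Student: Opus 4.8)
The plan is to build everything on the exact expansion \eqref{EXPAN}, which splits the debiased error into a deterministic bias term $\mathrm{B}:=-\rho_n^2Q(\Lambda^2+\rho_nI_r)^{-2}\zeta$ and a linear-in-$\epsilon$ noise term $\mathrm{V}:=Q\,W\,P^T\epsilon$, where $W=\mathrm{diag}(w_k)$ with $w_k=\frac{\lambda_k}{\lambda_k^2+\rho_n}+\frac{\rho_n\lambda_k}{(\lambda_k^2+\rho_n)^2}$. First I would record two building-block estimates used throughout. For the bias, since $\lambda_k\ge c_\lambda n^\eta$ dominates $\rho_n=O(n^{2\eta-\delta})$ (Assumptions 1--2), one has $\rho_n^2/(\lambda_k^2+\rho_n)^2=O(n^{-2\delta})$, so $\Vert \mathrm{B}\Vert_2\le\rho_n^2\max_k(\lambda_k^2+\rho_n)^{-2}\Vert\zeta\Vert_2=O(n^{\alpha_\theta-2\delta})$; the constraint $\delta>(\eta+\alpha_\theta)/2$ forces $\alpha_\theta-2\delta<-\eta$, so the bias is $o(n^{-\eta})$ coordinatewise. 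For the noise, writing $\mathrm{V}_j=\sum_l d_{jl}\epsilon_l$ with $d_{jl}=\sum_k q_{jk}w_kp_{lk}$ and using $P^TP=Q^TQ=I_r$, one gets $\sum_l d_{jl}^2=\sum_k q_{jk}^2w_k^2=O(n^{-2\eta})$ (as $w_k=O(n^{-\eta})$ and the rows of $Q$ have norm at most $1$); Rosenthal's inequality with Assumption 3 (both the variance and $m$-th moment terms) then yields $\mathbf{E}\vert \mathrm{V}_j\vert^m=O(n^{-m\eta})$.

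For Part 1, the set-recovery claim follows from the coordinate separation in Assumption 4: if $\widehat{\mathcal N}_{b_n}$ and $\mathcal N_{b_n}$ disagree at index $i$, then $\widetilde\theta_i$ crossed the threshold $b_n$, which (since $\vert\theta_i\vert\le c_bb_n$ off $\mathcal N_{b_n}$ and $\ge b_n/c_b$ on it) forces $\vert\widetilde\theta_i-\theta_i\vert\ge C'b_n$ for a fixed $C'>0$. As $\nu_b<\eta$, the bias is $o(b_n)$, so this reduces to $\vert\mathrm{V}_i\vert\ge(C'/2)b_n$; Markov's inequality with the moment bound gives $Prob(\vert\mathrm{V}_i\vert\ge(C'/2)b_n)=O(n^{-m\eta}/b_n^m)=O(n^{m\nu_b-m\eta})$, and a union bound over the $p=O(n^{\alpha_p})$ coordinates yields $O(n^{\alpha_p+m\nu_b-m\eta})$. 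For $\widehat\gamma$ I would work on the high-probability event $\{\widehat{\mathcal N}_{b_n}=\mathcal N_{b_n}\}$, on which $\widehat\gamma_i-\gamma_i=\sum_{j\in\mathcal N_{b_n}}m_{ij}(\widetilde\theta_j-\theta_j)-\sum_{j\notin\mathcal N_{b_n}}m_{ij}\theta_j-\sum_j m_{ij}\theta_{\perp,j}$. The last two sums are $o(1/\sqrt{n\log n})$ by Assumption 5, while the first equals $-\rho_n^2\sum_k c_{ik}\zeta_k(\lambda_k^2+\rho_n)^{-2}+G_i$ with $c_{ik}$ as in \eqref{def_cik} and $G_i:=\sum_k c_{ik}w_k(P^T\epsilon)_k$; its bias part is $O(n^{\alpha_\theta-2\delta})=o(n^{-\eta})$, and $G_i$ vanishes off $\mathcal M$ and has $\mathbf{E}\vert G_i\vert^m=O(n^{-m\eta})$, so $\mathbf{E}\max_{i\in\mathcal M}\vert G_i\vert^m\le\sum_{i\in\mathcal M}\mathbf{E}\vert G_i\vert^m=O(\vert\mathcal M\vert n^{-m\eta})$, giving $O_p(\vert\mathcal M\vert^{1/m}n^{-\eta})$.

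For Part 2 I would use $y=X\theta+\epsilon$ and \eqref{defSigma} to write
\[
\widehat\sigma^2-\sigma^2=\Big(\tfrac1n\Vert\epsilon\Vert_2^2-\sigma^2\Big)+\tfrac2n\epsilon^TX(\theta-\widehat\theta)+\tfrac1n\Vert X(\theta-\widehat\theta)\Vert_2^2 .
\]
The first term is $O_p(n^{-1/2})=O_p(n^{-\alpha_\sigma})$ since $\mathbf{E}\epsilon_1^4<\infty$ and $\alpha_\sigma\le\eta\le 1/2$. On $\{\widehat{\mathcal N}_{b_n}=\mathcal N_{b_n}\}$ I would split $\theta-\widehat\theta=u^{(1)}+u^{(2)}$ with $u^{(1)}_j=-(\widetilde\theta_j-\theta_j)\mathbf{1}_{j\in\mathcal N_{b_n}}$ and $u^{(2)}_j=\theta_j\mathbf{1}_{j\notin\mathcal N_{b_n}}$, and use $\Vert Xv\Vert_2^2\le\lambda_1^2\Vert v\Vert_2^2$ with $\lambda_1^2/n=O(1)$. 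For $u^{(2)}$, Assumption 6 gives $\Vert u^{(2)}\Vert_2^2\le c_bb_n\sum_{j\notin\mathcal N_{b_n}}\vert\theta_j\vert=O(n^{-\alpha_\sigma})$, so $\tfrac1n\Vert Xu^{(2)}\Vert_2^2=O(n^{-\alpha_\sigma})$; for $u^{(1)}$, the noise part satisfies $\mathbf{E}\sum_{j\in\mathcal N_{b_n}}\mathrm{V}_j^2\le\sigma^2(\max_kw_k^2)\vert\mathcal N_{b_n}\vert=O(n^{-2\eta})O(n^{2\eta-2\alpha_\sigma})=O(n^{-2\alpha_\sigma})$ (again using Assumption 6) and the bias part is smaller, so $\tfrac1n\Vert Xu^{(1)}\Vert_2^2=O_p(n^{-2\alpha_\sigma})$. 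For the cross term I would treat the pieces separately: $\tfrac2n\epsilon^TXu^{(1)}\le 2\sqrt{\Vert\epsilon\Vert_2^2/n}\,\sqrt{\Vert Xu^{(1)}\Vert_2^2/n}=O_p(n^{-\alpha_\sigma})$ by Cauchy--Schwarz, while $u^{(2)}$ is deterministic, so $\tfrac2n\epsilon^TXu^{(2)}$ has variance $O(n^{-1-\alpha_\sigma})$ and is $o_p(n^{-\alpha_\sigma})$; absorbing the negligible probability of the complementary event gives $\vert\widehat\sigma^2-\sigma^2\vert=O_p(n^{-\alpha_\sigma})$.

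The main obstacle is keeping the exponents sharp rather than merely showing errors vanish. In Part 1 this means extracting exactly $n^{m\nu_b-m\eta}$ and the factor $\vert\mathcal M\vert^{1/m}$, which rests on the $m$-th moment control of the noise via Rosenthal and hence on $m>4$; a crude second-moment bound would lose the polynomial rate. In Part 2 the delicate point is the cross term: a naive Cauchy--Schwarz against the whole $\theta-\widehat\theta$ yields only $O_p(n^{-\alpha_\sigma/2})$, so one genuinely needs the sharper $\tfrac1n\Vert Xu^{(1)}\Vert_2^2=O_p(n^{-2\alpha_\sigma})$ --- which depends on the cardinality control $\sqrt{\vert\mathcal N_{b_n}\vert}/n^\eta=O(n^{-\alpha_\sigma})$ from Assumption 6 --- together with the separate, independence-based handling of the deterministic part $u^{(2)}$, whose cross term is an order smaller.
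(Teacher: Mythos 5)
Your proposal is correct and follows essentially the same route as the paper's proof: the same expansion \eqref{EXPAN} into bias and noise, the same Cauchy--Schwarz bias bound $O(n^{\alpha_\theta-2\delta})=o(n^{-\eta})$, the same moment-plus-Markov-plus-union-bound control of the noise (your Rosenthal inequality plays exactly the role of the paper's Whittle-based Lemma \ref{lemma_Se}), the same restriction to the event $\{\widehat{\mathcal{N}}_{b_n}=\mathcal{N}_{b_n}\}$ via the separation in Assumption 4, and for Part 2 the same decomposition of $\widehat{\sigma}^2-\sigma^2$ (your three-term split with $u^{(1)},u^{(2)}$ is just a reorganization of the paper's six terms) with identical uses of Assumption 6 and of the variance computation for the deterministic cross term. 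No gaps; the argument and the exponent bookkeeping match the paper's.
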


\noindent
Define $\widehat{\tau}_i,\ i=1,2,...,p_1$ and $H(x), x\in\mathbf{R}$ as
\begin{equation}
\begin{aligned}
\widehat{\tau}_i = \sqrt{\sum_{k=1}^r \left(\sum_{j\in\widehat{\mathcal{N}}_{b_n}}m_{ij}q_{jk}\right)^2 \times \left(\frac{\lambda_k}{\lambda_k^2 + \rho_n} + \frac{\rho_n\lambda_k}{(\lambda_k^2 + \rho_n)^2}\right)^2 + \frac{1}{n}}\\
H(x) = Prob\left(\max_{i\in\mathcal{M}}\frac{1}{\tau_i}\vert\sum_{k=1}^r c_{ik}\left(\frac{\lambda_k}{\lambda_k^2 + \rho_n} + \frac{\rho_n\lambda_k}{(\lambda_k^2 + \rho_n)^2}\right)\xi_k\vert\leq x\right)\\
\text{Here $\xi_k,\ k=1,2,...,r$ are independent normal random variables with mean $0$ and variance $\sigma^2 = \mathbf{E}\epsilon_1^2$. }
\end{aligned}
\label{estTau}
\end{equation}

\noindent
$\vert\mathcal{M}\vert$ (defined in \eqref{def_cik}) and $p_1$ may grow as the sample size increases. In this case, the estimator $\max_{i = 1,2,..., p_1}\frac{\vert\widehat{\gamma}_i - \gamma_i\vert}{\widehat{\tau}_i}$ does not have an asymptotic distribution. However, the cumulative distribution function of $\max_{i = 1,2,..., p_1}\frac{\vert\widehat{\gamma}_i - \gamma_i\vert}{\widehat{\tau}_i}$ still can be approximated by $H(x)$ (whose expression changes as the sample size increases as well). Define $c_{1-\alpha}$ as the $1-\alpha$ quantile of $H$; theorem \ref{Thm2}
 implies that the set
\begin{equation}
\left\{\gamma = (\gamma_1,...,\gamma_{p_1})\Big| \max_{i = 1,...,p_1}\frac{\vert\widehat{\gamma}_i - \gamma_i\vert}{\widehat{\tau}_i}\leq c_{1-\alpha}\right\}
\end{equation}
is an asymptotically valid
$(1-\alpha)\times100$\% confidence region for the parameter of interest $\gamma$.

\begin{theorem}
Suppose assumptions 1 to 7 hold true. Then
\begin{equation}
\lim_{n\to\infty}\sup_{x\geq 0}\vert Prob\left(\max_{i=1,2,...,p_1}\frac{\vert\widehat{\gamma}_i - \gamma_i\vert}{\widehat{\tau}_i}\leq x\right) - H(x)\vert = 0
\label{eqCLT}
\end{equation}
where
$\gamma_i, i = 1,...,p_1$ are defined in section \ref{prelimary}.
\label{Thm2}
\end{theorem}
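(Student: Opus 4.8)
The plan is to condition on the correct-selection event, reduce the studentized maximum over all $p_1$ contrasts to a maximum of linear functionals of the i.i.d.\ errors indexed by $\mathcal M$, and then apply a high-dimensional central limit theorem. Let $\mathcal E_n=\{\widehat{\mathcal N}_{b_n}=\mathcal N_{b_n}\}$. By the first part of Theorem \ref{THM_CON}, $Prob(\mathcal E_n^c)=O(n^{\alpha_p+m\nu_b-m\eta})\to0$ (the exponent is negative by Assumption 4), so it suffices to work on $\mathcal E_n$, where $\widehat\tau_i=\tau_i$ and $\widehat\gamma_i=\sum_{j\in\mathcal N_{b_n}}m_{ij}\widetilde\theta_j$. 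Writing $g_k=\frac{\lambda_k}{\lambda_k^2+\rho_n}+\frac{\rho_n\lambda_k}{(\lambda_k^2+\rho_n)^2}$ and using the expansion \eqref{EXPAN} together with $\beta=\theta+\theta_\perp$ and the definition $c_{ik}=\sum_{j\in\mathcal N_{b_n}}m_{ij}q_{jk}$, one gets
\[
\widehat\gamma_i-\gamma_i=\sum_{k=1}^r c_{ik}g_k\,(P^T\epsilon)_k-\rho_n^2\sum_{k=1}^r c_{ik}(\lambda_k^2+\rho_n)^{-2}\zeta_k-R_i,
\]
where $R_i=\sum_{j\notin\mathcal N_{b_n}}m_{ij}\theta_j+\sum_j m_{ij}\theta_{\perp,j}$ is the ``unseen bias.'' Substituting $(P^T\epsilon)_k=\sum_l p_{lk}\epsilon_l$ and setting $a_{i,l}=\tau_i^{-1}\sum_k c_{ik}p_{lk}g_k$, the leading term divided by $\tau_i$ becomes $Z_i:=\sum_{l=1}^n a_{i,l}\epsilon_l$, a sum of $n$ independent mean-zero contributions; note $a_{i,l}$ is exactly the array bounded in Assumption 7, and $\tau_i^2=\sum_k c_{ik}^2g_k^2+1/n$.

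Next I would discard the indices $i\notin\mathcal M$ and the two deterministic biases. For $i\notin\mathcal M$ all $c_{ik}=0$, so $\tau_i=n^{-1/2}$ and $|\widehat\gamma_i-\gamma_i|=|R_i|$; the first bound in \eqref{UnseenBias} gives $\max_{i\notin\mathcal M}|\widehat\gamma_i-\gamma_i|/\tau_i=\sqrt n\cdot o(1/\sqrt{n\log n})=o(1/\sqrt{\log n})$. For the ridge bias, a Cauchy–Schwarz split against $\tau_i$, using $(\lambda_k^2+\rho_n)^{-4}g_k^{-2}=\lambda_k^{-2}(\lambda_k^2+2\rho_n)^{-2}\le\lambda_r^{-6}$ together with $\sqrt{\sum_k c_{ik}^2g_k^2}\le\tau_i$, $\lambda_r\ge c_\lambda n^\eta$, $\rho_n=O(n^{2\eta-\delta})$ and $\Vert\zeta\Vert_2=\Vert\theta\Vert_2=O(n^{\alpha_\theta})$, yields $\max_{i\in\mathcal M}\rho_n^2|\sum_k c_{ik}(\lambda_k^2+\rho_n)^{-2}\zeta_k|/\tau_i=O(n^{\eta+\alpha_\theta-2\delta})$, which is $o(1)$ precisely because $\delta>(\eta+\alpha_\theta)/2$ in Assumption 2; the second bound in \eqref{UnseenBias} gives $\max_{i\in\mathcal M}|R_i|/\tau_i=o(1/\sqrt{\log n})$. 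Since each perturbation is uniformly $o(1/\sqrt{\log|\mathcal M|})$ (as $|\mathcal M|=O(n^{\alpha_{\mathcal M}})$), Gaussian anti-concentration (Nazarov's inequality, as used in Chernozhukov et al.\ \cite{chernozhukov2013}) shows that inserting these biases into, or deleting the indices $i\notin\mathcal M$ from, the maximum alters the c.d.f.\ by $o(1)$ uniformly in $x$. Thus it remains to approximate the law of $\max_{i\in\mathcal M}|Z_i|$.

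The crux is the Gaussian approximation $\sup_{x\ge0}|Prob(\max_{i\in\mathcal M}|Z_i|\le x)-H(x)|\to0$. First I would record that the covariances match exactly: $\mathrm{Cov}(Z_i,Z_{i'})=\sigma^2\sum_l a_{i,l}a_{i',l}=\sigma^2\tau_i^{-1}\tau_{i'}^{-1}\sum_k c_{ik}c_{i'k}g_k^2$ (using $\sum_l p_{lk}p_{lk'}=\delta_{kk'}$), which is identical to the covariance of the Gaussian vector $(\tau_i^{-1}\sum_k c_{ik}g_k\xi_k)_{i\in\mathcal M}$ whose absolute maximum defines $H$. I would then invoke the high-dimensional CLT for maxima of sums of independent vectors of Chernozhukov et al.\ \cite{chernozhukov2013}, applied to the $2|\mathcal M|$-dimensional vectors $\pm(a_{i,l}\epsilon_l)_{i\in\mathcal M}$, $l=1,\dots,n$. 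Non-degeneracy holds because for $i\in\mathcal M$, $\sum_k c_{ik}^2g_k^2\ge(\min_k g_k^2)\sum_k c_{ik}^2\ge c_{\mathcal M}\min_k g_k^2\gtrsim n^{-1}$ (since $g_k\gtrsim\lambda_1^{-1}\gtrsim n^{-1/2}$), so $\mathrm{Var}(Z_i)/\sigma^2=1-(n\tau_i^2)^{-1}$ is bounded away from $0$; the moment hypothesis is Assumption 3 ($\mathbf E|\epsilon_1|^m<\infty$, $m>4$); the dimension enters through $\log|\mathcal M|=O(\log n)$ from Assumption 5; and the maximal-influence factor $\max_{i,l}|a_{i,l}|$ is exactly what Assumption 7 controls, its two cases 7.1 and 7.2 being calibrated so that both error terms of the CLT bound (the third-moment/Stein term and the truncation term trading $\max_{i,l}|a_{i,l}|$ against the $m$-th moment and $|\mathcal M|$) vanish. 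This step is the main obstacle: the work lies in translating the abstract moment and influence conditions of the high-dimensional CLT into the present $\lambda_k,\rho_n,c_{ik}$ parametrization and checking that the thresholds in Assumption 7 genuinely force the rate to zero.

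Finally, assembling the three pieces through the triangle inequality, and absorbing the $o(1)$ contribution of $Prob(\mathcal E_n^c)$, gives $\sup_{x\ge0}|Prob(\max_{i=1,\dots,p_1}|\widehat\gamma_i-\gamma_i|/\widehat\tau_i\le x)-H(x)|\to0$; restricting to $x\ge0$ is harmless since both probabilities vanish for $x<0$. This is \eqref{eqCLT}.
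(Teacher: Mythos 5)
Your proposal is correct and follows essentially the same route as the paper: condition on the correct-selection event from Theorem \ref{thm1}, sandwich the studentized maximum by $\max_{i\in\mathcal{M}}\vert\sum_{l=1}^n t_{il}\epsilon_l\vert$ up to ridge-bias and unseen-bias perturbations of size $O(n^{-\delta_1})+o(1/\sqrt{\log n})$ that are absorbed by Gaussian anti-concentration (the paper's Lemma \ref{CI_lemma_1}), and then apply a Chernozhukov-et-al.-style Gaussian approximation to the maximum of linear forms in the i.i.d.\ errors. The one step you defer to an off-the-shelf citation of \cite{chernozhukov2013} --- verifying that Assumption 7 makes the Gaussian-comparison error vanish --- is precisely what the paper's self-contained Lemma \ref{thm_1} establishes (its conditions $C_1^{'}$ and $C_2^{'}$, used with $\widehat{\sigma}=\sigma$, are exactly Assumptions 7.1 and 7.2), so the two arguments coincide in substance.
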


Gaussian approximation theorems like theorem \ref{Thm2} are useful tools not only in linear models but also in other high dimensional statistics; e.g., Chernozhukov et al. \cite{chernozhukov2013} and Zhang and Wu \cite{zhang2017}.

\section{Bootstrap inference and hypothesis testing}
\label{BOOT}
An obstacle for constructing a practical confidence region or testing a hypothesis via theorem \ref{Thm2} are the unknown $\mathcal{M}$, $\mathcal{N}_{b_n}$, and $\sigma$. Besides, $H$ is too complicated to have a closed-form formula. Fortunately, statisticians can simulate normal random variables on a computer, so they may use Monte-Carlo simulations to find the $1-\alpha$ quantile of $H$. Based on this idea, this section develops a wild bootstrap algorithm similar to Mammen \cite{10.1214/aos/1176349025} and Chernozhukov et al. \cite{chernozhukov2013} for the following tasks: constructing the confidence region for the parameter of interest $\gamma = M\beta$; and testing the null hypothesis $\gamma = \gamma_0$ (for a known $\gamma_0$) versus the alternative hypothesis $\gamma\neq \gamma_0$. Similar to Zhang and Cheng \cite{doi:10.1080/01621459.2016.1166114}, Chernozhukov et al. \cite{chernozhukov2013}, and Zhang and Wu \cite{zhang2017}, we use the maximum
statistic $\max_{i=1,2,...,p_1}\frac{\vert\widehat{\gamma}_i - \gamma_i\vert}{\widehat{\tau}_i}$ to construct a simultaneous confidence region.

\begin{algorithm}[Wild bootstrap inference and hypothesis testing]
	
\textbf{Input: } Design matrix $X$, dependent variables $y = X\beta + \epsilon$, linear combination matrix $M$, ridge parameter $\rho_n$, threshold $b_n$, nominal coverage probability $1-\alpha$, number of bootstrap replicates $B$

\noindent\textbf{Additional input for testing: } $\gamma_0 = (\gamma_{0,1},...,\gamma_{0, p_1})^T$

\noindent 1. Calculate $\widehat{\theta},\ \widehat{\gamma} = (\widehat{\gamma}_1,...,\widehat{\gamma}_{p_1})^T$ defined in \eqref{HAT}, $\widehat{\tau}_i,\ i=1,2,...,p_1$ defined in \eqref{estTau}, and $\widehat{\sigma}$ defined in \eqref{defSigma}.

\noindent 2. Generate i.i.d. errors $\epsilon^* = (\epsilon^*_1,...,\epsilon^*_n)^T$ with $\epsilon^*_i, i = 1,...,n$ having normal distribution with mean $0$ and variance $\widehat{\sigma}^2$, then calculate $y^* = X\widehat{\theta} + \epsilon^*$ and $\widehat{\theta}_{\perp} = Q_\perp Q_\perp^T\widehat{\theta}$ ($Q_\perp$ is defined in section \ref{prelimary}).

\noindent 3. Calculate $\widetilde{\theta}^{\star*} = (X^TX + \rho_n I_p)^{-1}X^Ty^*$ and $\widetilde{\theta}^* = (\widetilde{\theta}^* _1,...,\widetilde{\theta}^* _p)^T=\widetilde{\theta}^{\star *} + \rho_n\times Q(\Lambda^2 + \rho_nI_r)^{-1}Q^T\widetilde{\theta}^{\star*} + \widehat{\theta}_{\perp}$.

\noindent 4. Calculate $\widehat{\mathcal{N}}_{b_n}^* = \left\{i\Big| \vert\widetilde{\theta}_i^*\vert > b_n\right\}$ and $\widehat{\theta}^* = (\widehat{\theta}_1^*,...,\widehat{\theta}_p^*)^T$ with $\widehat{\theta}_i^* = \widetilde{\theta}_i^*\times \mathbf{1}_{i\in\widehat{\mathcal{N}}^*_{b_n}}$ for $i=1,2,...,p$.

\noindent 5. Calculate $\widehat{\gamma}^* = M\widehat{\theta}^*$, $\widehat{\tau}_i^*, i = 1,2,...,p_1$, and $E_b^*$ such that
\begin{equation}
\widehat{\tau}_i^* = \sqrt{\sum_{k=1}^r \left(\sum_{j\in\widehat{\mathcal{N}}^*_{b_n}}m_{ij}q_{jk}\right)^2 \times \left(\frac{\lambda_k}{\lambda_k^2 + \rho_n} + \frac{\rho_n\lambda_k}{(\lambda_k^2 + \rho_n)^2}\right)^2 + \frac{1}{n}},\ \  E_b^* = \max_{i=1,2,...,p_1}\frac{\vert\widehat{\gamma}^*_i - \widehat{\gamma}_i\vert}{\widehat{\tau}_i^*}
\end{equation}

\noindent 6.a (For constructing a confidence region) Repeat steps 2 to 5 for $B$ times to generate $E^*_b,\ b=1,2,...,B$; then calculate the $1-\alpha$ sample quantile $C^*_{1-\alpha}$ of $E^*_b$. The $1-\alpha$ confidence region for the parameter of interest $\gamma = M\beta$ is given by the set
\begin{equation}
\left\{\gamma = (\gamma_1,...,\gamma_{p_1})^T\Big|\max_{i=1,2,...,p_1}\frac{\vert\widehat{\gamma}_i - \gamma_i\vert}{\widehat{\tau}_i}\leq C^*_{1-\alpha}\right\}
\label{confRegion}
\end{equation}

\noindent 6.b (For hypothesis testing) Repeat steps 2 to 5 for $B$ times to generate $E^*_b,\ b=1,2,...,B$; then calculate the $1-\alpha$ sample quantile $C^*_{1-\alpha}$ of $E^*_b$. Reject the null hypothesis  $\gamma = \gamma_0$  when
\begin{equation}
	\max_{i = 1,2,...,p_1}\frac{\vert \widehat{\gamma}_i - \gamma_{0, i}\vert}{\widehat{\tau}_i} > C^*_{1-\alpha} .
\end{equation}
\label{BootALG1}
\end{algorithm}
\noindent
As in section \ref{prelimary}, if $X$ has rank $p \leq n$, we define $\widehat{\theta}_\perp = 0$, the $p$ dimensional vector with all elements $0$.

According to theorem 1.2.1 in Politis et al. \cite{subsampling}, the consistency of algorithm \ref{BootALG1} --either for asymptotic validity of confidence regions or
consistency of the hypothesis test-- is ensured if
\begin{equation}
Prob\left(\max_{i=1,2,...,p_1}\frac{\vert\widehat{\gamma}_i - \gamma_i\vert}{\widehat{\tau}_i}\leq c^*_{1-\alpha}\right)\to 1-\alpha
\label{EstSec}
\end{equation}
where $c^*_{1-\alpha}$ is the $1-\alpha$ quantile of the conditional distribution $Prob^*\left(\max_{i=1,2,...,p_1}\frac{\vert\widehat{\gamma}^*_i - \widehat{\gamma}_i\vert}{\widehat{\tau}_i^*}\leq x\right)$; we
  prove this in theorem \ref{BootCons} below.
\begin{theorem}
Suppose assumptions 1 to 7 hold true. Then
\begin{equation}
\sup_{x\geq 0}\vert Prob^*\left(\max_{i=1,2,...,p_1}\frac{\vert\widehat{\gamma}^*_i - \widehat{\gamma}_i\vert}{\widehat{\tau}_i^*}\leq x\right) - H(x)\vert = o_P(1) .
\label{EstFir}
\end{equation}
In addition, for any given $0< \alpha < 1$,
(\ref{EstSec}) holds true.
\label{BootCons}
\end{theorem}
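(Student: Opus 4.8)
The plan is to show that the ``bootstrap world'' is an exact replica of the sampling mechanism analyzed in Theorem \ref{Thm2}, with the estimator $\widehat{\theta}$ (respectively $\widehat{\gamma} = M\widehat{\theta}$) playing the role of the true $\theta$ (respectively $\gamma$), and then to transport the Gaussian approximation of Theorem \ref{Thm2} into the conditional (``starred'') probability. First I would record the bootstrap expansion. Since $XQ_\perp = 0$ gives $P^T X\widehat{\theta} = \Lambda Q^T\widehat{\theta}$, a direct computation from $y^* = X\widehat{\theta} + \epsilon^*$ and the definitions in step~3 of Algorithm \ref{BootALG1} yields
$$\widetilde{\theta}^* - \widehat{\theta} = -\rho_n^2 Q(\Lambda^2+\rho_n I_r)^{-2}Q^T\widehat{\theta} + Q\left((\Lambda^2+\rho_n I_r)^{-1}\Lambda + \rho_n(\Lambda^2+\rho_n I_r)^{-2}\Lambda\right)P^T\epsilon^*,$$
where the bias coefficient collapses to $-\rho_n^2/(\lambda_k^2+\rho_n)^2$ on the $k$-th coordinate exactly as in the derivation of (\ref{EXPAN}). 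Crucially, in forming $\widetilde{\theta}^*-\widehat\theta$ the term $\widehat{\theta}_\perp$ added back in step~3 \emph{cancels identically}, so the bootstrap carries no uncontrolled analogue of the ``unseen bias'' (\ref{UnseenBias}); this is the bootstrap counterpart of (\ref{EXPAN}) with $\zeta = Q^T\beta$ replaced by $Q^T\widehat{\theta}$ and the i.i.d.\ $\epsilon$ replaced by the \emph{exactly} Gaussian $\epsilon^*\sim N(0,\widehat{\sigma}^2)$, and $\widehat{\gamma} = M\widehat{\theta}$ is the bootstrap estimand.

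With this correspondence in hand I would mimic the proof of Theorem \ref{Thm2} step by step inside $Prob^*$, conditioning throughout on the high-probability event $\mathcal{E}_n$ supplied by Theorem \ref{thm1}, on which $\widehat{\mathcal{N}}_{b_n} = \mathcal{N}_{b_n}$, $|\widehat{\sigma}^2-\sigma^2| = O_p(n^{-\alpha_\sigma})$, and $\max_i|\widetilde{\theta}_i - \theta_i|$ is of the order the expansion provides. The first substantive step is a bootstrap model-selection consistency, $Prob^*(\widehat{\mathcal{N}}^*_{b_n} \neq \widehat{\mathcal{N}}_{b_n}) = o_P(1)$. On $\mathcal{E}_n$ the bootstrap ``truth'' satisfies $\widehat{\theta}_i = 0$ for $i \notin \mathcal{N}_{b_n}$ and $|\widehat{\theta}_i| = |\widetilde{\theta}_i| \geq b_n/c_b - \max_i|\widetilde{\theta}_i-\theta_i|$ for $i \in \mathcal{N}_{b_n}$ by Assumption~4; since Assumption~4 forces the subtracted quantity to be $o(b_n)$, a separation margin of exact order $b_n$ around the threshold survives. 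Repeating the tail argument of Theorem \ref{thm1}---now with the \emph{Gaussian} tails of $\epsilon^*$, which are sharper than the $m$-th moment bounds used for $\epsilon$---then controls the bootstrap misselection probability, so that with conditional probability $1-o_P(1)$ one has $\widehat{\tau}^*_i = \widehat{\tau}_i$ and $\sum_{j\in\widehat{\mathcal{N}}^*_{b_n}}m_{ij}q_{jk} = c_{ik}$.

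Next I would isolate the leading stochastic term: on this good event the expansion gives $\widehat{\gamma}^*_i - \widehat{\gamma}_i = \sum_{k=1}^r c_{ik}\left(\frac{\lambda_k}{\lambda_k^2+\rho_n} + \frac{\rho_n\lambda_k}{(\lambda_k^2+\rho_n)^2}\right)(P^T\epsilon^*)_k$ plus a deterministic bias governed solely by $-\rho_n^2 Q(\Lambda^2+\rho_n I_r)^{-2}Q^T\widehat{\theta}$, which is controlled exactly as its real-world counterpart via Assumptions~1--2 and $\Vert\widehat{\theta}\Vert_2 = O_p(n^{\alpha_\theta})$, hence negligible after normalization by $\widehat{\tau}_i \geq 1/\sqrt{n}$. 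Because $\epsilon^*$ is \emph{exactly} Gaussian, the conditional law of $\max_{i}\tau_i^{-1}|\sum_k c_{ik}(\cdots)(P^T\epsilon^*)_k|$ is \emph{precisely} the maximum of the $N(0,\widehat{\sigma}^2)$ Gaussians---here the bootstrap is in fact easier than the original problem, since no Berry--Esseen/Lindeberg passage is required. It then remains only to replace $\widehat{\sigma}^2$ by $\sigma^2$, which I would do through a Gaussian comparison inequality together with the anti-concentration bound for maxima of Gaussians (Chernozhukov et al. \cite{chernozhukov2013}); Assumption~7 (the ``no dominating coordinate'' condition, cf.\ Remark \ref{remark1}) and the lower bound $c_\mathcal{M} < \sum_k c_{ik}^2$ from Assumption~5 supply the required non-degeneracy, so $|\widehat{\sigma}^2 - \sigma^2| = o_P(1)$ delivers $\sup_{x\geq0}|Prob^*(\cdots \leq x) - H(x)| = o_P(1)$, i.e.\ (\ref{EstFir}).

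Finally, (\ref{EstSec}) follows by combining (\ref{EstFir}) with the unconditional convergence to $H$ in Theorem \ref{Thm2}: anti-concentration makes $H$ continuous and strictly increasing at $c_{1-\alpha}$, so the bootstrap quantile satisfies $c^*_{1-\alpha} \to_p c_{1-\alpha}$, and a Slutsky-type argument---precisely Theorem~1.2.1 of Politis et al. \cite{subsampling}---yields $Prob(\max_i |\widehat{\gamma}_i - \gamma_i|/\widehat{\tau}_i \leq c^*_{1-\alpha}) \to 1-\alpha$. I expect the main obstacle to be the bootstrap model-selection step: unlike the real world, where Assumption~4 hands over a clean margin $b_n/c_b$, the bootstrap ``truth'' $\widehat{\theta}$ sits only a \emph{random}, slightly eroded distance above the threshold, so the argument must certify that this eroded margin is still of exact order $b_n$ uniformly, and must do so on the conditional probability space while simultaneously carrying along the estimation error of $\widehat{\sigma}^2$.
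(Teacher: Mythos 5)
Your proposal is correct and follows essentially the same route as the paper's proof: the bootstrap expansion in which $\widehat{\theta}_\perp$ cancels identically, conditioning on the good event supplied by Theorem \ref{thm1}, bootstrap model-selection consistency via the order-$b_n$ margin that Assumption 4 preserves, exploitation of the exact Gaussianity of $\epsilon^*$ so that the conditional law of the leading term is $H(x\sigma/\widehat{\sigma})$ and only anti-concentration (Lemma \ref{CI_lemma_1}) is needed to replace $\widehat{\sigma}$ by $\sigma$ (no Lindeberg/comparison-lemma step), and the quantile-sandwich argument combining \eqref{EstFir} with Theorem \ref{Thm2} to obtain \eqref{EstSec}. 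The one technicality you pass over is that the anti-concentration bound degrades like $x\vert\sigma-\widehat{\sigma}\vert$, so the paper splits the supremum at $x = n^{\alpha_\sigma/2}$ and uses Gaussian tail bounds for larger $x$; this is precisely where the rate $\vert\widehat{\sigma}^2-\sigma^2\vert = O_p(n^{-\alpha_\sigma})$ that you carry on your event $\mathcal{E}_n$ — rather than mere consistency of $\widehat{\sigma}^2$ — is genuinely needed.
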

Theorem \ref{BootCons} has two implications. On the one hand, the confidence region introduced in
step 6.a of  algorithm \ref{BootALG1} is asymptotically valid, i.e., its
coverage tends to $1-\alpha$.
 On the other hand, consider the hypothesis test of step 6.b of  algorithm \ref{BootALG1};
Theorem \ref{BootCons} implies that,
 if the null hypothesis is true, them the
 probability for incorrectly rejecting the null hypothesis is asymptotically $\alpha$,
i.e., the test is consistent.

\section{Bootstrap interval prediction}
\label{BOOTPRD}
Given our data from the linear model $y = X\beta + \epsilon$, consider  a new $p_1\times p$ 
regressor matrix $X_f$, i.e., a collection of regressor (column) vectors that happen to be of   interest; as with $X$ itself, $X_f$ is  assumed given, i.e.,  deterministic.
The prediction  problem involves (a) finding a predictor for the {\it future} (still unobserved) vector $y_f = X_f\beta + \epsilon_f$, and (b) finding a $1-\alpha$ prediction region $A\subset R^{p_1}$ so that $Prob(y_f\in A) \to 1- \alpha$ as the (original) sample size $n\to\infty$. Here $\epsilon_f = (\epsilon_{f,1},...,\epsilon_{f,p_1})^T$ are i.i.d. errors with the same marginal distribution as $\epsilon_1$, and $\epsilon_f$ is independent with $\epsilon$. 

The  predictor of $y_f $ that is optimal with respect to total mean squared error is $X_f\beta$; since $\beta$ is unknown, we can estimate  it by
 $\widehat{\theta}$ as in \eqref{HAT}, yielding the practical predictor  $X_f\widehat{\theta}$. However, finding a $1-\alpha$ prediction region  for  $y_f $ is more challenging.
We adopt  definition 2.4.1  of Politis \cite{model_free}, and define a consistent prediction region  in terms of conditional coverage as follows.
\begin{definition}[Consistent prediction region]
 A set $\Gamma= \Gamma(X, y, X_f)$ is called a $1-\alpha$ consistent prediction region for the future observation $y_f = X_f\beta + \epsilon_f$ if
\begin{equation}
Prob\left(y_f\in \Gamma|y\right)\to_p 1-\alpha\ \ \text{as   }n\to\infty .
\label{eq.DEF_PRD}
\end{equation}
\label{DEF_PRD}
\end{definition}
\noindent Note that the convergence (\ref{DEF_PRD}) is ``in probability"
since $Prob\left(y_f\in \Gamma|y\right)$ is a function of $y$, and therefore random;
see also Lei and Wasserman \cite{10.2307/24772746} for more on the notion of conditional validity.

Other authors, including
Stine \cite{doi:10.1080/01621459.1985.10478220},
Romano et al. \cite{NEURIPS2019_5103c358}, and Chernozhukov et al. \cite{chernozhukov2019distributional},
considered another definition of prediction interval consistency focusing on unconditional coverage,
 i.e., insisting that
\begin{equation}
Prob(y_f\in\Gamma)\to 1-\alpha .
\label{eq.DEF_PRD2}
\end{equation}

However, the conditional coverage of definition \ref{DEF_PRD} is a stronger
property. To see why, define the random variables $U_n=Prob\left(y_f\in \Gamma|y\right)$, noting
that $y$ has dimension $n$. Then, the boundedness of $U_n$  can be invoked
to show that if $U_n\to_p  1-\alpha$, then   $\mathbf{E} U_n\to  1-\alpha$ as well.
Hence, (\ref{eq.DEF_PRD}) implies  (\ref{eq.DEF_PRD2}); see
  Zhang and Politis \cite{zhang2021bootstrap} for a further discussion
on conditional vs.~unconditional coverage.

Consider the prediction error $y_f - X_f\widehat{\theta} = \epsilon_f - X_f(\widehat{\theta} - \beta)$.
If we can put  bounds on the prediction error that are valid with conditional probability
$1-\alpha$ (asymptotically), then a consistent prediction region ensues.
Note that the prediction error has two parts: $\epsilon_f $ and $- X_f(\widehat{\theta} - \beta)$.
Although the latter may be asymptotically negligible, it is important in practice to not
approximate it by zero   as it would yield   finite-sample undercoverage; see e.g.
Ch. 3 of   Politis \cite{model_free} for an extensive discussion.

 Theorem \ref{Thm2} indicates that the asymptotically negligible  estimation error can be approximated by normal random variables. On the other hand, the non-negligible error $\epsilon_f $ may not have a normal distribution;
 so in order to approximate the distribution of $\epsilon_f - X_f(\widehat{\theta} - \beta)$,
we need to estimate the errors' marginal distribution as well.

This section requires some additional assumptions.

\textbf{Additional assumptions}

8. The cumulative distribution function of errors $F(x) = Prob\left(\epsilon_1\leq x\right)$ is continuous

9. The number of regressors of interest 
is bounded, i.e., $p_1 = O(1)$
\\

\noindent
Since $F$ is increasing and bounded, if $F(x)$ is continuous, then $F$ is uniformly continuous on $\mathbf{R}$. this property is useful in the proof of lemma \ref{LEMMARES}.

\begin{lemma}
Suppose  assumption 1 to 6 and 8 hold true. Define the   
residuals $\widehat{\epsilon}^{'} = (\widehat{\epsilon}_1^{'},...,\widehat{\epsilon}_n^{'})^T = y - X\widehat{\theta}$, as well as the centered residuals $\widehat{\epsilon} = (\widehat{\epsilon}_1,...,\widehat{\epsilon}_n)^T$ with $\widehat{\epsilon}_i = \widehat{\epsilon}_i^{'} - \frac{1}{n}\sum_{i=1}^n\widehat{\epsilon}_i^{'}$.
If we let $\widehat{F}(x) = \frac{1}{n}\sum_{i = 1}^n\mathbf{1}_{\widehat{\epsilon}_i\leq x}$, then
\begin{equation}
\sup_{x\in\mathbf{R}}\vert\widehat{F}(x) - F(x)\vert\to_p 0\ \text{as $n\to\infty$. }\
\label{PrdEq}
\end{equation}
\label{LEMMARES}
\end{lemma}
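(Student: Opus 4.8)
The plan is to compare $\widehat F$, the empirical distribution of the centered fitted residuals, with the empirical distribution $F_\epsilon(x)=\frac1n\sum_{i=1}^n\mathbf{1}_{\epsilon_i\le x}$ of the \emph{true} (but unobservable) errors. The latter obeys the classical Glivenko--Cantelli theorem, $\sup_{x\in\mathbf{R}}\vert F_\epsilon(x)-F(x)\vert\to 0$ almost surely, because the $\epsilon_i$ are i.i.d.\ by assumption 3. Since the thin SVD gives $XQ_\perp=0$ and hence $X\beta=X(\theta+\theta_\perp)=X\theta$, the raw residuals satisfy $\widehat\epsilon_i^{'}=\epsilon_i-(X(\widehat\theta-\theta))_i$, so that after centering
\begin{equation}
\widehat\epsilon_i=\epsilon_i-\Delta_i,\qquad \Delta_i=(X(\widehat\theta-\theta))_i+\Big(\bar\epsilon-\bar d\Big),
\end{equation}
where $\bar\epsilon=\frac1n\sum_i\epsilon_i$ and $\bar d=\frac1n\sum_i(X(\widehat\theta-\theta))_i$. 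Thus $\widehat F$ is the empirical distribution of an additive perturbation of the $\epsilon_i$, and everything reduces to showing that this perturbation is negligible in a suitable averaged sense.

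Fix $\delta>0$ and split the indices into the ``good'' set $G=\{i:\vert\Delta_i\vert\le\delta\}$ and its complement $B$. Writing $\widehat F(x)=\frac1n\sum_i\mathbf{1}_{\epsilon_i\le x+\Delta_i}$, using $\vert\Delta_i\vert\le\delta$ on $G$ and the trivial bound $0\le\frac1n\sum_{i\in B}\mathbf{1}_{(\cdot)}\le\vert B\vert/n$, one gets the sandwich $F_\epsilon(x-\delta)-\vert B\vert/n\le\widehat F(x)\le F_\epsilon(x+\delta)+\vert B\vert/n$ for every $x$. Combining this with Glivenko--Cantelli and with the uniform continuity of $F$ (which holds by assumption 8 together with the boundedness of $F$, as noted just before the lemma), and writing $\omega_F(\delta)=\sup_{\vert s-t\vert\le\delta}\vert F(s)-F(t)\vert$, we obtain
\begin{equation}
\sup_{x\in\mathbf{R}}\vert\widehat F(x)-F(x)\vert\le\sup_{y\in\mathbf{R}}\vert F_\epsilon(y)-F(y)\vert+\omega_F(\delta)+\frac{\vert B\vert}{n}.
\end{equation}
The first term vanishes almost surely, so the proof hinges on showing $\vert B\vert/n\to_p0$ for each fixed $\delta$, after which sending $\delta\to0$ (so that $\omega_F(\delta)\to0$) finishes the argument.

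The key estimate, and the main obstacle, is therefore that $\frac{\vert B\vert}{n}=\frac1n\#\{i:\vert\Delta_i\vert>\delta\}\to_p0$. By Chebyshev's inequality $\vert B\vert/n\le\delta^{-2}\cdot\frac1n\sum_i\Delta_i^2$, and since $\bar\epsilon=O_p(n^{-1/2})$ and $\vert\bar d\vert\le\big(\frac1n\Vert X(\widehat\theta-\theta)\Vert_2^2\big)^{1/2}$ by Cauchy--Schwarz, this reduces to the single bound
\begin{equation}
V:=\frac1n\Vert X(\widehat\theta-\theta)\Vert_2^2=o_p(1).
\end{equation}
It is essential that one does \emph{not} attempt to prove the uniform statement $\max_i\vert\Delta_i\vert\to_p0$: in the high-dimensional regime the sup-norm of $X(\widehat\theta-\theta)$ need not vanish, which is exactly why the argument must be run through the averaged quantity $V$ and the vanishing \emph{fraction} $\vert B\vert/n$ rather than a uniform bound on the $\Delta_i$.

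Finally, to establish $V=o_p(1)$ I would work on the selection-consistent event $\{\widehat{\mathcal N}_{b_n}=\mathcal N_{b_n}\}$, whose probability tends to one by the first part of Theorem \ref{THM_CON} (assumption 4 forces $\alpha_p+m\nu_b-m\eta<0$). On this event $(\widehat\theta-\theta)_i=\widetilde\theta_i-\theta_i$ for $i\in\mathcal N_{b_n}$ and $(\widehat\theta-\theta)_i=-\theta_i$ for $i\notin\mathcal N_{b_n}$, so that, using $\Vert Xu\Vert_2\le\lambda_1\Vert u\Vert_2$ with $\lambda_1\le C_\lambda n^{1/2}$,
\begin{equation}
V\le 2C_\lambda^2\Big(\sum_{i\in\mathcal N_{b_n}}(\widetilde\theta_i-\theta_i)^2+\sum_{i\notin\mathcal N_{b_n}}\theta_i^2\Big).
\end{equation}
The dropped-coordinate term is immediately controlled by $\sum_{i\notin\mathcal N_{b_n}}\theta_i^2\le\big(\max_{i\notin\mathcal N_{b_n}}\vert\theta_i\vert\big)\sum_{i\notin\mathcal N_{b_n}}\vert\theta_i\vert\le c_b b_n\sum_{i\notin\mathcal N_{b_n}}\vert\theta_i\vert=O(n^{-\alpha_\sigma})$ by assumptions 4 and 6. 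The retained estimation-error term $\sum_{i\in\mathcal N_{b_n}}(\widetilde\theta_i-\theta_i)^2$ is handled by the expansion \eqref{EXPAN}: its bias part is uniformly small because $\rho_n/\lambda_k^2=O(n^{-\delta})$ and $\Vert\theta\Vert_2=O(n^{\alpha_\theta})$, while its stochastic part is controlled through the $m$-th moment bound of assumption 3, the smallness of $\vert\mathcal N_{b_n}\vert$ furnished by assumption 6, and the gap $\lambda_k\ge c_\lambda n^{\eta}$. This is precisely the sum-of-squares quantity already bounded in the proof of the second part of Theorem \ref{THM_CON}, and invoking that analysis gives $V=o_p(1)$. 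Sending $\delta\to0$ in the displayed bound then completes the proof.
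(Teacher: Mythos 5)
Your proposal is correct, and its overall skeleton matches the paper's: both compare $\widehat F$ with the empirical distribution $\widetilde F(x)=\frac1n\sum_{i=1}^n\mathbf{1}_{\epsilon_i\le x}$ of the true errors, invoke Glivenko--Cantelli and the uniform continuity of $F$ from assumption 8, work on the selection event $\{\widehat{\mathcal N}_{b_n}=\mathcal N_{b_n}\}$ whose probability tends to one, and reduce everything to the same quantitative input, namely that the averaged squared perturbation $\frac1n\sum_{i=1}^n(\widehat\epsilon_i-\epsilon_i)^2$ is $o_p(1)$ --- which the paper proves in \eqref{DTS} by recycling exactly the bounds \eqref{SIGMA}, \eqref{SIGMA2} and \eqref{XTHETA} from the proof of theorem \ref{THM_CON} that you invoke. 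The genuine difference is the device that converts this $L^2$-averaged smallness into sup-norm closeness of the empirical distribution functions. The paper reuses the smoothed indicators $g_{\psi,x}$ of \eqref{notion}--\eqref{prop} (already built for lemma \ref{thm_1}): the sandwich $\mathbf{1}_{y\le x}\le g_{\psi,x}(y)\le\mathbf{1}_{y\le x+\psi^{-1}}$, the Lipschitz constant $g_*\psi$, and Cauchy--Schwarz give $\sup_x\vert\widehat F(x)-F(x)\vert\le g_*\psi\sqrt{\frac1n\sum_i(\widehat\epsilon_i-\epsilon_i)^2}+\sup_x\vert\widetilde F(x)-F(x)\vert+\sup_x\left(F(x+\psi^{-1})-F(x)\right)$. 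You instead split the indices into those with $\vert\Delta_i\vert\le\delta$ and the bad set $B$, bound the bad fraction by Markov's inequality, $\vert B\vert/n\le\delta^{-2}\cdot\frac1n\sum_i\Delta_i^2$, and absorb the good indices into the modulus of continuity $\omega_F(\delta)$; the order of limits ($n\to\infty$ first, then $\delta\to0$) is handled correctly. Your route is more elementary and self-contained, requiring no smoothing machinery; the paper's route costs nothing extra in context because the $g_{\psi,x}$ functions are already in place for the Gaussian-approximation arguments. Since the heart of the matter --- the $o_p(1)$ bound on $\frac1n\Vert X(\widehat\theta-\theta)\Vert_2^2$ via selection consistency, assumptions 4 and 6, and the expansion \eqref{EXPAN} --- is identical in both proofs, your argument is a valid substitute for the paper's.
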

\noindent   We   emphasize that the dimension of parameters $p$ in lemma \ref{LEMMARES} can grow to infinity as long as assumption 4 is satisfied.
Furthermore,   the validity of lemma \ref{LEMMARES}
--as well as that of theorem \ref{THMPD} that follows-- does not require assumption 7.

We will resample the centered residuals $\widehat{\epsilon}_i, i = 1,2,...,n$ (in other words, generate random variables with distribution $\widehat{F}$) in algorithm \ref{BootPrd}. Lemma \ref{LEMMARES}
will ensure  that the centered residuals can capture the
distribution of the non-negligible errors.

For a high dimensional linear model, lemma \ref{LEMMARES} is not an obvious result; see Mammen \cite{10.1214/aos/1033066211} for a  detailed explanation. 
Lemma \ref{LEMMARES} is the foundation for a   new   resampling procedure as
follows; this is a    hybrid bootstrap as it combines the   residual-based bootstrap to replicate the new error
$\epsilon_f $ with the normal approximation to the estimation error $- X_f(\widehat{\theta} - \beta)$.

\begin{algorithm} [Hybrid bootstrap for prediction region]
\textbf{Input: } Design matrix $X$, dependent variables $y = X\beta + \epsilon$, a new $p_1\times p$ linear combination matrix $X_f$, ridge parameter $\rho_n$, threshold $b_n$, nominal coverage probability $0<1-\alpha<1$, the number of bootstrap replicates $B$

1. Calculate $\widehat{\theta}$ defined in \eqref{HAT}, $\widehat{\sigma}$ defined in \eqref{defSigma}, $\widehat{\epsilon}$ defined in lemma \ref{LEMMARES}, $\widehat{y}_f = (\widehat{y}_{f,1},...,\widehat{y}_{f,p_1})^T= X_f\widehat{\theta}$, and $\widehat{\theta}_{\perp} = Q_\perp Q_\perp^T\widehat{\theta}$.

2. Generate i.i.d. errors $\epsilon^* = (\epsilon^*_1,...,\epsilon^*_n)^T$ with $\epsilon^*_i, i = 1,...,n$ having normal distribution with mean $0$ and variance $\widehat{\sigma}^2$. Then generate i.i.d. errors $\epsilon^*_f = (\epsilon^*_{f,1},...,\epsilon^*_{f,p_1})^T$ with $\epsilon_{f,i}, i = 1,...,p_1$ having cumulative distribution function $\widehat{F}$ defined in lemma \ref{LEMMARES}. Calculate $y^* = X\widehat{\theta} + \epsilon^*$.

3. Calculate $\widetilde{\theta}^{\star*} = (X^TX + \rho_n I_p)^{-1}X^Ty^*$ and $\widetilde{\theta}^* = \widetilde{\theta}^{\star *} + \rho_n\times Q(\Lambda^2 + \rho_nI_r)^{-1}Q^T\widetilde{\theta}^{\star*} + \widehat{\theta}_{\perp}$. Then derive $\widehat{\mathcal{N}}_{b_n}^* = \left\{i\Big|\vert\widetilde{\theta}_i^*\vert > b_n\right\}$, $\widehat{\theta}^* = (\widehat{\theta}_1^*,...,\widehat{\theta}_p^*)^T$ with $\widehat{\theta}_i^* = \widetilde{\theta}_i^*\times \mathbf{1}_{i\in\widehat{\mathcal{N}}^*_{b_n}}$ for $i=1,2,...,p$.

4. Calculate $y^*_f = (y^*_{f,1},...,y^*_{f,p_1})^T= X_f\widehat{\theta} + \epsilon^*_f$ and $\widehat{y}^*_f = (\widehat{y}^*_{f,1},...,\widehat{y}^*_{f,p_1})^T = X_f\widehat{\theta}^*$. Define $E^*_b = \max_{i=1,2,...,p_1}\vert y^*_{f,i} - \widehat{y}^*_{f,i}\vert$.

5. Repeat steps 2 to 4 for $B$ times, and generate $E^*_b,\ b=1,2,...,B$. Calculate the $1-\alpha$ sample quantile $C^*_{1-\alpha}$ of $E^*_b$. Then, the $1-\alpha$ prediction region for $y_f = X_f\beta + \epsilon_f$ is given by
\begin{equation}
\left\{y_f = (y_{f,1},...,y_{f,p_1})^T\Big|\max_{i=1,2,...,p_1}\vert y_{f,i} - \widehat{y}_{f,i}\vert\leq C^*_{1-\alpha}\right\} .
\label{prdRegion}
\end{equation}
\label{BootPrd}
\end{algorithm}

\noindent   If the design matrix $X$ has rank $p$, then $\widehat{\theta}_\perp$ is defined to be $0$.

Similar to section \ref{BOOT}, here we define $c^*_{1-\alpha}$ as the $1-\alpha$ quantile of the conditional distribution $Prob^*\left(\max_{i = 1,...,p_1}\vert y^*_{f,i} - \widehat{y}^*_{f,i}\vert\leq x\right)$, which can be approximated by $C^*_{1-\alpha}$ by letting $B\to\infty$.
Theorem \ref{THMPD} below proves $Prob\left(\max_{i=1,2,...,p_1}\vert y_{f,i} - \widehat{y}_{f,i}\vert\leq c^*_{1-\alpha}\right)\to 1-\alpha$ as the sample size $n\to\infty$, which justifies the consistency of the prediction region
(\ref{prdRegion}).  

\begin{theorem}
Suppose assumptions 1 to 6 and 8 to 9 hold true(here consider $M = (m_{ij})_{i = 1,...,p_1, j = 1,...,p}$ in assumption 5 as $X_f$). Then
\begin{equation}
\sup_{x\geq 0}\vert Prob^*\left(\max_{i=1,2,...,p_1}\vert y^*_{f,i} - \widehat{y}^*_{f,i}\vert\leq x\right) - Prob^*\left(\max_{i=1,2,...,p_1}\vert y_{f,i} - \widehat{y}_{f,i}\vert\leq x\right)\vert = o_p(1) .
\label{PDFIR}
\end{equation}
For any fixed $0< \alpha < 1$, it follows that
\begin{equation}
Prob^*\left(\max_{i=1,2,...,p_1}\vert y_{f,i} - \widehat{y}_{f,i}\vert\leq c^*_{1-\alpha}\right)\to_p 1-\alpha\ \text{as }n\to\infty.
\label{PDFCI}
\end{equation}

\label{THMPD}
\end{theorem}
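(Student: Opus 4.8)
\section*{Proof proposal for Theorem \ref{THMPD}}

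The plan is to write each prediction error exactly as a ``new-error'' part plus a vanishing ``estimation-error'' part, and then to match the two new-error parts through Lemma \ref{LEMMARES}. Setting $R_i=(X_f(\widehat{\theta}-\beta))_i$ and $R_i^*=(X_f(\widehat{\theta}^*-\widehat{\theta}))_i$, the definitions in algorithm \ref{BootPrd} give the exact identities $y_{f,i}-\widehat{y}_{f,i}=\epsilon_{f,i}-R_i$ and $y^*_{f,i}-\widehat{y}^*_{f,i}=\epsilon^*_{f,i}-R_i^*$. Under $Prob^*(\cdot)$ the shift $R_i$ is a fixed, data-dependent constant and only $\epsilon_{f,i}\sim F$ is random; in the bootstrap world both $\epsilon^*_{f,i}\sim\widehat{F}$ and $R_i^*$ are random. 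The entire argument rests on the fact that, because assumption 9 forces $p_1=O(1)$ and hence $\vert\mathcal{M}\vert\leq p_1=O(1)$, both shifts are asymptotically negligible, while the continuity of $F$ from assumption 8 makes the limiting law continuous, so that neither (small) shift perturbs the relevant cumulative distribution function.

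First I would verify that the two estimation errors vanish. For the real-world shift, treating $X_f$ as the matrix $M$ in assumption 5 (as the statement permits), Theorem \ref{THM_CON} gives $\max_i\vert R_i\vert=\max_i\vert\widehat{\gamma}_i-\gamma_i\vert=O_p(\vert\mathcal{M}\vert^{1/m}n^{-\eta})=O_p(n^{-\eta})=o_p(1)$. For the bootstrap shift I would prove a bootstrap counterpart of the consistency part of Theorem \ref{THM_CON}: conditionally on $y$, the resampled data $y^*=X\widehat{\theta}+\epsilon^*$ obey the same linear model with $\widehat{\theta}$ in the role of the truth and i.i.d.\ Gaussian errors of variance $\widehat{\sigma}^2\to_p\sigma^2$, so the expansion (\ref{EXPAN}) and the thresholding analysis --- both of which use only assumptions 1--6 --- carry over with $\widehat{\theta}$ in place of $\beta$ and show that $\max_i\vert R_i^*\vert$ converges to $0$ in bootstrap probability $Prob^*$, with $P$-probability tending to one. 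This is the step I expect to be the main obstacle: one must control the random selection event $\widehat{\mathcal{N}}^*_{b_n}$, show it coincides with $\widehat{\mathcal{N}}_{b_n}$ with high conditional probability, and bound the bootstrap bias and stochastic terms uniformly over the index set. This mirrors Theorem \ref{BootCons} but is in fact easier, since here we need only negligibility rather than a full Gaussian approximation, which is precisely why assumption 7 can be dropped.

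Next I would match the new-error laws. Let $G_{p_1}(x)=Prob(\max_{i\leq p_1}\vert\epsilon_{f,i}\vert\leq x)$, which is continuous under assumption 8; since $p_1=O(1)$ takes only finitely many values, it suffices to argue along subsequences on which $p_1$ is constant. Because the $\epsilon^*_{f,i}$ are i.i.d.\ $\widehat{F}$ and Lemma \ref{LEMMARES} gives $\sup_x\vert\widehat{F}(x)-F(x)\vert\to_p0$, the conditional law of $\max_i\vert\epsilon^*_{f,i}\vert$ converges uniformly in $x$ to $G_{p_1}$; combining this with $\max_i\vert R_i^*\vert=o_{P^*}(1)$ and a P\'olya-type argument (uniform convergence of a CDF to a continuous limit is preserved under an additive $o_{P^*}(1)$ perturbation), the bootstrap CDF $Prob^*(\max_i\vert y^*_{f,i}-\widehat{y}^*_{f,i}\vert\leq x)$ converges, uniformly in $x$ and in $P$-probability, to $G_{p_1}(x)$. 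The same reasoning --- now with the fixed shift $\max_i\vert R_i\vert=o_p(1)$ and $\epsilon_{f,i}\sim F$ --- shows that $Prob^*(\max_i\vert y_{f,i}-\widehat{y}_{f,i}\vert\leq x)$ also converges uniformly to the same $G_{p_1}(x)$. Subtracting the two limits delivers (\ref{PDFIR}).

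Finally, (\ref{PDFCI}) follows by a standard quantile argument. By definition of the $1-\alpha$ quantile and the uniform convergence of the bootstrap CDF to the continuous limit $G_{p_1}$ (which forbids a jump at the target level), $Prob^*(\max_i\vert y^*_{f,i}-\widehat{y}^*_{f,i}\vert\leq c^*_{1-\alpha})\to_p1-\alpha$. Invoking (\ref{PDFIR}) to replace the bootstrap errors by the genuine prediction errors changes this probability by at most $\sup_{x\geq0}\vert\cdots\vert=o_p(1)$, so $Prob^*(\max_i\vert y_{f,i}-\widehat{y}_{f,i}\vert\leq c^*_{1-\alpha})\to_p1-\alpha$ as well, which is exactly (\ref{PDFCI}). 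Continuity of $F$ is precisely what guarantees the limiting law is continuous, closing the argument.
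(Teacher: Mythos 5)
Your proposal is correct and follows essentially the same route as the paper's own proof: the same decomposition into new error plus estimation error, negligibility of $X_f(\widehat{\theta}-\beta)$ via Theorem \ref{THM_CON} and of $X_f(\widehat{\theta}^*-\widehat{\theta})$ via bootstrap selection consistency (the paper reuses the bound \eqref{StarVar} from the proof of Theorem \ref{BootCons}, exactly the step you flag as the main obstacle), uniform convergence of both conditional CDFs to the common continuous limit $G(x)=(F(x)-F^{-}(-x))^{p_1}$ using Lemma \ref{LEMMARES} and uniform continuity of $F$, and finally the same quantile argument for \eqref{PDFCI}. Your observation that assumption 7 can be dropped because only negligibility, not a Gaussian approximation, is needed also matches the paper's reasoning.
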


\noindent Note that the   bootstrap probability  $Prob^*(\cdot)$  is  probability conditional on
the data $y$, thus justifying the notion of conditional validity of our definition \ref{DEF_PRD}.

 A version of the algorithm \ref{BootPrd} can be constructed where the residual-based bootstrap part is
conducted by resampling from the empirical distribution of the (centered)  predictive, i.e., leave-one-out,
residuals instead of the fitted residuals $\widehat{\epsilon}_i$; see
Ch. 3 of   Politis \cite{model_free} for a discussion.

\section{Numerical Simulations}
\label{EXPER}
\label{Numerical}
We define $k_n = \sqrt{n\log(n)}$ and the following four terms
\begin{equation}
\begin{aligned}
\mathcal{K}_1 = \max_{i=1,2,...,p_1}k_n\vert\sum_{j\not\in\mathcal{N}_{b_n}}m_{ij}\theta_j\vert,\ \ \ \mathcal{K}_2 = \max_{i=1,2,...,p_1}k_n\vert\sum_{j=1}^rm_{ij}\theta_{\perp,j}\vert, \ \ \
\mathcal{K}_3 = b_n\sum_{j\not\in\mathcal{N}_{b_n}}\vert\theta_j\vert,\ \ \ \mathcal{K}_4 = \frac{\sqrt{\vert\mathcal{N}_{b_n}\vert}}{\lambda_r} ;
\end{aligned}
\end{equation}
see section \ref{prelimary} for the meaning of notations in the above. Assumptions 5 and 6 imply that these terms  converge to 0 as the sample size $n\to\infty$. Indeed, if one of the  $\mathcal{K}_i$ is large, the debiased and threshold ridge regression estimator may have a large bias, which affects the performance of the bootstrap algorithms.

 In this section, we generate the design matrix $X$, the linear combination matrix $M$, and the parameters $\beta$ through the following strategies:
\\
\noindent \textbf{Design matrix $X$: } define $X = (x_1^T,...,x^T_n)^T$ with $x_i = (x_{i1},...,x_{ip})^T\in\mathbf{R}^p, i = 1,...,n$. Generate   $x_1, x_2, \ldots$ as
i.i.d.~normal 
random vectors with mean $0$ and covariance matrix $\Sigma\in\mathbf{R}^{p\times p}$.
We choose $\Sigma$ with diagonal  elements equal to $2.0$ and off-diagonal equal to $0.5$.

\begin{equation}
M = \left[
\begin{matrix}
m_{11} & m_{12} & ... & m_{1\tau} & m_{1\tau+1} & ... & m_{1p}\\
m_{21} & m_{22} & ... & m_{2\tau} & m_{2\tau+1} & ... & m_{2p}\\
\vdots & \vdots & ... & \vdots    & \vdots      & ... & \vdots\\
m_{\vert\mathcal{M}\vert1} & m_{\vert\mathcal{M}\vert2} & ... & m_{\vert\mathcal{M}\vert\tau} & m_{\vert\mathcal{M}\vert\tau+1} & ... & m_{\vert\mathcal{M}\vert p}\\
0 & 0 & ... & 0 &  m_{\vert\mathcal{M}\vert+1\tau+1} & ... & m_{\vert\mathcal{M}\vert+1 p}\\
\vdots & \vdots & ... & \vdots    & \vdots      & ... & \vdots\\
0 & 0 & ... & 0 &  m_{p_1\tau+1} & ... & m_{p_1 p}
\end{matrix}
\right]
\label{defM}
\end{equation}

\noindent \textbf{$M$ and $\beta$ when $p<n$: } choose $\tau = 50$ in \eqref{defM}. Generate $m_{ij}^{'}, i = 1,2,..., \vert\mathcal{M}\vert,\ j = 1,2,...,\tau$
as i.i.d.~normal 
with mean $0.5$ and variance $1.0$, and generate $m_{ij}^{'}, i = 1,2,..., p_1, j = \tau + 1,...,p$ as i.i.d.~normal  
with mean $1.0$ and variance $4.0$. Use $m_{ij} = 2.0\times m_{ij}^{'}/\sqrt{\sum_{j = 1}^\tau m^{'2}_{ij}}$ for $i = 1,2,..., \vert\mathcal{M}\vert,\ j = 1,2,...,\tau$; $m_{ij} = 4.0\times m^{'}_{ij} / \sqrt{\sum_{j = \tau+1}^p m^{'2}_{ij}}$ for $i = 1,2,...,\vert\mathcal{M}\vert,\ j = \tau+1,...,p$; and $m_{ij} = 6.0\times m^{'}_{ij} / \sqrt{\sum_{j = \tau+1}^p m^{'2}_{ij}}$ for $i = \vert\mathcal{M}\vert + 1,...,p_1, j = \tau + 1,...,p$. Choose $\beta = (\beta_1,...,\beta_p)^T$ with $\beta_i = 2.0, i = 1, 2, 3$, $\beta_i = -2.0, i = 4,5,6$, $\beta_i = 1.0, i = 7,8,9$, $\beta_i = -1.0, i = 10, 11, 12$, $\beta_i = 0.01, i = 13, 14, 15, 16$, and $0$ otherwise.

\noindent \textbf{$M$ and $\beta$ when $p>n$: } choose $\tau = 6$ in \eqref{defM}. Generate $m_{ij}^{'}, i = 1,2,...,\vert\mathcal{M}\vert,\ j = 1,2,...,\tau$ as i.i.d.~normal 
with mean $0.5$ and variance $1.0$, and generate $m_{ij}^{'}, i = 1,2,...,p_1,\ j = \tau+1,...,p$  as i.i.d.~normal  
with mean $1.0$ and variance $4.0$. Use $m_{ij} = 2.0 \times m_{ij}^{'} / \sqrt{\sum_{j = 1}^\tau m^{'2}_{ij}}$ for $i = 1,2,...,\vert\mathcal{M}\vert,\ j = 1,2,...,\tau$; and $m_{ij} = m^{'}_{ij} / \sqrt{\sum_{j = \tau+1}^p m^{'2}_{ij}}$ for $i = 1,2,...,p_1,\ j = \tau+1,...,p$. Choose $\beta_i = 1.0, i = 1,2,3$, $\beta_i = -1.0, i = 4,5,6$, and $0$ otherwise. When $p > n$, $\beta$ may not be identifiable \cite{shao2012}, and $\beta$ may not equal $\theta$(defined in section \ref{prelimary}) despite $X\beta = X\theta$. We consider both situations and evaluate the performance of proposed methods on the linear model $y = X\beta+\epsilon$ and $y = X\theta+\epsilon$. We fix $X$ and $M$ in each simulation.

The different regression algorithms considered are the debiased and threshold ridge regression(Deb Thr), the ridge regression, Lasso, threshold ridge regression (Thr Ridge), threshold Lasso (Thr Lasso), and the post-selection algorithms, i.e., Lasso + OLS (Post OLS), and Lasso + Ridge (Post Ridge).
We consider 6 cases for simulation involving a different $p/n$ ratio, and Normal vs.~Laplace (2-sided
exponential) errors;
we present detailed information about each simulation case in table \ref{INFOFD}, compare the performance of different regression algorithms in figure \ref{Figure1}, and record the performance of bootstrap algorithms on estimation/hypothesis testing and interval-prediction in table \ref{TAB1}. The optimal ridge parameter $\rho_n$ and threshold $b_n$ are chosen by 5-fold cross validation. To adapt to assumption 9, we choose $X_f$ as the first 100 lines of $M$ for prediction.

\begin{table}[htbp]
  \centering
  \caption{Information about $X$, $M$ and $\epsilon$ in each simulation case. For the normal distribution we choose variance $4$, for the Laplace distribution we choose the scale $\sqrt{2}$. By doing this, the variance of residuals is $4$. When $p > n$, $\beta\neq \theta$. \textbf{The left(right) side of the slashes represent $\mathcal{K}_2$ calculated by the linear model $y = X\beta + \epsilon$($y = X\theta + \epsilon$).} The difference between $\beta$ and $\theta$ does not change other terms in case 5 and 6.}
  \begin{tabular}{l l l l l l l l l l l l l}
  \hline\hline
  Case            & $n$    & $p$       & Residual      & $p_1$ & $\vert\mathcal{M}\vert$ & $\lambda_r$ & $\rho_n$ & $b_n$  & $\mathcal{K}_1$ & $\mathcal{K}_2$ & $\mathcal{K}_3$ & $\mathcal{K}_4$\\
  1               & 1000   & 500       & Normal        & 800   &  300                    & 12.978      & 56.453   & 0.343  & 1.370           & 0.0           & 0.013 & 1.712\\
  2               & 1000   & 500       & Laplace       & 800   &  300                    & 12.561      & 36.728   & 0.354  & 1.636           & 0.0           & 0.014 & 1.769\\
  3               & 1000   & 650       & Laplace       & 800   &  300                    & 8.226       & 56.432   & 0.396  & 1.553           & 0.0 & 0.016 & 3.085\\
  4               & 1000   & 500       & Laplace       & 800   &  700                    & 12.847      & 55.317   & 0.346  & 1.510           & 0.0 & 0.014 & 1.730\\

  5               & 1000   & 1500      & Normal        & 800   &  300                    & 9.766       & 1.201    & 0.228  & 6.938           & 129 / 0.0 &8.214 & 3.962\\
  6               & 1000   & 1500      & Laplace       & 800   &  300                    & 9.766       & 1.201    & 0.228  & 6.938           & 129 / 0.0 & 8.214 & 3.962\\
  \hline\hline
  \end{tabular}
  \label{INFOFD}
\end{table}
\noindent Case 5 and 6 consider both the linear model $y = X\beta + \epsilon$ and $y = X\theta + \epsilon$, here $\beta \neq \theta = QQ^T\beta$. The difference in $\beta$ and $\theta$ affects the value of $\mathcal{K}_2$(but does not affect others), so we have two values in table \ref{INFOFD}.

\label{EXP1}

\begin{figure}[htbp]
    \flushleft
    \subfigure[Case 1]{
    \includegraphics[width = 2.0in]{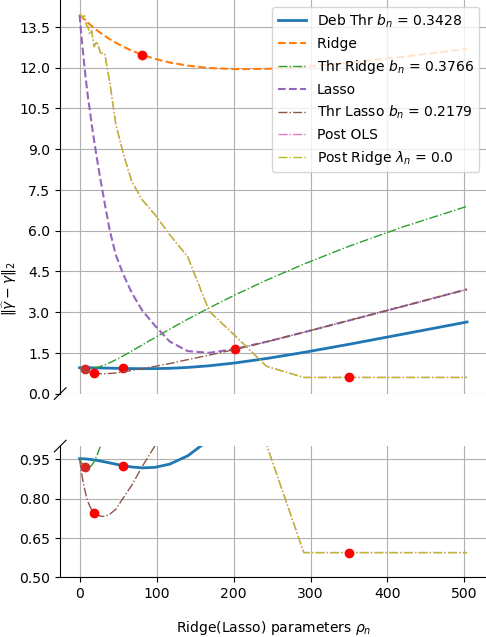}
    \label{1case}
  }
  \subfigure[Case 2]{
    \includegraphics[width = 2.0in]{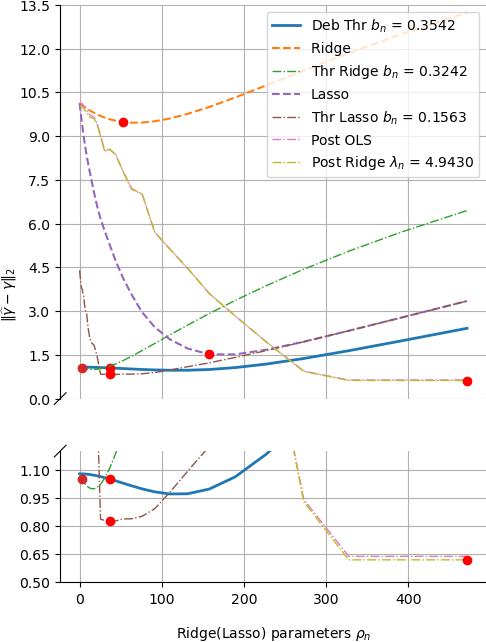}
    \label{2case}
  }
  \subfigure[Case 3]{
    \includegraphics[width = 2.0in]{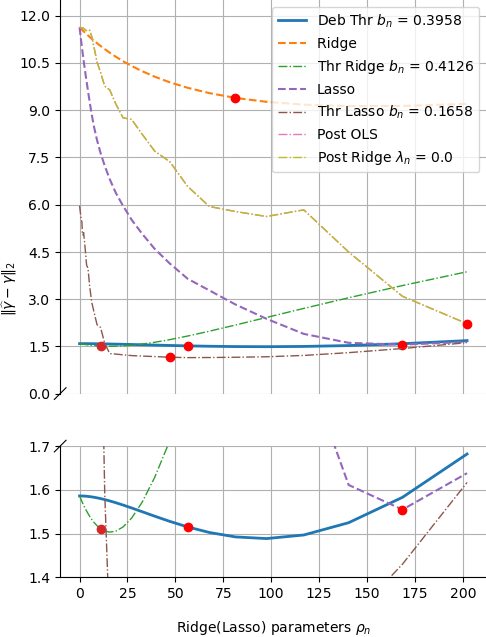}
    \label{2case}
  }
  \subfigure[Case 4]{
    \includegraphics[width = 2.0in]{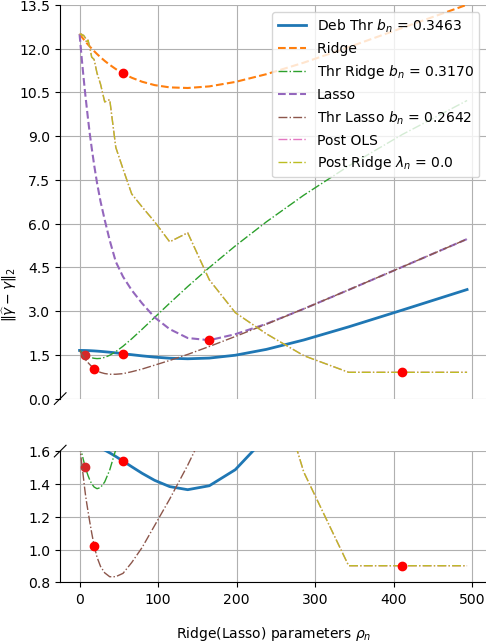}
    \label{2case}
  }
  \subfigure[Case 5(use $\beta$)]{
    \includegraphics[width = 2.0in]{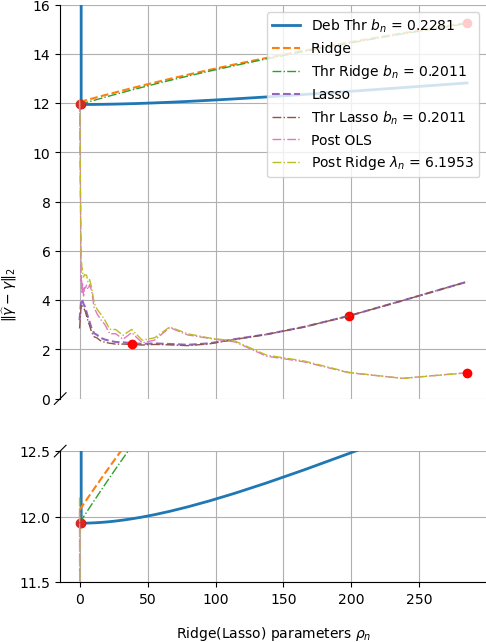}
    \label{2case}
  }
  \subfigure[Case 5(use $\theta$)]{
    \includegraphics[width = 2.0in]{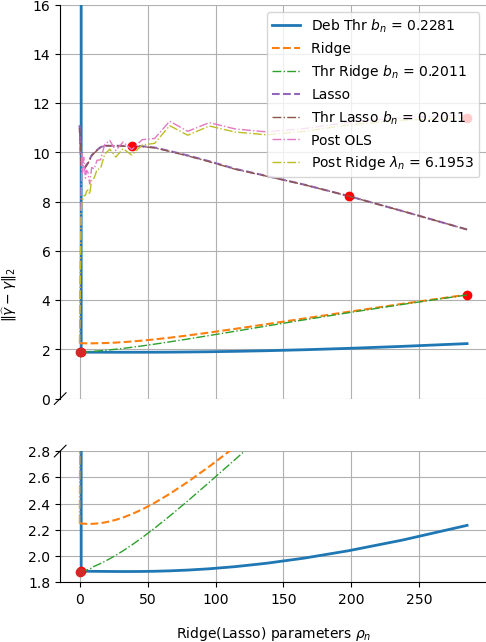}
    \label{2case}
  }
  \subfigure[Case 6(use $\beta$)]{
    \includegraphics[width = 2.0in]{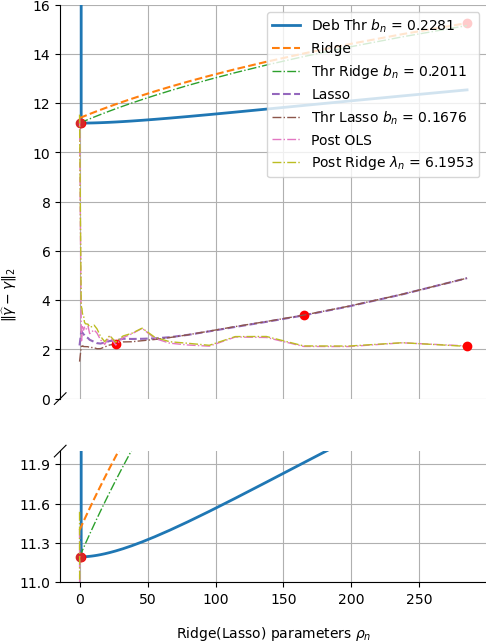}
    \label{2case}
  }
  \subfigure[Case 6(use $\theta$)]{
    \includegraphics[width = 2.0in]{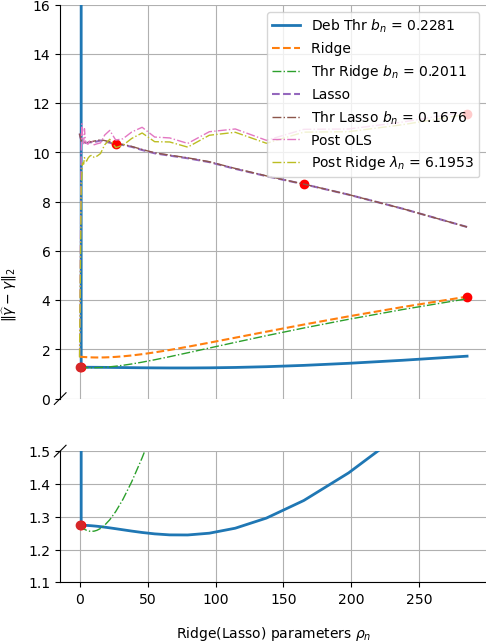}
    \label{2case}
  }
  \caption{Estimation performance of various linear regression methods over the different cases. 'Deb' abbreviates 'Debiased', 'Thr' abbreviates 'Threshold', 'Post' abbreviates 'Post-selection', and 'OLS' abbreviates 'ordinary least square'. Red dots represent the   ridge/Lasso parameters selected by 5-fold cross validation. The optimal  threshold   $b_n$ was also selected  by 5-fold cross validation.
 The vertical axis represents the Euclidean norm of $\widehat{\gamma} - \gamma$ where $\widehat{\gamma}$ is defined in \eqref{HAT}, and $\gamma$ is defined in Section \ref{prelimary}. The little graphs below each of the eight graphs shows a zoomed-in part of the
graph above it.  }
  \label{Figure1}
  \end{figure}

Figure \ref{Figure1} plots the Euclidean norm $\Vert\widehat{\gamma} - \gamma\Vert_2$,
with $\widehat{\gamma}$  defined in \eqref{HAT}, and $\gamma$   defined in Section \ref{prelimary}, for various linear regression methods. When the underlying linear model is sparse, thresholding decreases the ridge regression estimator's error(from around 10 to around 2 in our experiment). However, the performance of the threshold ridge regression method is sensitive to the ridge parameter $\rho_n$, i.e., $\Vert\widehat{\gamma} - \gamma\Vert_2$ can be significantly larger than its minimum despite $\rho_n$ is close to the minimizer of $\Vert\widehat{\gamma} - \gamma\Vert_2$.

In reality,   cross validation does not necessarily guarantee
 selection of the optimal $\rho_n$, so it is risky to use the threshold ridge regression method. Debiasing helps decrease the ridge regression estimator's error; more importantly, it is robust to changes in the
choice of $\rho_n$. Even if a cross validation selects a sub-optimal $\rho_n$, the error of the debiased and threshold ridge regression estimator does not surge, and the estimator's performance does not notably deteriorate. Therefore, we consider the debiased and threshold ridge regression as a practical method to handle real-life data.

Thresholding also helps improve the performance of Lasso, especially when the Lasso parameter is small. However, when the Lasso parameter becomes large, Lasso method already recovers the underlying sparsity of the linear model, and thresholding becomes unnecessary.

When the dimension of parameters $p$ is greater than the sample size $n$, both parameters $\beta$ and $\theta$(see section \ref{prelimary}) could be considered as the `parameters' for the linear model. Lasso methods estimate linear combinations of $\beta$, while ridge regression methods estimate linear combinations of $\theta$. Under this situation, the difference between $\beta$ and $\theta$ is the main factor for the estimators' error. In reality, statisticians cannot distinguish between $\beta$ and $\theta$ based on data. So they need to design which parameters to estimate a priori and select a suitable regression method (e.g., Lasso, ridge regression, or their variations) reflecting their preferences.

As a summary of Figure \ref{Figure1}, apart from having a closed-form formula, the debiased and threshold ridge regression has the smallest estimation error among all ridge regression variations, and has comparable performance to the threshold Lasso. Furthermore, it is not overly sensitive on changes
 in the ridge parameter $\rho_n$. Therefore, even when
  a sub-optimal $\rho_n$ is selected, the performance of the debiased and threshold ridge
regression is not severely affected. When $p>n$, this method (and other ridge regression methods) considers $\theta$ rather than $\beta$ to be the parameter  of the linear model. So, in this case, ridge regression methods are suitable if the underlying linear model is indeed $y = X\theta + \epsilon$(in other words, the projection does not have effect on the parameters of the linear model).

Table \ref{TAB1} records the average errors of the proposed statistics $\widehat{\gamma}$(defined in \eqref{HAT}), $\widehat{\sigma}^2$(defined in \eqref{defSigma}), and the coverage probability of the confidence region \eqref{confRegion} as well as the coverage probability of the prediction region \eqref{prdRegion}, in 1000 numerical simulations. We also record the frequency of model misspecification(i.e., $\widehat{\mathcal{N}}_{b_n}\neq \mathcal{N}_{b_n}$), $P(\widehat{\mathcal{N}}_{b_n}\neq \mathcal{N}_{b_n})$. When the sample size $n$ is greater than the dimension of parameters $p$, thresholding is likely to recover the sparsity of the parameters.
In all  these cases, i.e., Case 1--4, our confidence intervals achieve near-perfect coverage.
The slight under-coverage in prediction intervals is a well-known phenomenon;
see e.g. Ch. 3.7 of Politis \cite{model_free}.

 However, in cases 5 and 6 where $p>n$, $\theta$ is not necessarily sparse, and  model misspecification may happen. Notably, $\widehat{\gamma}$'s error in estimating linear combinations of $\theta$ does not surge even when $p>n$. However, the difference between $\beta$ and $\theta$ introduces a large bias to $\widehat{\gamma}$. Besides, when $p>n$, assumption 6 can be violated. Correspondingly the variance estimator $\widehat{\sigma}^2$ may have a large error. The difference between $\beta$ and $\theta$ invalidates the confidence region
 \eqref{confRegion}. For prediction region \eqref{prdRegion}, this problem still exists. However, the prediction region catches non-negligible errors apart from the asymptotically negligible errors and it is wider than the confidence region. Consequently, as long as the absolute values of difference are small, the prediction interval's performance will not be severely affected.

\begin{table}[htbp]
\centering
\caption{ Frequency of model misspecification; average errors of $\widehat{\gamma}$ and $\widehat{\sigma}^2$; and the coverage probability for the confidence region \eqref{confRegion} and the prediction region \eqref{prdRegion}. The nominal coverage probability is $1-\alpha = 95\%$. The overscore represents calculating the sample mean among $1000$ simulations. We choose the number of bootstrap replicates $B = 500$.}
\begin{tabular}{l l l l l| l}
\hline\hline
\multicolumn{5}{l}{Estimation and Confidence region construction} & \multicolumn{1}{l}{Prediction}\\
Case $\#$ & $P(\widehat{\mathcal{N}}_{b_n}\neq \mathcal{N}_{b_n})$ & $\overline{\max_{i = 1,2,...,p_1}\vert\widehat{\gamma}_i - \gamma_i\vert} $ & $\overline{\vert\widehat{\sigma}^2 - \sigma^2\vert}$ & coverage & coverage\\
1               & $0.0$                                                  & 0.185 & 0.144 & $95.4\%$  & $91.5\%$ \\
2               & $0.0$                                                  & 0.183 & 0.228 & $93.6\%$  & $90.4\%$ \\
3               & $0.0$                                                  & 0.209 & 0.232 & $95.9\%$  & $92.6\%$ \\
4               & $0.0$                                                  & 0.191 & 0.224 & $95.3\%$  & $90.6\%$ \\
5(use $\beta$)  & $0.241$                                                & 1.580 & 1.351 & $0.0\%$   & $97.2\%$ \\
5(use $\theta$) & $0.223$                                                & 0.259 & 1.349 & $97.6\%$  & $98.2\%$ \\
6(use $\beta$)  & $0.225$                                                & 2.259 & 1.353 & $0.0\%$   & $94.6\%$ \\
6(use $\theta$) & $0.254$                                                & 0.260 & 1.352 & $97.3\%$  & $92.8\%$ \\
\hline\hline
\end{tabular}
\label{TAB1}
\end{table}

Figure \ref{Power} plots the power curve of the hypothesis  test
of $\gamma = \gamma_0$ 
 versus $\gamma\neq \gamma_0$; here, we use  $\gamma_0 = \gamma + \delta\times (1,1,...,1)^T$ and $\delta>0$.

\begin{figure}[htbp]
\centering
\includegraphics[width = 3.2in]{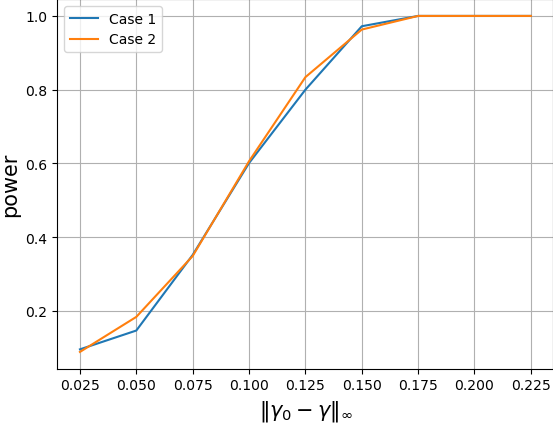}
\caption{Power of the test for cases 1 and 2; the  x-axis represents $\max_{i = 1,...,p_1}\vert\gamma_{0,i} - \gamma_i\vert$. Nominal size for the test is $ 5\%$; see algorithm \ref{BootALG1} for the meaning of notations.}
\label{Power}
\end{figure}

\section{Conclusion}
The paper at hand proposes an improved, i.e.,  debiased and thresholded, ridge regression method that recovers the sparsity of parameters and avoids introducing a large bias. Besides, it derives a consistency result and the Gaussian approximation theorem for the improved ridge estimator. An asymptotically valid confidence region
 for $\gamma = M\beta$ and  a hypothesis test of  $\gamma =\gamma_0$ are also constructed based on
  a wild bootstrap algorithm.
In addition,     a novel, hybrid resampling procedure was proposed
that can be used to perform interval prediction based on the improved ridge regression.

Numerical simulations indicate that improved ridge regression has comparable performance to the threshold Lasso
while having at least two major advantages: (a) Ridge regression is easily computed using a closed-form expression,
and (b) it appears to be quite robust against a non-optimal choice of the ridge parameter $\rho_n$. Therefore,
ridge regression may be found useful again  in applied work using high-dimensional data as long as practitioners
make sure to include   debiasing and thresholding.

\label{CONCLU}

\bibliographystyle{unsrt}
\bibliography{RidgeRegression}
\clearpage

\appendix
\section{Some important lemmas}
\label{lemmas}
This section introduces three useful lemmas. Lemma \ref{lemma_Se} comes from Whittle \cite{doi:10.1137/1105028}, which directly contributes to the model selection consistency. Lemma \ref{CI_lemma_1} and \ref{thm_1} are similar to Chernozhukov et al. \cite{chernozhukov2013}, they used a joint normal distribution to approximate the distribution of linear combinations of independent random variables.

\begin{lemma}
Suppose random variables $\epsilon_1,...,\epsilon_n$ are i.i.d., $\mathbf{E}\epsilon_1 = 0$, and $\exists$ a constant $m>0$ such that $\mathbf{E}\vert\epsilon_1\vert^m<\infty$. In addition suppose the matrix $\Gamma = (\gamma_{ij})_{i=1,2,...,k,j=1,2,...,n}$ satisfies
\begin{equation}
\max_{i=1,2,...,k}\sum_{j=1}^n\gamma_{ij}^2 \leq D,\ D > 0
\end{equation}
Then $\exists$ a constant $E$ which only depends on $m$ and $\mathbf{E}\vert\epsilon_1\vert^m$ such that for $\forall \delta>0$,
\begin{equation}
Prob\left(\max_{i=1,2,...,k}\vert\sum_{j=1}^n\gamma_{ij}\epsilon_j\vert>\delta\right)\leq \frac{k E D^{m/2}}{\delta^m}
\end{equation}
\label{lemma_Se}
\end{lemma}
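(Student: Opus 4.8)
The plan is to reduce the maximal tail bound to a single-row moment bound, and that moment bound is precisely the content of Whittle's inequality. First I would dispose of the maximum over $i$ by a union bound: since
\[
Prob\left(\max_{i=1,2,\dots,k}\left|\sum_{j=1}^n\gamma_{ij}\epsilon_j\right|>\delta\right)\le\sum_{i=1}^k Prob\left(\left|\sum_{j=1}^n\gamma_{ij}\epsilon_j\right|>\delta\right),
\]
it suffices to produce, for each fixed row $i$, a tail bound of the form $Prob(|\sum_j\gamma_{ij}\epsilon_j|>\delta)\le E\,D^{m/2}/\delta^m$ with $E$ not depending on $i$, $n$, or the entries $\gamma_{ij}$.

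For a fixed $i$, write $S=\sum_{j=1}^n\gamma_{ij}\epsilon_j$. The idea is to control the $m$-th absolute moment of $S$ and then invoke Markov's inequality, $Prob(|S|>\delta)\le\mathbf{E}|S|^m/\delta^m$. The key step is the bound $\mathbf{E}|S|^m\le B_m\big(\sum_j\gamma_{ij}^2\big)^{m/2}\mathbf{E}|\epsilon_1|^m$ for a constant $B_m$ depending only on $m$; this is exactly Whittle's moment inequality for linear forms in independent mean-zero variables. I would either cite it directly from Whittle \cite{doi:10.1137/1105028}, or re-derive it (for $m\ge 2$) from the Marcinkiewicz--Zygmund inequality $\mathbf{E}|S|^m\le B_m\,\mathbf{E}\big(\sum_j\gamma_{ij}^2\epsilon_j^2\big)^{m/2}$ followed by a Jensen step: normalizing $w_j=\gamma_{ij}^2/\sum_l\gamma_{il}^2$ so that $\sum_j w_j=1$, convexity of $t\mapsto t^{m/2}$ gives $\big(\sum_j w_j\epsilon_j^2\big)^{m/2}\le\sum_j w_j|\epsilon_j|^m$, and taking expectations collapses the right-hand side to $\mathbf{E}|\epsilon_1|^m$ because the $\epsilon_j$ are identically distributed.

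Combining the two ingredients and using the hypothesis $\sum_j\gamma_{ij}^2\le D$, I obtain for each $i$ the bound
\[
Prob\left(\left|\sum_{j=1}^n\gamma_{ij}\epsilon_j\right|>\delta\right)\le\frac{B_m\,\mathbf{E}|\epsilon_1|^m\,D^{m/2}}{\delta^m},
\]
and summing over the $k$ rows yields the claimed inequality with $E=B_m\,\mathbf{E}|\epsilon_1|^m$, which indeed depends only on $m$ and $\mathbf{E}|\epsilon_1|^m$.

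The main obstacle is establishing the dimension-free moment inequality $\mathbf{E}|S|^m\le B_m\big(\sum_j\gamma_{ij}^2\big)^{m/2}\mathbf{E}|\epsilon_1|^m$ with a constant independent of $n$ and of the weights; everything else (union bound, Markov) is routine. The delicate point is that the constant must not degrade as the number of summands grows, which is exactly what the Marcinkiewicz--Zygmund/Whittle machinery guarantees, and why the i.i.d.\ mean-zero structure is essential. If one wished to cover $0<m<2$ rather than assume $m\ge 2$, the Jensen step would have to be replaced by the form of the Marcinkiewicz--Zygmund inequality valid in that range; since the paper's applications use $m>4$, this refinement is not needed here.
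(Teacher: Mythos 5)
Your proposal is correct and follows essentially the same route as the paper's own proof: a union bound over the $k$ rows, Markov's inequality at the $m$-th moment, and Whittle's moment inequality $\mathbf{E}\vert\sum_j\gamma_{ij}\epsilon_j\vert^m \leq C(m)\,\mathbf{E}\vert\epsilon_1\vert^m\bigl(\sum_j\gamma_{ij}^2\bigr)^{m/2}$ (the paper cites Theorem 2 of Whittle \cite{doi:10.1137/1105028} directly, which is the inequality you propose to cite or rederive via Marcinkiewicz--Zygmund). Your closing remark about the restriction to $m\geq 2$ is a fair caveat that applies equally to the paper's argument, and is harmless since the applications all use $m>4$.
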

\begin{proof}
From theorem 2 in \cite{doi:10.1137/1105028}, for any $i=1,2,...,k$,
\begin{equation}
Prob\left(\vert\sum_{j=1}^n\gamma_{ij}\epsilon_j\vert>\delta\right)\leq \frac{\mathbf{E}\vert \sum_{j=1}^n\gamma_{ij}\epsilon_j\vert^m}{\delta^m}\leq \frac{2^mC(m)\mathbf{E}\vert\epsilon_1\vert^m(\sum_{j=1}^n\gamma_{ij}^2)^{m/2}}{\delta^m}\leq \frac{2^mC(m)\mathbf{E}\vert\epsilon_1\vert^m D^{m/2}}{\delta^m}
\end{equation}
Choose $E = 2^mC(m)\mathbf{E}\vert\epsilon_1\vert^m$,
\begin{equation}
Prob\left(\max_{i=1,2,...,k}\vert\sum_{j=1}^n\gamma_{ij}\epsilon_j\vert>\delta\right)\leq \sum_{i=1}^kProb\left(\vert\sum_{j=1}^n\gamma_{ij}\epsilon_j\vert>\delta\right)\leq \frac{k E D^{m/2}}{\delta^m}
\end{equation}
\end{proof}

\begin{lemma}
Suppose $\epsilon = (\epsilon_1,...,\epsilon_n)^T$ are joint normal random variables with mean $\mathbf{E}\epsilon = 0$, non-singular covariance matrix $\mathbf{E}\epsilon\epsilon^T$, and positive marginal variance $\sigma_i^2 = \mathbf{E}\epsilon_i^2 > 0$, $i=1,2,...,n$. In addition, suppose $\exists$ two constants $0< c_0\leq C_0<\infty$ such that $c_0\leq \sigma_i\leq C_0$ for $i=1,2,...,n$. Then for any given $\delta>0$,
\begin{equation}
\sup_{x\in\mathbf{R}}\left( Prob(\max_{i=1,2,...,n}\vert\epsilon_i\vert\leq x+\delta)-Prob(\max_{i=1,2,...,n}\vert\epsilon_i\vert\leq x)\right)\leq C\delta(\sqrt{\log(n)} + \sqrt{\vert\log(\delta)\vert} + 1)
\end{equation}
$C$ only depends on $c_0$ and $C_0$.
\label{CI_lemma_1}
\end{lemma}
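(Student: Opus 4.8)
The plan is to reduce the statement to a Gaussian anti-concentration bound for the maximum of a (possibly high-dimensional) Gaussian vector, and then to prove that bound by separating a central ``bulk'' region of $x$ from a ``tail'' region. Writing $W=\max_{i=1,\dots,n}|\epsilon_i|$ and folding the absolute values, I would first observe that $W=\max_{1\le j\le 2n}Z_j$, where $Z=(\epsilon_1,\dots,\epsilon_n,-\epsilon_1,\dots,-\epsilon_n)^T$ is a centered jointly Gaussian vector in $\mathbf{R}^{2n}$ all of whose marginal standard deviations lie in $[c_0,C_0]$. Hence it suffices to bound $\sup_x\bigl(Prob(W\le x+\delta)-Prob(W\le x)\bigr)$ for the maximum $W$ of an $N$-dimensional centered Gaussian vector ($N=2n$) with marginal variances in $[c_0^2,C_0^2]$; since $W\ge 0$, only $x\ge-\delta$ contributes, and $\sqrt{\log(2n)}=O(\sqrt{\log n})$ is absorbed into the final constant.

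For the tail region I would use a threshold $R$ of order $C_0(\sqrt{\log N}+\sqrt{\log(1/\delta)})$. For $x\ge R$, the event $\{x<W\le x+\delta\}$ forces some coordinate into the slab $(x,x+\delta]$, so a union bound over coordinates together with the one-dimensional estimate $Prob(x<Z_j\le x+\delta)\le \delta\,(\sqrt{2\pi}\,c_0)^{-1}\exp(-x^2/(2C_0^2))$ yields a bound of order $N\delta\exp(-x^2/(2C_0^2))$; the choice of $R$ (equivalently, the sub-Gaussian tail of the $1$-Lipschitz functional $W$ via the Borell--TIS inequality) makes the Gaussian factor dominate $N$ and controls this region. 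It is this truncation level that carries the $\sqrt{\log N}$ and $\sqrt{|\log\delta|}$ contributions into the final bound.

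The genuine difficulty is the bulk region $-\delta\le x<R$, where the union bound is hopeless: it loses the full factor $N$ and is useless for moderate $x$. Here one must exploit the joint Gaussian structure rather than treat the coordinates separately. I would note that $Prob(W\le x+\delta)-Prob(W\le x)$ is exactly a rectangle-probability increment, $Prob(Z_j\le x+\delta\ \forall j)-Prob(Z_j\le x\ \forall j)$, and invoke a Nazarov-type anti-concentration inequality, which bounds this increment by $c_0^{-1}\delta(\sqrt{2\log N}+2)$; the lower bound $c_0$ on the marginal standard deviations is essential, as anti-concentration degenerates if a variance tends to $0$. A self-contained proof of this step would replace $\max_jZ_j$ by the smooth surrogate $\beta^{-1}\log\sum_j e^{\beta Z_j}$ (which overshoots the maximum by at most $\beta^{-1}\log N$), bound the density of the smoothed functional through Gaussian integration by parts, and optimize the bandwidth $\beta$. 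Combining the tail and bulk contributions, taking the supremum over $x$, and tracking that every constant depends only on $c_0$ and $C_0$ then gives the claim. The point to watch throughout is that $N=2n$ grows with the sample size, so the dependence on $N$ must be kept logarithmic rather than polynomial---precisely what the joint-Gaussian bulk analysis secures.
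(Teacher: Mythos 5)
Your proof is correct, and at the top level it follows the same strategy as the paper: fold the two-sided maximum and then invoke an external Gaussian anti-concentration result; the difference lies in which result is invoked, and how the folding is exploited. The paper bounds the increment by $Prob(\vert\max_i\epsilon_i - x\vert\le\delta)+Prob(\vert\max_i(-\epsilon_i)-x\vert\le\delta)$ and applies Theorem 3 (together with (18) and (19)) of its cited anti-concentration reference to each $n$-dimensional one-sided maximum; that theorem bounds the concentration function by a multiple of $\delta\left(\mathbf{E}[\max_i\epsilon_i/\sigma_i]+\sqrt{1\vee\log(\underline{\sigma}/\delta)}\right)$, which is exactly where both the $\sqrt{\log n}$ and the $\sqrt{\vert\log\delta\vert}$ terms in the lemma's statement originate. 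You instead keep the folded $2n$-dimensional vector $Z=(\epsilon^T,-\epsilon^T)^T$, read the increment as a rectangle-probability increment, and invoke Nazarov's inequality, which requires only the lower bound $c_0$ on marginal standard deviations and in particular tolerates the singular covariance of $Z$ (a point worth stating explicitly, since the folded vector is necessarily degenerate). This buys a cleaner and in fact stronger conclusion, $c_0^{-1}\delta(\sqrt{2\log(2n)}+2)$, with no $\sqrt{\vert\log\delta\vert}$ term at all, which implies the lemma at once.

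Two caveats. First, your tail/bulk split is redundant: Nazarov's inequality is uniform in $x$, so the union-bound tail estimate adds nothing, and your claim that the truncation level $R$ ``carries'' the $\sqrt{\log N}$ and $\sqrt{\vert\log\delta\vert}$ contributions into the final bound is inaccurate --- the $\sqrt{\log N}$ comes from the bulk inequality itself, and $\sqrt{\vert\log\delta\vert}$ is simply not needed on your route. Second, your sketch of a self-contained proof (log-sum-exp surrogate plus Gaussian integration by parts, then optimize the bandwidth $\beta$) should not be presented as routine: the direct Stein-identity computation bounds the density of the smoothed maximum by $\sqrt{\mathbf{1}^T\Sigma^{-1}\mathbf{1}}$, which is already of order $\sqrt{N}$ for independent coordinates and is unavailable here because $\Sigma$ is singular; obtaining the $\sqrt{\log N}$ dependence genuinely requires Nazarov's geometric argument (or the argument in the paper's cited reference). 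Since you use the inequality as a black box --- precisely as the paper uses its cited theorem --- this does not create a gap in your proof, but the sketch as written would not succeed.
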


\begin{proof}[Proof of lemma \ref{CI_lemma_1}]
First for any $i=1,2,...,n$,
\begin{equation}
\begin{aligned}
\vert\epsilon_i\vert=\max(\epsilon_i,-\epsilon_i)
\Rightarrow \max_{i=1,...,n}\vert\epsilon_i\vert = \max(\max_{i=1,...,n}\epsilon_i,\max_{i=1,...,n}-\epsilon_i)
\end{aligned}
\label{maxx}
\end{equation}
Therefore, for any $x\in\mathbf{R}$,
\begin{equation}
\begin{aligned}
Prob(\max_{i=1,2,...,n}\vert\epsilon_i\vert\leq x+\delta)-Prob(\max_{i=1,2,...,n}\vert\epsilon_i\vert\leq x) = Prob(0<\max(\max_{i=1,...,n}\epsilon_i,\max_{i=1,...,n}-\epsilon_i)-x\leq \delta)\\
\leq Prob(0<\max_{i=1,...,n}\epsilon_i - x\leq\delta) + Prob(0<\max_{i=1,...,n}-\epsilon_i - x\leq \delta)\\
\leq Prob(\vert \max_{i=1,...,n}\epsilon_i - x\vert\leq \delta) + Prob(\vert \max_{i=1,...,n}-\epsilon_i - x\vert\leq \delta)
\end{aligned}
\end{equation}
$-\epsilon$ is also joint normal with mean $0$ and marginal variance $\mathbf{E}(-\epsilon_j)^2=\sigma_j^2$. From theorem 3 and (18), (19) in \cite{AntiConcentration}, by defining $\underline{\sigma} = \min_{i=1,2,...,n}\sigma_i\leq \max_{i=1,2,...,n}\sigma_i = \overline{\sigma}$,
we have
\begin{equation}
\begin{aligned}
\sup_{x\in\mathbf{R}}Prob\left(\vert\max_{i=1,2,...,n}\epsilon_i - x\vert\leq \delta\right)\leq \frac{\sqrt{2}\delta}{\underline{\sigma}}\left(\sqrt{\log(n)} + \sqrt{\max(1, \log(\underline{\sigma}) - \log(\delta))}\right)\\
+ \frac{4\sqrt{2}\delta}{\underline{\sigma}}\times\left(\frac{\overline{\sigma}}{\underline{\sigma}}\sqrt{\log(n)} + 2 + \frac{\overline{\sigma}}{\underline{\sigma}}\sqrt{\max(0,\log(\underline{\sigma}) - \log(\delta))}\right)\\
\leq \frac{\sqrt{2}\delta}{c_0}\left(\sqrt{\log(n)} + \sqrt{1 + \vert\log(c_0)\vert + \vert\log(C_0)\vert} + \sqrt{\vert\log(\delta)\vert}\right)\\
+ \frac{4\sqrt{2}\delta C_0}{c_0^2}\left(\sqrt{\log(n)} + 2 + \sqrt{\vert\log(c_0)\vert + \vert\log(C_0)\vert} + \sqrt{\vert\log(\delta)\vert}\right)\\
\leq \left(\frac{\sqrt{2\times (1 + \vert\log(c_0)\vert + \vert \log(C_0)\vert)}}{c_0} + \frac{4\sqrt{2}C_0}{c_0^2}(2 + \sqrt{\vert\log(c_0)\vert+\vert\log(C_0)\vert})\right)\times \delta\left(\sqrt{\log(n)} + 1 + \sqrt{\vert\log(\delta)\vert}\right)
\end{aligned}
\label{C_0}
\end{equation}
Choose $C = \frac{\sqrt{2\times (1 + \vert\log(c_0)\vert + \vert \log(C_0)\vert)}}{c_0} + \frac{4\sqrt{2}C_0}{c_0^2}(2 + \sqrt{\vert\log(c_0)\vert+\vert\log(C_0)\vert})$, which only depends on $c_0,\ C_0$. Then
\begin{equation}
\sup_{x\in\mathbf{R}}(Prob(\max_{i=1,2,...,n}\vert\epsilon_i\vert\leq x+\delta)-Prob(\max_{i=1,2,...,n}\vert\epsilon_i\vert\leq x))\leq 2C\delta(1 + \sqrt{\log(n)} + \sqrt{\vert\log(\delta)\vert})
\end{equation}
\end{proof}

\begin{lemma}
Suppose $\epsilon = (\epsilon_1,...\epsilon_n)^T$ are i.i.d. random variables with $\mathbf{E}\epsilon_1=0$, $\mathbf{E}\epsilon_1^2=\sigma^2$ and $\mathbf{E}\vert\epsilon_1\vert^3<\infty$. $\Gamma=(\gamma_{ij})_{i=1,2,...,n,j=1,2,...,k}$ is an $n\times k$ ($1\leq k\leq n$) rank $k$ matrix. And $\exists$ constants $0<c_\Gamma\leq C_\Gamma<\infty$ such that
$c_\Gamma^2\leq\sum_{j=1}^n \gamma_{ji}^2\leq C_\Gamma^2$ for $i=1,2,...,k$. $\widehat\sigma^2=\widehat\sigma^2(\epsilon)$ is an estimator of $\sigma^2$ and random variables $\epsilon^*|\epsilon=(\epsilon_1^*,...,\epsilon_n^*)^T|\epsilon$ are i.i.d. with $\epsilon^*_1$ having normal distribution $\mathcal{N}(0,\widehat\sigma^2)$. $\frac{\epsilon_i^*}{\widehat\sigma}$ is independent of $\epsilon$ for $i=1,2,...,n$. In addition, suppose one of the following conditions:

C1. $\exists$ a constant $0<\alpha_\sigma\leq 1/2$ such that
\begin{equation}
\vert\sigma^2 - \widehat{\sigma}^2\vert = O_p(n^{-\alpha_\sigma})\ \ \text{and } \max_{j=1,2,...,n,\ i=1,2,...,k}\vert\gamma_{ji}\vert = o(\min(n^{(\alpha_\sigma - 1)/2}\times \log^{-3/2}(n),\ n^{-1/3}\times\log^{-3/2}(n))
\end{equation}

C2. $\exists$ a constant $0<\alpha_\sigma< 1/2$ such that
\begin{equation}
\vert\sigma^2-\widehat\sigma^2\vert = O_p(n^{-\alpha_\sigma}),\ k = o(n^{\alpha_\sigma} \times \log^{-3}(n)),\ \max_{j=1,...,n,i=1,...,k}\vert\gamma_{ji}\vert = O(n^{-\alpha_\sigma}\times\log^{-3/2}(n))
\end{equation}

Then we have
\begin{equation}
\sup_{x\in[0,\infty)}\vert Prob(\max_{i=1,2,...,k}\vert\sum_{j=1}^n \gamma_{ji}\epsilon_j\vert\leq x)-Prob^*(\max_{i=1,2,...,k}\vert\sum_{j=1}^n\gamma_{ji}\epsilon_j^*\vert\leq x)\vert = o_P(1)
\label{res_11}
\end{equation}

In particular, if $\widehat{\sigma} = \sigma$, by assuming one of the following conditions,

$C_1^{'}.$
\begin{equation}
\max_{j=1,2,...,n,i=1,2,...,k}\vert\gamma_{ji}\vert = o(n^{-1/3}\times \log^{-3/2}(n))
\end{equation}

$C_2^{'}.$
\begin{equation}
k\times \max_{j=1,2,...,n,i=1,2,...,k}\vert\gamma_{ji}\vert = o(\log^{-9/2}(n))
\end{equation}
Then we have
\begin{equation}
\sup_{x\in[0,\infty)}\vert Prob(\max_{i=1,2,...,k}\vert\sum_{j=1}^n \gamma_{ji}\epsilon_j\vert\leq x)-Prob(\max_{i=1,2,...,k}\vert\sum_{j=1}^n\gamma_{ji}\epsilon_j^*\vert\leq x)\vert = o(1)
\label{RESTHM2}
\end{equation}
\label{thm_1}
\end{lemma}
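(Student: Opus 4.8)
The plan is to recast both maxima as functionals of sums of independent vectors and then to run the Gaussian-approximation pipeline of Chernozhukov et al. \cite{chernozhukov2013}. Writing $S_i = \sum_{j=1}^n\gamma_{ji}\epsilon_j$, I view $S = (S_1,\dots,S_k)^T = \sum_{j=1}^n V_j$ as a sum of independent, mean-zero vectors $V_j = \epsilon_j(\gamma_{j1},\dots,\gamma_{jk})^T$. Exactly as in the proof of Lemma \ref{CI_lemma_1}, $\max_i|S_i| = \max\{\max_i S_i,\ \max_i(-S_i)\}$, so it suffices to treat the maximum over the $2k$ coordinates of $(S,-S)$ and I may work with one-sided maxima throughout. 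The covariance of $S$ is $\sigma^2\Gamma^T\Gamma$, whose diagonal entries $\sigma^2\sum_j\gamma_{ji}^2$ lie in $[\sigma^2 c_\Gamma^2,\sigma^2 C_\Gamma^2]$; this uniform boundedness away from $0$ and $\infty$ is precisely what licenses the anti-concentration bound of Lemma \ref{CI_lemma_1} for the associated Gaussian maximum.

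I would first observe that the bootstrap side is exactly Gaussian conditionally on $\epsilon$. Writing $\epsilon_i^* = \widehat\sigma\,\xi_i$ with $\xi_i$ i.i.d.\ $\mathcal{N}(0,1)$ independent of $\epsilon$ (legitimate because $\epsilon_i^*/\widehat\sigma$ is independent of $\epsilon$), the bootstrap statistic becomes $\max_i|\widehat\sigma W_i|$, where $W_i = \sum_j\gamma_{ji}\xi_j$ is a centered Gaussian vector with covariance $\Gamma^T\Gamma$ that is independent of $\widehat\sigma$. Hence $Prob^*(\max_i|S_i^*|\le x) = G(x/\widehat\sigma)$ with $G(t) = Prob(\max_i|W_i|\le t)$. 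The goal is therefore to show that $Prob(\max_i|S_i|\le x)$ is uniformly close to $G(x/\widehat\sigma)$, which I split, through the intermediate quantity $G(x/\sigma)$ and the triangle inequality, into a Gaussian-approximation step and a variance-swap step.

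The Gaussian-approximation step asserts $\sup_x|Prob(\max_i|S_i|\le x) - G(x/\sigma)| \to 0$, i.e.\ that the non-Gaussian $\max_i|S_i|$ is close to $\max_i|\sigma W_i|$. I would invoke the high-dimensional CLT for maxima of \cite{chernozhukov2013}, whose error is governed by $\mathbf{E}|\epsilon_1|^3$, the maximal coefficient $b := \max_{j,i}|\gamma_{ji}|$ (encoding that no single summand dominates), the dimension $k$, and $n$; Lemma \ref{lemma_Se} provides the tail control on the non-Gaussian summands needed for the truncation in this bound. Conditions $C_1'$ (uniform smallness $b = o(n^{-1/3}\log^{-3/2}n)$) and $C_2'$ (the moderate-dimension regime $k\,b = o(\log^{-9/2}(n))$) are exactly the two alternative regimes under which this error vanishes, which gives the deterministic $o(1)$ bound \eqref{RESTHM2} when $\widehat\sigma = \sigma$, since then the swap step below is vacuous.

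For the variance-swap step I would compare $G(x/\sigma)$ with $G(x/\widehat\sigma)$. Setting $t = x/\sigma$ and $r = \sigma/\widehat\sigma\to_P 1$ with $|r-1| = O_P(n^{-\alpha_\sigma})$, I compare $G(t)$ with $G(rt)$: on the bulk $t\le A\sqrt{\log(2k)}\,C_\Gamma$ the argument shifts by at most $O_P(\sqrt{\log k}\,n^{-\alpha_\sigma})$, and Lemma \ref{CI_lemma_1} bounds the resulting increment of $G$ by $O_P(\log(k)\,n^{-\alpha_\sigma})$, while on the tail a Gaussian maximal inequality shows $G(t)$ and $G(rt)$ are both within $o(1)$ of $1$. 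Since $k\le n$ forces $\log(k)\,n^{-\alpha_\sigma}\le\log(n)\,n^{-\alpha_\sigma}\to 0$, this step is $o_P(1)$, and combining the two steps yields \eqref{res_11}. The main obstacle is the first step: the bookkeeping of the Chernozhukov--Chetverikov--Kato error in terms of $b$, $k$, and $n$ and, critically in the random-$\widehat\sigma$ case, its interaction with the $O_P(n^{-\alpha_\sigma})$ variance-estimation fluctuation, which is what forces the stronger coefficient and dimension requirements of $C_1$ and $C_2$ (notably the extra factor $n^{(\alpha_\sigma-1)/2}$) relative to $C_1'$ and $C_2'$.
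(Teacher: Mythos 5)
Your proposal is correct in outline, but it takes a genuinely different route from the paper's proof, so it is worth comparing the two. The paper never introduces the intermediate Gaussian law $G(x/\sigma)$: it performs a single Lindeberg interpolation, summand by summand, between an i.i.d.\ copy $\xi$ of the true errors and the bootstrap errors $\epsilon^*\sim\mathcal{N}(0,\widehat\sigma^2)$, using the smooth-max functions $h_{\beta,\psi,x}=g_{\psi,x}(G_\beta(\cdot))$ and Lemma \ref{CI_lemma_1} to undo the smoothing. In that scheme the variance mismatch enters through the second-order Taylor term, $\mathbf{E}(\xi_s^2-\epsilon_s^{*2}\mid\cdot)=\sigma^2-\widehat\sigma^2$, multiplied by $\sum_{s}\max_i\gamma_{si}^2$; this coupling of the variance error with the coefficient array is exactly what produces the composite conditions C1 and C2 (in particular the factor $n^{(\alpha_\sigma-1)/2}$ in C1 and the bound on $k$ in C2). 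Your decomposition instead exploits the exact conditional Gaussianity of the bootstrap side, $Prob^*\bigl(\max_i\vert\sum_j\gamma_{ji}\epsilon_j^*\vert\leq x\bigr)=G(x/\widehat\sigma)$ with $G(t)=Prob\bigl(\max_i\vert\sum_j\gamma_{ji}\xi_j\vert\leq t\bigr)$, $\xi_j$ i.i.d.\ standard normal, and triangulates through $G(x/\sigma)$: the first term is precisely the deterministic statement \eqref{RESTHM2}, which you outsource to \cite{chernozhukov2013} (the paper in effect re-derives it, so on this step the two arguments are the same pipeline and the bookkeeping you flag as the main obstacle is what the paper actually executes); the second term is a pure rescaling, handled by Lemma \ref{CI_lemma_1} on a bulk region and a Gaussian tail bound outside it. This modularity buys something real: it decouples the variance-estimation error from the coefficient conditions, showing that C1$'$ (resp.\ C2$'$) together with $\vert\widehat\sigma^2-\sigma^2\vert=O_p(n^{-\alpha_\sigma})$ already imply \eqref{res_11}; since C1 implies C1$'$ and C2 implies C2$'$, your argument covers the lemma as stated and in fact proves it under weaker hypotheses, revealing the extra strength of C1/C2 to be an artifact of the one-shot interpolation. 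Two caveats: first, your closing sentence---that the interaction of the Gaussian-approximation error with the $O_p(n^{-\alpha_\sigma})$ fluctuation is what ``forces'' C1/C2---contradicts your own construction, since in your split no such interaction exists; you should either claim the stronger conclusion or identify where your swap step fails (it does not fail). Second, your bulk/tail cutoff $t\leq A\sqrt{\log(2k)}\,C_\Gamma$ with fixed $A$ yields a tail union bound of order $k^{1-A^2/2}$, which is not $o(1)$ when $k$ stays bounded; take a cutoff growing with $n$ (e.g.\ $\log n$, using $k\leq n$) so that both $1-G(t)$ and $1-G(rt)$ vanish uniformly. Neither caveat affects the viability of the approach.
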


\begin{proof}[Proof of lemma \ref{thm_1}]
In this proof we define $\Gamma = (\gamma_1,...,\gamma_k)$ with $\gamma_i = (\gamma_{1i}, \gamma_{2i},...,\gamma_{ni})^T\in\mathbf{R}^n$. For $i=1,2,...,k$, $\gamma_i^T\epsilon = \sum_{j=1}^n\gamma_{ji}\epsilon_j$.
From lemma A.2 and (8) in \cite{chernozhukov2013}, and (S1) to (S5) in \cite{10.1093/biomet/asz020}, for $x=(x_1,...,x_n)$ and $y,z\in\mathbf{R}$, define
\begin{equation}
F_\beta(x)=\frac{1}{\beta}\log\left(\sum_{i=1}^n\exp(\beta x_i)\right),\ g_0(y) = (1-\min(1,\max(y,0))^4)^4,\ g_{\psi,z}(y) = g_0(\psi(y-z))
\label{notion}
\end{equation}
Here $\beta,\psi>0$. Then $g_{\psi,z}\in\mathbf{C}^3$ is nonincreasing function. $g_0 =1$ with $y\leq 0$, $0$ with $y\geq 1$, and
\begin{equation}
\begin{aligned}
g_*=\max_{y\in\mathbf{R}}(\vert g_0^{'}(y)\vert+\vert g_0^{''}(y)\vert+\vert g_0^{'''}(y)\vert)<\infty,\ \mathbf{1}_{y\leq z}\leq g_{\psi,z}(y)\leq \mathbf{1}_{y\leq z+\psi^{-1}}\\
\sup_{y,z\in\mathbf{R}}\vert g_{\psi,z}^{'}(y)\vert\leq g_*\psi,\ \sup_{y,z\in\mathbf{R}}\vert g_{\psi,z}^{''}(y)\vert\leq g_*\psi^2,\ \sup_{y,z\in\mathbf{R}}\vert g_{\psi,z}^{'''}(y)\vert\leq g_*\psi^3\\
\frac{\partial F_\beta}{\partial x_i} = \frac{\exp(\beta x_i)}{\sum_{j=1}^n \exp(\beta x_j)}\Rightarrow \frac{\partial F_\beta}{\partial x_i}\geq 0,\ \sum_{i=1}^n \frac{\partial F_\beta}{\partial x_i}=1,\ \sum_{i=1}^n\sum_{j=1}^n\vert\frac{\partial^2 F_\beta}{\partial x_i\partial x_j}\vert\leq 2\beta,\ \sum_{i=1}^n\sum_{j=1}^n\sum_{k=1}^n\vert\frac{\partial^3 F_\beta}{\partial x_i\partial x_j\partial x_k}\vert\leq 6\beta^2\\
F_\beta(x_1,...,x_n)-\frac{\log(n)}{\beta}\leq\max_{i=1,...,n} x_i\leq F_\beta(x_1,...,x_n)
\end{aligned}
\label{prop}
\end{equation}
For any given $x=(x_1,...,x_n)\in\mathbf{R}^n$, define function

\noindent$G_\beta(x) = \frac{1}{\beta}\log(\sum_{i=1}^n\exp(\beta x_i)+\sum_{i=1}^n\exp(-\beta x_i)) = F_\beta(x_1,...,x_n,-x_1,...,-x_n)$.
From \eqref{prop} and \eqref{maxx}, for $i,j,k=1,...,n$
\begin{equation}
\begin{aligned}
G_\beta(x)-\frac{\log(2n)}{\beta}\leq\max_{i=1,...,n}\vert x_i\vert\leq G_\beta(x),\ \frac{\partial G_\beta}{\partial x_i} = \frac{\partial F_\beta}{\partial x_i}-\frac{\partial F_\beta}{\partial x_{i+n}}\Rightarrow \sum_{i=1}^n\vert\frac{\partial G_\beta}{\partial x_i}\vert \leq \sum_{i=1}^n \frac{\partial F_\beta}{\partial x_i} +\frac{\partial F_\beta}{\partial x_{i+n}} = 1\\
\frac{\partial^2 G_\beta}{\partial x_i\partial x_j} = \frac{\partial^2 F_\beta}{\partial x_i\partial x_j} - \frac{\partial^2 F_\beta}{\partial x_i\partial x_{j+n}} - \frac{\partial^2 F_\beta}{\partial x_{i+n}\partial x_j} + \frac{\partial^2 F_\beta}{\partial x_{i+n}\partial x_{j+n}}\Rightarrow \sum_{i=1}^n\sum_{j=1}^n\vert\frac{\partial^2 G_\beta}{\partial x_i\partial x_j}\vert\leq \sum_{i=1}^{2n}\sum_{j=1}^{2n}\vert\frac{\partial^2 F_\beta}{\partial x_i\partial x_j}\vert\leq 2\beta\\
\frac{\partial ^3 G_\beta}{\partial x_i\partial x_j\partial x_k} = \frac{\partial^3 F_\beta}{\partial x_i\partial x_j\partial x_k}-\frac{\partial^3 F_\beta}{\partial x_i\partial x_j\partial x_{k+n}} - \frac{\partial^3 F_\beta}{\partial x_i\partial x_{j+n}\partial x_k} +\frac{\partial^3 F_\beta}{\partial x_i\partial x_{j+n}\partial x_{k+n}} - \frac{\partial^3 F_\beta}{\partial x_{i+n}\partial x_j\partial x_k} + \frac{\partial^3 F_\beta}{\partial x_{i+n}\partial x_j\partial x_{k+n}} \\
+ \frac{\partial^3 F_\beta}{\partial x_{i+n}\partial x_{j+n}\partial x_k} - \frac{\partial^3 F_\beta}{\partial x_{i+n}\partial x_{j+n}\partial x_{k+n}}
\Rightarrow \sum_{i=1}^n\sum_{j=1}^n\sum_{k=1}^n\vert\frac{\partial ^3 G_\beta}{\partial x_i\partial x_j\partial x_k}\vert\leq \sum_{i=1}^{2n}\sum_{j=1}^{2n}\sum_{k=1}^{2n}\vert\frac{\partial^3 F_\beta}{\partial x_i\partial x_j\partial x_k}\vert\leq 6\beta^2
\end{aligned}
\label{property of G}
\end{equation}
Define $h_{\beta, \psi, x}(x_1,...,x_n) = g_{\psi, x}(G_\beta(x_1,...,x_n))$. Direct calculation shows $\frac{\partial h_{\beta, \psi, x}(x_1,...,x_n)}{\partial x_i} = g^{'}_{\psi,x}(G_\beta(x_1,...,x_n))\frac{\partial G_\beta}{\partial x_i}\Rightarrow \sum_{i=1}^n \vert \frac{\partial h_{\beta, \psi, x}(x_1,...,x_n)}{\partial x_i}\vert\leq g_*\psi$;
\begin{equation}
\begin{aligned}
\frac{\partial^2 h_{\beta, \psi, x}(x_1,...,x_n)}{\partial x_i\partial x_j} = g^{''}_{\psi,x}(G_\beta(x_1,...,x_n))\frac{\partial G_\beta}{\partial x_i}\frac{\partial G_\beta}{\partial x_j} + g^{'}_{\psi,x}(G_\beta(x_1,...,x_n))\frac{\partial^2 G_\beta}{\partial x_i\partial x_j}\\
\Rightarrow \sum_{i=1}^n\sum_{j=1}^n\vert \frac{\partial^2 h_{\beta, \psi, x}(x_1,...,x_n)}{\partial x_i\partial x_j}\vert\leq g_*\psi^2\left(\sum_{i=1}^n\vert\frac{\partial G_\beta}{\partial x_i}\vert\right)^2 + g_*\psi\sum_{i=1}^n\sum_{j=1}^n\vert\frac{\partial^2 G_\beta}{\partial x_i\partial x_j}\vert\leq g_*\psi^2+2g_*\psi\beta\\
\text{and }\frac{\partial^3 h_{\beta, \psi, x}(x_1,...,x_n)}{\partial x_i\partial x_j\partial x_k} = g^{'''}_{\psi,x}(G_\beta(x_1,...,x_n))\frac{\partial G_\beta}{\partial x_i}\frac{\partial G_\beta}{\partial x_j}\frac{\partial G_\beta}{\partial x_k} + g^{''}_{\psi,x}(G_\beta(x_1,...,x_n))\frac{\partial^2 G_\beta}{\partial x_i\partial x_k}\frac{\partial G_\beta}{\partial x_j}\\ +g^{''}_{\psi,x}(G_\beta(x_1,...,x_n))\frac{\partial G_\beta}{\partial x_i}\frac{\partial^2 G_\beta}{\partial x_j\partial x_k}
+g^{''}_{\psi,x}(G_\beta(x_1,...,x_n))\frac{\partial^2 G_\beta}{\partial x_i\partial x_j}\frac{\partial G_\beta}{\partial x_k}
+g^{'}_{\psi,x}(G_\beta(x_1,...,x_n))\frac{\partial^3 G_\beta}{\partial x_i\partial x_j\partial x_k}\\
\Rightarrow \sum_{i=1}^n\sum_{j=1}^n\sum_{k=1}^n\vert\frac{\partial^3 h_{\beta, \psi, x}(x_1,...,x_n)}{\partial x_i\partial x_j\partial x_k}\vert\leq g_*\psi^3\left(\sum_{i=1}^n\vert\frac{\partial G_\beta}{\partial x_i}\vert\right)^3 + 3g_*\psi^2\left(\sum_{i=1}^n\sum_{j=1}^n\vert\frac{\partial^2G_\beta}{\partial x_i\partial x_j}\vert\right)\times\left(\sum_{k=1}^n\vert\frac{\partial G_\beta}{\partial x_k}\vert\right)\\
+g_*\psi\sum_{i=1}^n\sum_{j=1}^n\sum_{k=1}^n\vert\frac{\partial ^3 G_\beta}{\partial x_i\partial x_j\partial x_k}\vert\leq g_*\psi^3+6g_*\psi^2\beta+6g_*\psi\beta^2
\end{aligned}
\label{deri}
\end{equation}
Define $\xi=(\xi_1,...,\xi_n)$ as i.i.d. random variables with the same marginal distribution as $\epsilon_1$, and is independent of $\epsilon,\epsilon^*$. Therefore, $Prob(\max_{i=1,2,...,k}\vert\gamma_i^T\epsilon\vert\leq x)=Prob^*(\max_{i=1,2,...,k}\vert\sum_{j=1}^n\gamma_i^T\xi\vert\leq x)$ for any $x$. Since $c_\Gamma^2\leq \mathbf{E}^*\left(\sum_{l=1}^n \frac{\gamma_{il}\epsilon^*_l}{\widehat{\sigma}}\right)^2 = \sum_{l=1}^n\gamma_{il}^2\leq C_\Gamma^2\ \text{for $i=1,2,...,k$}$. According to \eqref{maxx}, \eqref{property of G} and lemma \ref{CI_lemma_1},
$\exists$ a constant $C$ which only depends on $c_\Gamma$ and $C_\Gamma$ such that for any given $\psi,\beta,\widehat\sigma>0$,
\begin{equation}
\begin{aligned}
\sup_{x\in\mathbf{R}}\left(Prob^*\left(\max_{i=1,2,...,k}\vert\gamma_i^T\epsilon^*\vert\leq x + \frac{1}{\psi} + \frac{\log(2k)}{\beta}\right) - Prob^*\left(\max_{i=1,2,...,k}\vert\gamma_i^T\epsilon^*\vert\leq x\right)\right)\\
=\sup_{x\in\mathbf{R}}\left(Prob^*\left(\max_{i=1,2,...,k}\vert\frac{\gamma_i^T\epsilon^*}{\widehat{\sigma}}\vert\leq\frac{x}{\widehat{\sigma}} + \frac{1}{\psi\widehat{\sigma}} + \frac{\log(2k)}{\beta\widehat{\sigma}}\right) - Prob^*\left(\max_{i=1,2,...,k}\vert\frac{\gamma_i^T\epsilon^*}{\widehat{\sigma}}\vert
\leq \frac{x}{\widehat{\sigma}}\right)\right)\\
\leq C\times\left(\frac{1}{\psi\widehat{\sigma}} + \frac{\log(2k)}{\beta\widehat{\sigma}}\right)\times\left(1 + \sqrt{\log(n)} + \sqrt{\vert\log\left(\frac{1}{\psi\widehat{\sigma}} + \frac{\log(2k)}{\beta\widehat{\sigma}}\right)\vert}\right)
\end{aligned}
\end{equation}
Define $z = C\times\left(\frac{1}{\psi\widehat{\sigma}} + \frac{\log(2k)}{\beta\widehat{\sigma}}\right)\times\left(1 + \sqrt{\log(n)} + \sqrt{\vert\log\left(\frac{1}{\psi\widehat{\sigma}} + \frac{\log(2k)}{\beta\widehat{\sigma}}\right)\vert}\right)$. For any $x\geq 0$,
\begin{equation}
\begin{aligned}
Prob(\max_{i=1,2,...,k}\vert\gamma_i^T\epsilon\vert\leq x)-Prob^*(\max_{i=1,2,...,k}\vert\gamma_i^T\epsilon^*\vert\leq x)\\
\leq  Prob^*(\max_{i=1,2,...,k}\vert\gamma_i^T\xi\vert\leq x)-Prob^*(\max_{i=1,2,...,k}\vert\gamma_i^T\epsilon^*\vert\leq x +\frac{1}{\psi}+\frac{\log(2k)}{\beta})
+ z\\
\leq Prob^*(G_\beta(\gamma_1^T\xi,...,\gamma_k^T\xi)\leq x+\frac{\log(2k)}{\beta}) - Prob^*(G_\beta(\gamma_1^T\epsilon^*,...,\gamma_k^T\epsilon^*)\leq x+\frac{1}{\psi}+\frac{\log(2k)}{\beta}) + z\\
\leq \mathbf{E}^*h_{\beta, \psi, x+\frac{\log(2k)}{\beta}}(\gamma_1^T\xi,...,\gamma_k^T\xi)-h_{\beta, \psi, x+\frac{\log(2k)}{\beta}}(\gamma_1^T\epsilon^*,...,\gamma_k^T\epsilon^*)
+ z\\
Prob(\max_{i=1,2,...,k}\vert\gamma_i^T\epsilon\vert\leq x)-Prob^*(\max_{i=1,2,...,k}\vert\gamma_i^T\epsilon^*\vert\leq x)\\
\geq Prob^*(\max_{i=1,2,...,k}\vert\gamma_i^T\xi\vert\leq x)-Prob^*(\max_{i=1,2,...,k}\vert\gamma_i^T\epsilon^*\vert\leq x -\frac{1}{\psi}-\frac{\log(2k)}{\beta}) - z\\
\geq Prob^*(G_\beta(\gamma_1^T\xi,...,\gamma_k^T\xi)\leq x) - Prob^*(G_\beta(\gamma_1^T\epsilon^*,...,\gamma_k^T\epsilon^*)\leq x-\frac{1}{\psi})
-z\\
\geq \mathbf{E}^*h_{\beta,\psi,x-\frac{1}{\psi}}(\gamma_1^T\xi,...,\gamma_k^T\xi)-h_{\beta,\psi,x-\frac{1}{\psi}}(\gamma_1^T\epsilon^*,...,\gamma_k^T\epsilon^*)
- z
\end{aligned}
\end{equation}
Therefore, we have
\begin{equation}
\begin{aligned}
\sup_{x\in[0,\infty)}\vert Prob(\max_{i=1,2,...,k}\vert\gamma_i^T\epsilon\vert\leq x)-Prob^*(\max_{i=1,2,...,k}\vert\gamma_i^T\epsilon^*\vert\leq x)\vert
\leq z +\sup_{x\in\mathbf{R}}\vert \mathbf{E}^*h_{\beta,\psi,x}(\gamma_1^T\xi,...,\gamma_k^T\xi)- h_{\beta,\psi,x}(\gamma_1^T\epsilon^*,...,\gamma_k^T\epsilon^*)\vert
\end{aligned}
\label{first}
\end{equation}
For any $i=1,2,...,k,j=1,2,...,n$, define $H_{ij}=\sum_{s=1}^{j-1}\gamma_{si}\xi_s+\sum_{s=j+1}^n\gamma_{si}\epsilon_s^*$, $m_{ij}=\gamma_{ji}\xi_j$ and $m^*_{ij}=\gamma_{ji}\epsilon^*_j$, we have $H_{ij}+m_{ij}=H_{ij+1}+m^*_{ij+1}$, and
\begin{equation}
\begin{aligned}
\sup_{x\in\mathbf{R}}\vert \mathbf{E}^*h_{\beta,\psi,x}(\gamma_1^T\xi,...,\gamma_k^T\xi) - h_{\beta,\psi,x}(\gamma_1^T\epsilon^*,...,\gamma_k^T\epsilon^*)\vert\\
=\sup_{x\in\mathbf{R}}\vert \sum_{s=1}^n\mathbf{E}^*h_{\beta,\psi,x}(H_{1s}+m_{1s},...,H_{ks}+m_{ks})
-h_{\beta,\psi,x}(H_{1s}+m_{1s}^*,...,H_{ks}+m_{ks}^*)\vert\\
\leq \sum_{s=1}^n \sup_{x\in\mathbf{R}}\vert\mathbf{E}^*h_{\beta,\psi,x}(H_{1s}+m_{1s},...,H_{ks}+m_{ks})
-h_{\beta,\psi,x}(H_{1s}+m_{1s}^*,...,H_{ks}+m_{ks}^*)\vert
\end{aligned}
\end{equation}
Since $\mathbf{E}(\xi_s|\epsilon,\xi_b,\epsilon^*_b,b\neq s)=\mathbf{E}(\epsilon^*_s|\epsilon,\xi_b,\epsilon^*_b,b\neq s)=0$, $\mathbf{E}(\xi_s^2-\epsilon^{*2}_s|\epsilon,\xi_b,\epsilon^*_b,b\neq s)=\sigma^2-\widehat\sigma^2$, from multivariate Taylor's theorem and \eqref{deri}, for any $s=1,2,...,n$ and $x\in\mathbf{R}$,
\begin{equation}
\begin{aligned}
\vert\mathbf{E}\left(h_{\beta, \psi, x}(H_{1s}+m_{1s},...,H_{ks}+m_{ks})
-h_{\beta,\psi,x}(H_{1s}+m_{1s}^*,...,H_{ks}+m_{ks}^*)\right)\Big|\epsilon,\xi_b,\epsilon^*_b,b\neq s\vert\\
\leq\vert\sum_{i=1}^k \frac{\partial h_{\beta,\psi,x}(H_{1s},...,H_{ks})}{\partial x_i}\gamma_{si}\mathbf{E}(\xi_s-\epsilon^*_s|\epsilon,\xi_b,\epsilon^*_b,b\neq s)\vert+\frac{1}{2}\vert\sum_{i=1}^k\sum_{j=1}^k\frac{\partial^2 h_{\beta,\psi,x}(H_{1s},...,H_{ks})}{\partial x_i\partial x_j}\gamma_{si}\gamma_{sj}\mathbf{E}(\xi_s^2-\epsilon^{*2}_s|\epsilon,\xi_b,\epsilon^*_b,b\neq s)\vert\\
+(g_*\psi^3+g_*\psi^2\beta+g_*\psi\beta^2)\max_{i = 1,2,...,k}\vert\gamma_{si}\vert^3\times(\mathbf{E}\vert\epsilon_1\vert^3 + \widehat{\sigma}^3D)\\
\Rightarrow \sup_{x\in\mathbf{R}}\vert\mathbf{E}h_{\beta, \psi, x}(H_{1s}+m_{1s},...,H_{ks}+m_{ks})
-h_{\beta,\psi,x}(H_{1s}+m_{1s}^*,...,H_{ks}+m_{ks}^*)|\epsilon,\xi_b,\epsilon^*_b,b\neq s\vert\\
\leq g_*(\psi^2 + \psi\beta)\vert\sigma^2-\widehat\sigma^2\vert\times\max_{i=1,...,k}\gamma_{si}^2 + (\mathbf{E}\vert\epsilon_1\vert^3+\widehat\sigma^3D)\times g_*(\psi^3+\psi^2\beta+\psi\beta^2)\times \max_{i=1,...,k}\vert\gamma_{si}\vert^3
\end{aligned}
\end{equation}
Here $D = \mathbf{E}\vert Y\vert^3$ with $Y$ having normal distribution with mean $0$ and variance $1$. Then
\begin{equation}
\begin{aligned}
\sup_{x\in[0,\infty)}\vert Prob(\max_{i=1,2,...,k}\vert\gamma_i^T\epsilon\vert\leq x)-Prob^*(\max_{i=1,2,...,k}\vert\gamma_i^T\epsilon^*\vert\leq x)\vert\\
\leq z +(g_*\psi^2+g_*\psi\beta)\vert\sigma^2-\widehat\sigma^2\vert\times\sum_{s=1}^n\max_{i=1,...,k}\gamma_{si}^2 + (\mathbf{E}\vert\epsilon_1\vert^3+\widehat\sigma^3D)\times g_*(\psi^3+\psi^2\beta+\psi\beta^2)\times \sum_{s=1}^n\max_{i=1,...,k}\vert\gamma_{si}\vert^3
\end{aligned}
\label{small}
\end{equation}
In particular, for any given $\delta > 0$, choose $\psi = \beta = \log^{3/2}(n)/\delta^{1/4}$ and suppose $\frac{3\sigma}{2}>\widehat{\sigma} > \frac{\sigma}{2}$. For sufficiently large $n$ we have $\frac{1}{\psi\widehat{\sigma}} + \frac{\log(2k)}{\beta\widehat{\sigma}}\leq \frac{4\log(n)}{\psi\sigma}\leq \frac{4\delta^{1/4}}{\sigma\sqrt{\log(n)}} < 1$ and
\begin{equation}
z \leq \frac{4C\log(n)}{\psi\sigma}\times \left(2\sqrt{\log(n)} + \sqrt{\log(\psi\widehat{\sigma})}\right)\leq \frac{4C\delta^{1/4}}{\sigma}\left(2 +\sqrt{\frac{\frac{3}{2}\log(\log(n)) + \log(3\sigma/2\delta^{1/4})}{\log(n)}}\right)\leq C^{'}\delta^{1/4},\ C^{'} = \frac{12C}{\sigma}
\end{equation}

Suppose condition C1. For any $1>\delta>0$, $\exists D_\delta >0$ such that for sufficiently large $n$,
\begin{equation}
\begin{aligned}
Prob\left(\vert\sigma^2 - \widehat{\sigma}^2\vert \leq D_\delta\times n^{-\alpha_\sigma}\right)> 1- \delta,\ \ \   \max_{j=1,2,...,n,i=1,2,...,k}\vert\gamma_{ji}\vert<\delta\times n^{(\alpha_\sigma -  1)/2}\times\log^{-3/2}(n),\\ \max_{j=1,2,...,n,i=1,2,...,k}\vert\gamma_{ji}\vert< \delta\times n^{-1/3}\times\log^{-3/2}(n)
\end{aligned}
\label{CondC1}
\end{equation}
Choose $\psi = \beta = \log^{3/2}(n)/\delta^{1/4}$. According to \eqref{small}, for sufficiently large $n$, \eqref{CondC1} happens and $\frac{1}{2}\sigma<\widehat{\sigma}<\frac{3}{2}\sigma$ with probability $1-\delta$. If \eqref{CondC1} happens,
\begin{equation}
\begin{aligned}
\sup_{x\in[0,\infty)}\vert Prob(\max_{i=1,2,...,k}\vert\gamma_i^T\epsilon\vert\leq x)-Prob^*(\max_{i=1,2,...,k}\vert\gamma_i^T\epsilon^*\vert\leq x)\vert\\
\leq C^{'}\delta^{1/4} + 2g_*\psi^2\times D_\delta\times n^{-\alpha_\sigma}\times \frac{\delta^2\times n^{\alpha_\sigma}}{\log^{3}(n)} + (\mathbf{E}\vert\epsilon_1\vert^3+\frac{27D}{8}\sigma^3)\times3g_*\psi^3\times \delta^3\times n\times \frac{1}{n\log^{9/2}(n)}\\
=C^{'}\delta^{1/4} + 2g_*D_\delta\delta^{3/2} + 3g_*(\mathbf{E}\vert\epsilon_1\vert^3+\frac{27D}{8}\sigma^3)\times \delta^{9/4}\\
\end{aligned}
\end{equation}
For $\delta > 0$ can be arbitrarily small, we prove \eqref{res_11}.

Suppose condition C2. For any $\delta > 0$, there exists $D_\delta > 0$ such that
\begin{equation}
\begin{aligned}
Prob\left(\vert\sigma^2 - \widehat{\sigma}^2\vert \leq D_\delta \times n^{-\alpha_\sigma}\right)\geq 1-\delta,\ \ \ k\leq \frac{\delta n^{\alpha_\sigma}}{\log^{3}(n)},\ \ \
\max_{i=1,2,...,k}\sum_{j=1}^n\gamma_{ji}^2\leq D_\delta,\
\max_{j=1,2,...,n,i=1,2,...,k}\vert\gamma_{ji}\vert\leq\frac{D_\delta\times n^{-\alpha_\sigma}}{\log^{3/2}(n)}
\end{aligned}
\label{CondC2}
\end{equation}
Since
\begin{equation}
\begin{aligned}
\sum_{j=1}^n\max_{i=1,...,k}\gamma_{ji}^2\leq \sum_{j=1}^n\sum_{i=1}^k\gamma_{ji}^2\leq kD_\delta\\
\sum_{j=1}^n\max_{i=1,...,k}\gamma_{ji}^3\leq \max_{j=1,2,...,n,i=1,2,...,k}\vert\gamma_{ji}\vert\times \sum_{j=1}^n\max_{i=1,...,k}\gamma_{ji}^2\leq kD_\delta\times \max_{j=1,2,...,n,i=1,2,...,k}\vert\gamma_{ji}\vert
\end{aligned}
\label{KSSS}
\end{equation}
If \eqref{CondC2} happens, by choosing $\psi = \beta = \log^{3/2}(n)/\delta^{1/4}$
\begin{equation}
\begin{aligned}
\sup_{x\in[0,\infty)}\vert Prob(\max_{i=1,2,...,k}\vert\gamma_i^T\epsilon\vert\leq x)-Prob^*(\max_{i=1,2,...,k}\vert\gamma_i^T\epsilon^*\vert\leq x)\vert\\
\leq C^{'}\delta^{1/4} + 2g_*\psi^2D_\delta n^{-\alpha_\sigma}\times kD_\delta + (\mathbf{E}\vert\epsilon_1\vert^3 + \frac{27D}{8}\sigma^3)\times 3g_*\psi^3\times kD_\delta\max_{j=1,2,...,n,i=1,2,...,k}\vert\gamma_{ji}\vert\\
\leq C^{'}\delta^{1/4} + 2g_*D_\delta^2\times \frac{\log^3(n)}{\delta^{1/2}}\times\frac{\delta n^{\alpha_\sigma}}{\log^{3}(n)}\times n^{-\alpha_\sigma} + 3(\mathbf{E}\vert\epsilon_1\vert^3 + \frac{27D}{8}\sigma^3)g_*D_\delta^2\times \frac{\log^{9/2}(n)}{\delta^{3/4}}\times \frac{\delta n^{\alpha_\sigma}}{\log^{3}(n)}\times \frac{n^{-\alpha_\sigma}}{\log^{3/2}(n)}\\
=C^{'}\delta^{1/4} + 2g_*D_\delta^2\delta^{1/2} + 3(\mathbf{E}\vert\epsilon_1\vert^3 + \frac{27D}{8}\sigma^3)g_*D_\delta^2\times\delta^{1/4}
\end{aligned}
\end{equation}
and we prove \eqref{res_11}.

If $\widehat{\sigma} = \sigma$. We choose $\psi = \beta = \log^{3/2}(n)/\delta^{1/4}$, \eqref{small} can be modified to
\begin{equation}
\begin{aligned}
\sup_{x\in[0,\infty)}\vert Prob(\max_{i=1,2,...,k}\vert\gamma_i^T\epsilon\vert\leq x)-Prob(\max_{i=1,2,...,k}\vert\gamma_i^T\epsilon^*\vert\leq x)\vert
\leq C^{'}\delta^{1/4} + (\mathbf{E}\vert\epsilon_1\vert^3+D\sigma^3) g_*\psi(\psi^2+\psi\beta+\beta^2) \sum_{s=1}^n\max_{i=1,...,k}\vert\gamma_{si}\vert^3
\end{aligned}
\end{equation}

Suppose condition $C1^{'}$. For any $\delta > 0$ and sufficiently large $n$, $\max_{j=1,2,...,n, i=1,2,...,k}\vert\gamma_{ji}\vert\leq \delta\times n^{-1/3}\log^{-3/2}(n)$,
\begin{equation}
\begin{aligned}
\sup_{x\in[0,\infty)}\vert Prob(\max_{i=1,2,...,k}\vert\sum_{j=1}^n \gamma_{ji}\epsilon_j\vert\leq x)-Prob(\max_{i=1,2,...,k}\vert\sum_{j=1}^n\gamma_{ji}\epsilon_j^*\vert\leq x)\vert\leq C^{'}\delta^{1/4} + 3(\mathbf{E}\vert\epsilon_1\vert^3+D\sigma^3)g_*\times\delta^{9/4}
\end{aligned}
\end{equation}
and we prove \eqref{RESTHM2}.

Suppose condition $C2^{'}$. For any $\delta > 0$ and sufficiently large $n$, $k\times \max_{j=1,2,...,n,i=1,2,...,k}\vert\gamma_{ji}\vert\leq \delta\log^{-9/2}(n)$. According to \eqref{KSSS}, for sufficiently large $n$ we have
\begin{equation}
\begin{aligned}
\sup_{x\in[0,\infty)}\vert Prob(\max_{i=1,2,...,k}\vert\sum_{j=1}^n \gamma_{ji}\epsilon_j\vert\leq x)-Prob(\max_{i=1,2,...,k}\vert\sum_{j=1}^n\gamma_{ji}\epsilon_j^*\vert\leq x)\vert\leq C^{'}\delta^{1/4}
+3(\mathbf{E}\vert\epsilon_1\vert^3+D\sigma^3)g_*D_\delta\times\delta^{1/4}
\end{aligned}
\end{equation}
and we prove \eqref{RESTHM2}.
\end{proof}

Condition C1 implies $C1^{'}$, and condition C2 implies $C2^{'}$. The additional proportions in C1 and C2 accommodate the error introduced in estimating errors' variance. Condition C1 is designed for the situation when the number of linear combinations $k$ is as large as the sample size $n$; and condition C2 can be used when $k$ is significantly smaller than $n$.

The difference between lemma \ref{thm_1} and the classical central limit theorem is that $k$ can grow as $n$ increases. The maximum $\max_{i=1,2,...,k}\vert\sum_{j=1}^n \gamma_{ji}\epsilon_j\vert$ does not have an asymptotic distribution if $k\to\infty$. However, if the random variables are mixed well, approximating the distribution of $\max_{i=1,2,...,k}\vert\sum_{j=1}^n \gamma_{ji}\epsilon_j\vert$ by the distribution of the maximum of normal random variables is still applicable.
With the help of lemma \ref{thm_1}, we can establish the normal approximation theorem and construct the simultaneous confidence region for $\widehat{\gamma}$(defined in \eqref{HAT}).

\section{Proofs of theorems in section \ref{CLT}}
\label{APPENX}
This section applies notations in section \ref{prelimary}.
\begin{proof}[Proof of theorem \ref{thm1}]
From \eqref{EXPAN},
\begin{equation}
\begin{aligned}
Prob\left(\widehat{\mathcal{N}}_{b_n}\neq \mathcal{N}_{b_n}\right)\leq Prob\left(\min_{i\in\mathcal{N}_{b_n}}\vert\widetilde{\theta}_i\vert\leq b_n\right) + Prob\left(\max_{i\not\in\mathcal{N}_{b_n}}\vert\widetilde{\theta}_i\vert>b_n\right)\\
\leq Prob\left(\min_{i\in\mathcal{N}_{b_n}}\vert\theta_i\vert - \max_{i\in\mathcal{N}_{b_n}}\rho_n^2\vert\sum_{j=1}^r\frac{q_{ij}\zeta_j}{(\lambda_j^2 + \rho_n)^2}\vert - \max_{i\in\mathcal{N}_{b_n}}\vert\sum_{j=1}^rq_{ij}\left(\frac{\lambda_j}{\lambda_j^2 + \rho_n} + \frac{\rho_n\lambda_j}{(\lambda_j^2 + \rho_n)^2}\right)\sum_{l=1}^np_{lj}\epsilon_l\vert\leq b_n\right)\\
+Prob\left(\max_{i\not\in\mathcal{N}_{b_n}}\vert\theta_i\vert + \max_{i\not\in\mathcal{N}_{b_n}}\rho_n^2\vert\sum_{j=1}^r\frac{q_{ij}\zeta_j}{(\lambda_j^2 + \rho_n)^2}\vert+\max_{i\not\in\mathcal{N}_{b_n}}\vert\sum_{j=1}^rq_{ij}\left(\frac{\lambda_j}{\lambda_j^2 + \rho_n} + \frac{\rho_n\lambda_j}{(\lambda_j^2 + \rho_n)^2}\right)\sum_{l=1}^np_{lj}\epsilon_l\vert > b_n\right)
\end{aligned}
\end{equation}
From Cauchy inequality,
\begin{equation}
\begin{aligned}
\max_{i=1,2,...,p}\rho_n^2\vert\sum_{j=1}^r\frac{q_{ij}\zeta_j}{(\lambda_j^2 + \rho_n)^2}\vert\leq \max_{i=1,2,...,p}\rho_n^2\sqrt{\sum_{j=1}^rq_{ij}^2}\times \sqrt{\sum_{j=1}^r\frac{\zeta_j^2}{(\lambda_j^2 + \rho_n)^4}} = O(n^{\alpha_\theta-2\delta})\\
\max_{i=1,2,...,p}\sum_{l=1}^n\left(\sum_{j=1}^rq_{ij}\left(\frac{\lambda_j}{\lambda_j^2 + \rho_n} + \frac{\rho_n\lambda_j}{(\lambda_j^2 + \rho_n)^2}\right)p_{lj}\right)^2 = \max_{i=1,2,...,p}\sum_{j=1}^rq_{ij}^2\left(\frac{\lambda_j}{\lambda_j^2 + \rho_n} + \frac{\rho_n\lambda_j}{(\lambda_j^2 + \rho_n)^2}\right)^2\leq \max_{i=1,2,...,p}\frac{4\sum_{j=1}^rq_{ij}^2}{\lambda_r^2}
\end{aligned}
\label{ETA}
\end{equation}
Therefore, for sufficiently large $n$, from assumption 4 and lemma \ref{lemma_Se}
\begin{equation}
\begin{aligned}
\min_{i\in\mathcal{N}_{b_n}}\vert\theta_i\vert - \max_{i\in\mathcal{N}_{b_n}}\rho_n^2\vert\sum_{j=1}^r\frac{q_{ij}\zeta_j}{(\lambda_j^2 + \rho_n)^2}\vert - b_n>\frac{1}{2}(\frac{1}{c_b} - 1)b_n\\
b_n - \max_{i\not\in\mathcal{N}_{b_n}}\vert\theta_i\vert -  \max_{i\in\mathcal{N}_{b_n}}\rho_n^2\vert\sum_{j=1}^r\frac{q_{ij}\zeta_j}{(\lambda_j^2 + \rho_n)^2}\vert> \frac{1}{2}(1 - c_b)b_n\\
\Rightarrow Prob\left(\widehat{\mathcal{N}}_{b_n}\neq \mathcal{N}_{b_n}\right)\leq \frac{\vert\mathcal{N}_{b_n}\vert\times E\times 2^m}{\lambda_r^m\times (\frac{1}{2}(\frac{1}{c_b} - 1)b_n)^m} + \frac{(p - \vert\mathcal{N}_{b_n}\vert)\times E\times 2^m}{\lambda_r^m\times (\frac{1}{2}(1 - c_b)b_n)^m} = O(n^{\alpha_p + m\nu_b-m\eta})
\end{aligned}
\label{Gamma_Decom}
\end{equation}
Define $\widehat{\gamma} = M\widehat{\theta} = (\widehat{\gamma}_1,...,\widehat{\gamma}_{p_1})^T$ and $\gamma = M\beta = (\gamma_1,...,\gamma_{p_1})^T$. For $\beta = \theta + \theta_\perp$, if $\widehat{\mathcal{N}}_{b_n} = \mathcal{N}_{b_n}$, \eqref{EXPAN} and \eqref{def_cik} imply
\begin{equation}
\begin{aligned}
\max_{i=1,2,...,p_1}\vert\widehat{\gamma}_i - \gamma_i\vert = \max_{i=1,2,...,p_1}\vert\sum_{j\in\mathcal{N}_{b_n}}m_{ij}\widetilde{\theta}_j - \sum_{j\in\mathcal{N}_{b_n}}m_{ij}\theta_j - \sum_{j\not\in\mathcal{N}_{b_n}}m_{ij}\theta_j - \sum_{j=1}^pm_{ij}\theta_{\perp,j}\vert\\
\leq \max_{i=1,2,...,p_1}\rho_n^2\vert\sum_{k=1}^r\frac{c_{ik}\zeta_k}{(\lambda_k^2 + \rho_n)^2}\vert + \max_{i = 1,2,...,p_1}\vert\sum_{k=1}^rc_{ik}\left(\frac{\lambda_k}{\lambda_k^2 + \rho_n} + \frac{\rho_n\lambda_k}{(\lambda^2+\rho_n)^2}\right)\sum_{l=1}^np_{lk}\epsilon_l\vert\\
+ \max_{i=1,2,...,p_1}\vert\sum_{j\not\in\mathcal{N}_{b_n}}m_{ij}\theta_j\vert + \max_{i=1,2,...,p_1}\vert\sum_{j=1}^pm_{ij}\theta_{\perp,j}\vert
\end{aligned}
\label{GammaExpan}
\end{equation}
From \eqref{def_cik} and assumption 5, if $i\not\in\mathcal{M}$, then $c_{ik} = 0$ for $k=1,2,...,r$, so from Cauchy inequality and lemma \ref{lemma_Se},
\begin{equation}
\begin{aligned}
\max_{i=1,2,...,p_1}\rho_n^2\vert\sum_{k=1}^r\frac{c_{ik}\zeta_k}{(\lambda_k^2 + \rho_n)^2}\vert\leq \max_{i\in\mathcal{M}}\rho_n^2\sqrt{\sum_{k=1}^rc_{ik}^2}\times\sqrt{\sum_{k=1}^r\frac{\zeta_k^2}{(\lambda_k^2+\rho_n)^4}}\leq \sqrt{C_{\mathcal{M}}}\rho_n^2\times \frac{\Vert\theta\Vert_2}{\lambda_r^4} = O(n^{\alpha_\theta - 2\delta})\\
\max_{i\in\mathcal{M}}\sum_{l = 1}^n\left(\sum_{k=1}^rc_{ik}p_{lk}\left(\frac{\lambda_k}{\lambda_k^2 + \rho_n} + \frac{\rho_n\lambda_k}{(\lambda_k^2 +\rho_n)^2}\right)\right)^2 = \max_{i\in\mathcal{M}}\sum_{k=1}^rc^2_{ik}\left(\frac{\lambda_k}{\lambda_k^2 + \rho_n} + \frac{\rho_n\lambda_k}{(\lambda_k^2 + \rho_n)^2}\right)^2\leq \frac{4C_\mathcal{M}}{\lambda_r^2}\\
\Rightarrow Prob\left(\max_{i = 1,2,...,p_1}\vert\sum_{k=1}^rc_{ik}\left(\frac{\lambda_k}{\lambda_k^2 + \rho_n} + \frac{\rho_n\lambda_k}{(\lambda^2+\rho_n)^2}\right)\sum_{l=1}^np_{lk}\epsilon_l\vert > \delta\right)\leq \frac{\vert\mathcal{M}\vert\times E\times 2^mC_\mathcal{M}^{m/2}}{\lambda_r^m\delta^m}\ \ \text{for $\forall \delta>0$}\\
\Rightarrow \max_{i = 1,2,...,p_1}\vert\sum_{k=1}^rc_{ik}\left(\frac{\lambda_k}{\lambda_k^2 + \rho_n} + \frac{\rho_n\lambda_k}{(\lambda^2+\rho_n)^2}\right)\sum_{l=1}^np_{lk}\epsilon_l\vert = O_p(\vert\mathcal{M}\vert^{1/m}\times n^{-\eta})
\end{aligned}
\label{Delta2}
\end{equation}
Here $E$ is the constant defined in lemma \ref{lemma_Se}. Combine with assumption 2, assumption 5, and \eqref{Gamma_Decom}, we prove \eqref{SELConst}.

If $\widehat{\mathcal{N}}_{b_n} = \mathcal{N}_{b_n}$, since $X\beta = X\theta$, we have
\begin{equation}
\begin{aligned}
\widehat{\sigma}^2 - \sigma^2 = \frac{1}{n}\sum_{i=1}^n\left(\epsilon_i - \sum_{j\in\mathcal{N}_{b_n}}x_{ij}(\widetilde{\theta}_j - \theta_j) + \sum_{j\not\in\mathcal{N}_{b_n}}x_{ij}\theta_j\right)^2 - \sigma^2\\
=\frac{1}{n}\sum_{i=1}^n\epsilon_i^2 - \sigma^2 + \frac{1}{n}\sum_{i=1}^n\left(\sum_{j\in\mathcal{N}_{b_n}}x_{ij}(\widetilde{\theta}_j - \theta_j)\right)^2 + \frac{1}{n}\sum_{i=1}^n\left(\sum_{j\not\in\mathcal{N}_{b_n}}x_{ij}\theta_j\right)^2 - \frac{2}{n}\sum_{i=1}^n\sum_{j\in\mathcal{N}_{b_n}}\epsilon_ix_{ij}(\widetilde{\theta}_j - \theta_j)\\
 +\frac{2}{n}\sum_{i=1}^n\sum_{j\not\in\mathcal{N}_{b_n}}\epsilon_ix_{ij}\theta_j - \frac{2}{n}\sum_{i=1}^n\left(\sum_{j\in\mathcal{N}_{b_n}}x_{ij}(\widetilde{\theta}_j - \theta_j)\right)\times\left(\sum_{j\not\in\mathcal{N}_{b_n}}x_{ij}\theta_j\right)
\end{aligned}
\end{equation}
From assumption 3, $\mathbf{E}\left(\frac{1}{n}\sum_{i=1}^n\epsilon_i^2 - \sigma^2\right)^2 \leq \frac{2}{n}(\mathbf{E}\epsilon_1^4 +\sigma^4) = O(1/n)\Rightarrow\frac{1}{n}\sum_{i=1}^n\epsilon_i^2 - \sigma^2 = O_p(1/\sqrt{n})$. For the second term, from assumption 1 and \eqref{ETA},
\begin{equation}
\begin{aligned}
\frac{1}{n}\sum_{i=1}^n\left(\sum_{j\in\mathcal{N}_{b_n}}x_{ij}(\widetilde{\theta}_j - \theta_j)\right)^2\leq C_\lambda^2\sum_{j\in\mathcal{N}_{b_n}}(\widetilde{\theta}_j - \theta_j)^2\\
\leq 2C_\lambda^2\sum_{j\in\mathcal{N}_{b_n}}\left(\rho_n^4\left(\sum_{k=1}^r\frac{q_{jk}\zeta_k}{(\lambda_k^2+\rho_n)^2}\right)^2 + \left(\sum_{k=1}^rq_{jk}\left(\frac{\lambda_k}{\lambda_k^2 + \rho_n} + \frac{\rho_n\lambda_k}{(\lambda_k^2 + \rho_n)^2}\right)\sum_{l=1}^np_{lk}\epsilon_l\right)^2\right)\\
= O(\vert\mathcal{N}_{b_n}\vert\times n^{2\alpha_\theta - 4\delta}) + 2C_\lambda^2\sum_{j\in\mathcal{N}_{b_n}}\left(\sum_{k=1}^rq_{jk}\left(\frac{\lambda_k}{\lambda_k^2 + \rho_n} + \frac{\rho_n\lambda_k}{(\lambda_k^2 + \rho_n)^2}\right)\sum_{l=1}^np_{lk}\epsilon_l\right)^2
\end{aligned}
\label{SIGMA}
\end{equation}
Since
\begin{equation}
\begin{aligned}
\mathbf{E}\sum_{j\in\mathcal{N}_{b_n}}\left(\sum_{k=1}^rq_{jk}\left(\frac{\lambda_k}{\lambda_k^2 + \rho_n} + \frac{\rho_n\lambda_k}{(\lambda_k^2 + \rho_n)^2}\right)\sum_{l=1}^np_{lk}\epsilon_l\right)^2 = \sigma^2\sum_{j\in\mathcal{N}_{b_n}}\sum_{l=1}^n\left(\sum_{k=1}^rq_{jk}\left(\frac{\lambda_k}{\lambda_k^2 + \rho_n} + \frac{\rho_n\lambda_k}{(\lambda_k^2 + \rho_n)^2}\right)p_{lk}\right)^2\\
=\sigma^2\sum_{j\in\mathcal{N}_{b_n}}\sum_{k=1}^rq_{jk}^2\left(\frac{\lambda_k}{\lambda_k^2 + \rho_n} + \frac{\rho_n\lambda_k}{(\lambda_k^2 + \rho_n)^2}\right)^2\leq \frac{4\sigma^2\vert\mathcal{N}_{b_n}\vert}{\lambda_r^2}
\end{aligned}
\label{SIGMA2}
\end{equation}
We have $\frac{1}{n}\sum_{i=1}^n\left(\sum_{j\in\mathcal{N}_{b_n}}x_{ij}(\widetilde{\theta}_j - \theta_j)\right)^2 = O_p(\vert\mathcal{N}_{b_n}\vert\times n^{2\alpha_\theta - 4\delta} + \vert\mathcal{N}_{b_n}\vert\times n^{-2\eta})$. For the third term, from assumption 6. we have
\begin{equation}
\begin{aligned}
\frac{1}{n}\sum_{i=1}^n\left(\sum_{j\not\in\mathcal{N}_{b_n}}x_{ij}\theta_j\right)^2\leq C_\lambda^2\sum_{j\not\in\mathcal{N}_{b_n}}\theta_j^2\leq C_\lambda^2\times b_n\sum_{j\not\in\mathcal{N}_{b_n}}\vert\theta_j\vert = O(n^{-\alpha_\sigma})
\end{aligned}
\label{XTHETA}
\end{equation}
For the fourth term, from Cauchy inequality and \eqref{SIGMA},
\begin{equation}
\begin{aligned}
\mathbf{E}\frac{1}{n}\vert\sum_{i=1}^n\sum_{j\in\mathcal{N}_{b_n}}\epsilon_ix_{ij}(\widetilde{\theta}_j - \theta_j)\vert\leq \frac{1}{n}\mathbf{E}\sqrt{\sum_{i=1}^n\epsilon_i^2}\times \sqrt{\sum_{i=1}^n(\sum_{j\in\mathcal{N}_{b_n}}x_{ij}(\widetilde{\theta}_j-\theta_j))^2}\\
\leq \sqrt{\frac{\mathbf{E}\sum_{i=1}^n\epsilon_i^2}{n}}\times\sqrt{\frac{1}{n}\mathbf{E}\sum_{i=1}^n(\sum_{j\in\mathcal{N}_{b_n}}x_{ij}(\widetilde{\theta}_j-\theta_j))^2}
=\sigma \times O(\sqrt{\vert\mathcal{N}_{b_n}\vert\times n^{2\alpha_\theta - 4\delta} + \vert\mathcal{N}_{b_n}\vert\times n^{-2\eta}})\\
\Rightarrow \frac{1}{n}\vert\sum_{i=1}^n\sum_{j\in\mathcal{N}_{b_n}}\epsilon_ix_{ij}(\widetilde{\theta}_j - \theta_j)\vert = O_p(\sqrt{\vert\mathcal{N}_{b_n}\vert}\times n^{\alpha_\theta - 2\delta} + \sqrt{\vert\mathcal{N}_{b_n}\vert}\times n^{-\eta})
\end{aligned}
\end{equation}
For the fifth term,
\begin{equation}
\begin{aligned}
\mathbf{E}\vert\frac{1}{n}\sum_{i=1}^n\sum_{j\not\in\mathcal{N}_{b_n}}\epsilon_ix_{ij}\theta_j\vert^2 = \frac{\sigma^2}{n^2}\sum_{i=1}^n\left(\sum_{j\not\in\mathcal{N}_{b_n}}x_{ij}\theta_j\right)^2\leq \frac{\sigma^2C_\lambda^2}{n}\sum_{j\not\in\mathcal{N}_{b_n}}\theta_j^2\Rightarrow \frac{1}{n}\sum_{i=1}^n\sum_{j\not\in\mathcal{N}_{b_n}}\epsilon_ix_{ij}\theta_j = O_p(n^{-(1+\alpha_\sigma)/2})
\end{aligned}
\end{equation}
For the last term,
\begin{equation}
\begin{aligned}
\frac{1}{n}\vert\sum_{i=1}^n\left(\sum_{j\in\mathcal{N}_{b_n}}x_{ij}(\widetilde{\theta}_j - \theta_j)\right)\times\left(\sum_{j\not\in\mathcal{N}_{b_n}}x_{ij}\theta_j\right)\vert\leq C_\lambda^2\sqrt{\sum_{j\in\mathcal{N}_{b_n}}(\widetilde{\theta}_j - \theta_j)^2}\times\sqrt{\sum_{j\not\in\mathcal{N}_{b_n}}\theta_j^2}\\
= O_p(\sqrt{\vert\mathcal{N}_{b_n}\vert}\times n^{\alpha_\theta - 2\delta-\alpha_\sigma/2} + \sqrt{\vert\mathcal{N}_{b_n}\vert}\times n^{-\eta-\alpha_\sigma/2})
\end{aligned}
\end{equation}
From \eqref{SELConst}, $Prob(\widehat{\mathcal{N}}_{b_n}\neq \mathcal{N}_{b_n})\to 0$. So we have
\begin{equation}
\widehat{\sigma}^2 - \sigma^2 = O_p\left(\frac{1}{\sqrt{n}} + \sqrt{\vert\mathcal{N}_{b_n}\vert}\times n^{\alpha_\theta - 2\delta} + \sqrt{\vert\mathcal{N}_{b_n}\vert}\times n^{-\eta} + n^{-\alpha_\sigma}\right)
\end{equation}
From assumption 2 and 6, we prove the second result.
\end{proof}

Define $T = (c_{ik})_{i\in\mathcal{M},k=1,2,...,r}$. From assumption 7, since the matrix $\left(\frac{1}{\tau_i}c_{ik}\left(\frac{\lambda_k}{\lambda_k^2 + \rho_n} + \frac{\rho_n\lambda_k}{(\lambda_k^2 + \rho_n)^2}\right)\right)_{i\in\mathcal{M},j=1,2,...,r} = D_1 T D_2$ with $D_1 = diag(1/\tau_i,i\in\mathcal{M})$ and $D_2 = diag\left(\frac{\lambda_1}{\lambda_1^2 + \rho_n} + \frac{\rho_n\lambda_1}{(\lambda_1^2 + \rho_n)^2},...,\frac{\lambda_r}{\lambda_r^2 + \rho_n} + \frac{\rho_n\lambda_r}{(\lambda_r^2 + \rho_n)^2}\right)$,
the matrix

\noindent$\left(\frac{1}{\tau_i}c_{ik}\left(\frac{\lambda_k}{\lambda_k^2 + \rho_n} + \frac{\rho_n\lambda_k}{(\lambda_k^2 + \rho_n)^2}\right)\right)_{i\in\mathcal{M},j=1,2,...,r}$ also has rank $\vert\mathcal{M}\vert$. The proof of theorem \ref{Thm2} uses this result.

\begin{proof}[Proof of theorem \ref{Thm2}]
From Cauchy inequality and assumption 2, suppose $\delta = \frac{\eta + \alpha_\theta+\delta_1}{2}$ with $\delta_1 > 0$. For $i\in\mathcal{M}$,
\begin{equation}
\begin{aligned}
\vert\sum_{k=1}^r\frac{c_{ik}\zeta_k}{(\lambda_k^2 + \rho_n)^2}\vert\leq \sqrt{\sum_{k=1}^r\frac{c_{ik}^2\lambda_k^2}{(\lambda_k^2+\rho_n)^2}}\times \sqrt{\sum_{k=1}^r\frac{\zeta_k^2}{\lambda_k^2(\lambda_k^2 + \rho_n)^2}}\leq \tau_i\times \frac{\Vert\theta\Vert_2}{\lambda_r^3}
\Rightarrow \max_{i\in\mathcal{M}}\frac{\rho_n^2}{\tau_i}\vert\sum_{k=1}^r\frac{c_{ik}\zeta_k}{(\lambda_k^2 + \rho_n)^2}\vert = O(n^{-\delta_1})
\end{aligned}
\label{DIVI}
\end{equation}
Define $t_{il} = \frac{1}{\tau_i}\times \sum_{k=1}^rc_{ik}p_{lk}\left(\frac{\lambda_k}{\lambda_k^2 + \rho_n} + \frac{\rho_n\lambda_k}{(\lambda_k^2 + \rho_n)^2}\right)$ for $i\in\mathcal{M}$ and $l=1,2,...,n$. From \eqref{EXPAN}, \eqref{DefVar}, \eqref{GammaExpan} and assumption 5, if $\widehat{\mathcal{N}}_{b_n} = \mathcal{N}_{b_n}$, we have $\widehat{\tau}_i = \tau_i\geq 1/\sqrt{n}$ and $\exists$ a constant $C>0$, for any $a > 0$ and sufficiently large $n$,
\begin{equation}
\begin{aligned}
\max_{i=1,2,...,p_1}\frac{\vert\widehat{\gamma}_i - \gamma_i\vert}{\widehat{\tau}_i}
\leq \max_{i\in\mathcal{M}}\frac{\rho_n^2}{\tau_i}\vert\sum_{k=1}^r \frac{c_{ik}\zeta_k}{(\lambda_k^2 + \rho_n)^2}\vert + \max_{i\in\mathcal{M}}\vert\sum_{l=1}^nt_{il}\epsilon_l\vert +\max_{i=1,2,...,p_1}\frac{\vert\sum_{j\not\in\mathcal{N}_{b_n}}m_{ij}\theta_j\vert}{\tau_i} + \max_{i=1,2,...,p_1}\frac{\vert\sum_{j=1}^p m_{ij}\theta_{\perp,j}\vert}{\tau_i}\\
\leq \max_{i\in\mathcal{M}}\vert\sum_{l=1}^nt_{il}\epsilon_l\vert + Cn^{-\delta_1} + \frac{a}{\sqrt{\log(n)}}\\
\max_{i=1,2,...,p_1}\frac{\vert\widehat{\gamma}_i - \gamma_i\vert}{\widehat{\tau}_i}\geq \max_{i\in\mathcal{M}}\vert\sum_{l=1}^nt_{il}\epsilon_l\vert -\max_{i\in\mathcal{M}}\frac{\rho_n^2}{\tau_i}\vert\sum_{k=1}^r \frac{c_{ik}\zeta_k}{(\lambda_k^2 + \rho_n)^2}\vert - \max_{i=1,2,...,p_1}\frac{\vert\sum_{j\not\in\mathcal{N}_{b_n}}m_{ij}\theta_j\vert}{\tau_i} -  \max_{i=1,2,...,p_1}\frac{\vert\sum_{j=1}^p m_{ij}\theta_{\perp,j}\vert}{\tau_i}\\
\geq \max_{i\in\mathcal{M}}\vert\sum_{l=1}^nt_{il}\epsilon_l\vert - Cn^{-\delta_1} - \frac{a}{\sqrt{\log(n)}}\\
\end{aligned}
\end{equation}

According to theorem \ref{THM_CON} and lemma \ref{thm1}, $\exists$ a constant $C$ and for any given $a>0$, for sufficiently large $n$ and any $x\geq 0$,
\begin{equation}
\begin{aligned}
Prob\left(\max_{i=1,2,...,p_1}\frac{\vert\widehat{\gamma}_i - \gamma_i\vert}{\widehat{\tau}_i}\leq x\right)\leq Prob\left(\max_{i=1,2,...,p_1}\frac{\vert\widehat{\gamma}_i - \gamma_i\vert}{\widehat{\tau}_i}\leq x\cap\widehat{\mathcal{N}}_{b_n} = \mathcal{N}_{b_n}\right) + Prob\left(\widehat{\mathcal{N}}_{b_n} \neq \mathcal{N}_{b_n}\right)\\
\leq Prob\left(\max_{i\in\mathcal{M}}\vert\sum_{l=1}^nt_{il}\epsilon_l\vert\leq x +  Cn^{-\delta_1} + \frac{a}{\sqrt{\log(n)}}\right) + Cn^{\alpha_p + m\nu_b - m\eta}\\
\leq Prob\left(\max_{i\in\mathcal{M}}\vert\sum_{l=1}^nt_{il}\epsilon_l^*\vert\leq x\right)
+ Cn^{\alpha_p + m\nu_b - m\eta} + \sup_{x\geq 0}\vert Prob\left(\max_{i\in\mathcal{M}}\vert\sum_{l=1}^nt_{il}\epsilon_l\vert\leq x\right) - Prob\left(\max_{i\in\mathcal{M}}\vert\sum_{l=1}^nt_{il}\epsilon_l^*\vert\leq x\right)\vert\\
+\sup_{x\in\mathbf{R}}\left( Prob\left(\max_{i\in\mathcal{M}}\vert\sum_{l=1}^nt_{il}\epsilon_l^*\vert\leq x +  Cn^{-\delta_1} + \frac{a}{\sqrt{\log(n)}}\right) - Prob\left(\max_{i\in\mathcal{M}}\vert\sum_{l=1}^nt_{il}\epsilon_l^*\vert\leq x\right)\right)\\
Prob\left(\max_{i=1,2,...,p_1}\frac{\vert\widehat{\gamma}_i - \gamma_i\vert}{\widehat{\tau}_i}\leq x\right)\geq Prob\left(\max_{i=1,2,...,p_1}\frac{\vert\widehat{\gamma}_i - \gamma_i\vert}{\widehat{\tau}_i}\leq x\cap\widehat{\mathcal{N}}_{b_n} = \mathcal{N}_{b_n}\right)\\
\geq Prob\left(\max_{i\in\mathcal{M}}\vert\sum_{l=1}^nt_{il}\epsilon_l\vert\leq x -  Cn^{-\delta_1} - \frac{a}{\sqrt{\log(n)}}\right) - Prob\left(\widehat{\mathcal{N}}_{b_n} \neq \mathcal{N}_{b_n}\right)\\
\geq Prob\left(\max_{i\in\mathcal{M}}\vert\sum_{l=1}^nt_{il}\epsilon_l^*\vert\leq x \right) - Cn^{\alpha_p + m\nu_b - m\eta} \\
- \sup_{x\in\mathbf{R}}\left(Prob\left(\max_{i\in\mathcal{M}}\vert\sum_{l=1}^nt_{il}\epsilon_l^*\vert\leq x\right) - Prob\left(\max_{i\in\mathcal{M}}\vert\sum_{l=1}^nt_{il}\epsilon_l^*\vert\leq x -  Cn^{-\delta_1} - \frac{a}{\sqrt{\log(n)}}\right)\right) \\
- \vert Prob\left(\max_{i\in\mathcal{M}}\vert\sum_{l=1}^nt_{il}\epsilon_l\vert\leq x -  Cn^{-\delta_1} - \frac{a}{\sqrt{\log(n)}}\right) - Prob\left(\max_{i\in\mathcal{M}}\vert\sum_{l=1}^nt_{il}\epsilon_l^*\vert\leq x -  Cn^{-\delta_1} - \frac{a}{\sqrt{\log(n)}}\right)\vert
\end{aligned}
\label{Jp1}
\end{equation}
From assumption 1, 2, 5 and 7, for sufficiently large $n$ we have
\begin{equation}
\begin{aligned}
\max_{i\in\mathcal{M}}\mathbf{E}\left(\sum_{l=1}^nt_{il}\epsilon_l^*\right)^2 = \sigma^2\max_{i\in\mathcal{M}}\sum_{l=1}^n t_{il}^2 = \sigma^2\max_{i\in\mathcal{M}}\frac{\sum_{k=1}^rc_{ik}^2\left(\frac{\lambda_k}{\lambda_k^2 + \rho_n} + \frac{\rho_n\lambda_k}{(\lambda_k^2 + \rho_n)^2}\right)^2}{\tau_i^2}\leq \sigma^2\\
\min_{i\in\mathcal{M}}\mathbf{E}\left(\sum_{l=1}^nt_{il}\epsilon_l^*\right)^2 =  \sigma^2\min_{i\in\mathcal{M}}\frac{1}{1 + \frac{1}{n\sum_{k=1}^rc_{ik}^2\left(\frac{\lambda_k}{\lambda_k^2 + \rho_n} + \frac{\rho_n\lambda_k}{(\lambda_k^2 + \rho_n)^2}\right)^2}}\geq \sigma^2\min_{i\in\mathcal{M}}\frac{1}{1 + \frac{1}{n\sum_{k=1}^rc_{ik}^2\frac{\lambda_k^2}{(\lambda_k^2 + \rho_n)^2}}}\geq \frac{\sigma^2}{1 + \frac{4C_\lambda^2}{c_\mathcal{M}}} > 0\\
\end{aligned}
\label{SigmaBound}
\end{equation}
and $(t_{il})_{i\in\mathcal{M},l=1,2,...,n} = D_1 T D_2 P^T$, here $T = (c_{ik})_{i\in\mathcal{M},k=1,2,...,r}$, $D_1 = diag(1/\tau_i,i\in\mathcal{M})$, and

\noindent$D_2 = diag\left(\frac{\lambda_1}{\lambda_1^2 + \rho_n} + \frac{\rho_n\lambda_1}{(\lambda_1^2 + \rho_n)^2},...,\frac{\lambda_r}{\lambda_r^2 + \rho_n} + \frac{\rho_n\lambda_r}{(\lambda_r^2 + \rho_n)^2}\right)$. So $(t_{il})_{i\in\mathcal{M},l=1,2,...,n}$ has full rank(rank $\vert\mathcal{M}\vert$). From lemma \ref{CI_lemma_1}, $\exists$ a constant $C^{'}$ which only depends on $\sigma,c_\mathcal{M},C_\lambda$ such that
\begin{equation}
\begin{aligned}
\sup_{x\in\mathbf{R}}\left(Prob\left(\max_{i\in\mathcal{M}}\vert\sum_{l=1}^nt_{il}\epsilon_l^*\vert\leq x + Cn^{-\delta_1} + \frac{a}{\sqrt{\log(n)}}\right) - Prob\left(\max_{i\in\mathcal{M}}\vert\sum_{l=1}^nt_{il}\epsilon_l^*\vert\leq x\right)\right)\\
\leq C^{'}\left(Cn^{-\delta_1} + \frac{a}{\sqrt{\log(n)}}\right)\times\left(1 + \sqrt{\log(\vert\mathcal{M}\vert)} + \sqrt{\vert\log(Cn^{-\delta_1} + \frac{a}{\sqrt{\log(n)}})\vert}\right)
\end{aligned}
\end{equation}
For sufficiently large $n$, we have $Cn^{-\delta_1} + \frac{a}{\sqrt{\log(n)}} < 1$ and
\begin{equation}
\begin{aligned}
\vert\log(Cn^{-\delta_1} + \frac{a}{\sqrt{\log(n)}})\vert\leq \log(\frac{\sqrt{\log(n)}}{a}) = \frac{\log(\log(n))}{2} - \log(a) \leq \log(\log(n))\\
\Rightarrow \sup_{x\in\mathbf{R}}\left(Prob\left(\max_{i\in\mathcal{M}}\vert\sum_{l=1}^nt_{il}\epsilon_l^*\vert\leq x + Cn^{-\delta_1} + \frac{a}{\sqrt{\log(n)}}\right) - Prob\left(\max_{i\in\mathcal{M}}\vert\sum_{l=1}^nt_{il}\epsilon_l^*\vert\leq x\right)\right)\\
\leq C^{'}\left(Cn^{-\delta_1} + \frac{a}{\sqrt{\log(n)}}\right)\times \left(1 + \sqrt{\log(n)} + \sqrt{\log(\log(n))}\right)\leq 6C^{'}a
\end{aligned}
\end{equation}
From assumption 7, \eqref{SigmaBound} and lemma \ref{thm_1}, we have
\begin{equation}
\sup_{x\geq 0}\vert Prob\left(\max_{i\in\mathcal{M}}\vert\sum_{l=1}^nt_{il}\epsilon_l\vert\leq x\right) - Prob\left(\max_{i\in\mathcal{M}}\vert\sum_{l=1}^nt_{il}\epsilon_l^*\vert\leq x\right)\vert < a\ \text{for sufficiently large $n$}
\label{Jp6}
\end{equation}
If $x < Cn^{-\delta_1} + \frac{a}{\sqrt{\log(n)}}$, then $Prob\left(\max_{i\in\mathcal{M}}\vert\sum_{l=1}^nt_{il}\epsilon_l\vert\leq x -  Cn^{-\delta_1} - \frac{a}{\sqrt{\log(n)}}\right) = 0$ and

\noindent $Prob\left(\max_{i\in\mathcal{M}}\vert\sum_{l=1}^nt_{il}\epsilon_l^*\vert\leq x -  Cn^{-\delta_1} - \frac{a}{\sqrt{\log(n)}}\right) = 0$. Combine with \eqref{Jp1} to \eqref{Jp6}, we have
\begin{equation}
\sup_{x\geq 0}\vert Prob\left(\max_{i=1,2,...,p_1}\frac{\vert\widehat{\gamma}_i - \gamma_i\vert}{\widehat{\tau}_i}\leq x\right) - Prob\left(\max_{i\in\mathcal{M}}\vert\sum_{l=1}^nt_{il}\epsilon_l^*\vert\leq x\right)\vert\leq Cn^{\alpha_p + m\nu_b - m\eta} + 6C^{'}a + a
\end{equation}
and we prove \eqref{eqCLT}.
\end{proof}
Define $c_{1-\alpha}$ as the $1-\alpha$ quantile of $H$. The density of a multivariate normal random variable with a full rank covariance matrix is positive for $\forall x\in\mathcal{R}^{\vert\mathcal{M}\vert}$. And $\forall x\geq 0,\ \delta > 0$, the set $\{t = (t_i , i\in\mathcal{M})|\ x<\max_{i=1,2,...,\vert\mathcal{M}\vert}\vert t_i\vert\leq x + \delta\}$ has positive Lebesgue measure. Therefore, $H(x)$ is strictly increasing, and for any $0<\alpha < 1$, $H(c_{1-\alpha}) = 1 -\alpha$. From theorem \ref{Thm2}, for any given $0<\alpha_0<\alpha_1<1$,
\begin{equation}
\sup_{\alpha_0\leq\alpha\leq \alpha_1}\vert Prob\left(\max_{i=1,2,...,p_1}\frac{\vert\widehat{\gamma}_i - \gamma_i\vert}{\widehat{\tau}_i}\leq c_{1-\alpha}\right) - (1-\alpha)\vert\leq \sup_{x\geq 0}\vert Prob\left(\max_{i=1,2,...,p_1}\frac{\vert\widehat{\gamma}_i - \gamma_i\vert}{\widehat{\tau}_i}\leq x\right) - H(x)\vert\to 0
\end{equation}
as $n\to\infty$.

\section{Proofs of theorems in section \ref{BOOT}}

\begin{proof}[Proof of theorem \ref{BootCons}]
According to theorem \ref{thm1}, $Prob\left(\widehat{\mathcal{N}}_{b_n}\neq \mathcal{N}_{b_n}\right) = O(n^{\alpha_p + m\nu_b - m\eta})$. If $\widehat{\mathcal{N}}_{b_n} = \mathcal{N}_{b_n}$, from \eqref{EXPAN}
\begin{equation}
\begin{aligned}
\Vert\widehat{\theta}\Vert_2^2 = \sum_{i\in\mathcal{N}_{b_n}}\widetilde{\theta}_i^2\leq 3\sum_{i\in\mathcal{N}_{b_n}}\vert\theta_i\vert^2 + 3\rho_n^4\sum_{i\in\mathcal{N}_{b_n}}\left(\sum_{j=1}^r\frac{q_{ij}\zeta_j}{(\lambda_j^2 + \rho_n)^2}\right)^2 + 3\sum_{i\in\mathcal{N}_{b_n}}\left(\sum_{j=1}^r\sum_{l=1}^nq_{ij}\left(\frac{\lambda_j}{\lambda_j^2 + \rho_n} + \frac{\rho_n\lambda_j}{(\lambda_j^2 + \rho_n)^2}\right)p_{lj}\epsilon_l\right)^2\\
\end{aligned}
\label{Var1}
\end{equation}
From assumption 2, $\sum_{i\in\mathcal{N}_{b_n}}\vert\theta_i\vert^2\leq \Vert\theta\Vert_2^2 = O(n^{2\alpha_\theta})$. Similarly
\begin{equation}
\begin{aligned}
\rho_n^4\sum_{i\in\mathcal{N}_{b_n}}\left(\sum_{j=1}^r\frac{q_{ij}\zeta_j}{(\lambda_j^2 + \rho_n)^2}\right)^2\leq\frac{\rho_n^4}{\lambda_r^8}\sum_{i\in\mathcal{N}_{b_n}}\sum_{j=1}^rq_{ij}^2\sum_{j=1}^r\zeta_j^2 = \frac{\rho_n^4\times\vert\mathcal{N}_{b_n}\vert\times\Vert\theta\Vert_2^2}{\lambda_r^8} = o(n^{-2\alpha_\sigma})
\end{aligned}
\end{equation}

From assumption 6,
\begin{equation}
\begin{aligned}
\mathbf{E}\sum_{i\in\mathcal{N}_{b_n}}\left(\sum_{j=1}^r\sum_{l=1}^nq_{ij}\left(\frac{\lambda_j}{\lambda_j^2 + \rho_n} + \frac{\rho_n\lambda_j}{(\lambda_j^2 + \rho_n)^2}\right)p_{lj}\epsilon_l\right)^2 = \sigma^2\sum_{i\in\mathcal{N}_{b_n}}\sum_{l=1}^n\left(\sum_{j=1}^r q_{ij}\left(\frac{\lambda_j}{\lambda_j^2 + \rho_n} + \frac{\rho_n\lambda_j}{(\lambda_j^2 + \rho_n)^2}\right)p_{lj}\right)^2\\
=\sigma^2 \sum_{i\in\mathcal{N}_{b_n}}\sum_{j=1}^r q_{ij}^2\left(\frac{\lambda_j}{\lambda_j^2 + \rho_n} + \frac{\rho_n\lambda_j}{(\lambda_j^2 + \rho_n)^2}\right)^2\leq \frac{4\sigma^2\vert\mathcal{N}_{b_n}\vert}{\lambda_r^2}
\Rightarrow \sum_{i\in\mathcal{N}_{b_n}}\left(\sum_{j=1}^r\sum_{l=1}^nq_{ij}\left(\frac{\lambda_j}{\lambda_j^2 + \rho_n} + \frac{\rho_n\lambda_j}{(\lambda_j^2 + \rho_n)^2}\right)p_{lj}\epsilon_l\right)^2 = O_p\left(n^{-2\alpha_\sigma}\right)
\end{aligned}
\label{Var3}
\end{equation}

Since $\alpha_\theta,\alpha_\sigma\geq 0$, $\Vert\widehat{\theta}\Vert_2 = O_p(n^{\alpha_\theta})$.
According to \eqref{MainStat} and \eqref{EXPAN}, define $\widehat{\zeta} = Q^T\widehat{\theta}$,
\begin{equation}
\begin{aligned}
\widetilde{\theta}^* - \widehat{\theta} = \left(I_p + \rho_nQ(\Lambda^2 + \rho_nI_r)^{-1}Q^T\right)Q(\Lambda^2 + \rho_n I_r)^{-1}\left(\Lambda^2 Q^T\widehat{\theta} + \Lambda P^T\epsilon^*\right) + \widehat{\theta}_\perp - QQ^T\widehat{\theta} - Q_\perp Q^T_\perp \widehat{\theta}\\
\Rightarrow \widetilde{\theta}^*_i - \widehat{\theta}_i = -\rho_n^2\sum_{j=1}^r\frac{q_{ij}\widehat{\zeta}_j}{(\lambda_j^2 + \rho_n)^2} + \sum_{j = 1}^r\sum_{l=1}^n q_{ij}\left(\frac{\lambda_j}{\lambda_j^2 + \rho_n} + \frac{\rho_n\lambda_j}{(\lambda_j^2 + \rho_n)^2}\right)p_{lj}\epsilon^*_l
\end{aligned}
\end{equation}
Similar to \eqref{Delta2}, rewrite $\delta$ in assumption 2 as $\delta = \frac{\eta + \alpha_\theta + \delta_1}{2}$ with $\delta_1 > 0$, we have
\begin{equation}
\max_{i=1,2,...,p}\vert\rho_n^2\sum_{j=1}^r\frac{q_{ij}\widehat{\zeta}_j}{(\lambda_j^2 + \rho_n)^2}\vert\leq \max_{i=1,2,...,p}\frac{\rho_n^2}{\lambda_r^4}\sqrt{\sum_{j=1}^r q_{ij}^2}\times \sqrt{\sum_{j=1}^r\widehat{\zeta}_j^2}\leq \frac{\rho_n^2\Vert\widehat{\theta}\Vert_2}{c_\lambda^4n^{4\eta}} = O_p\left(n^{-\eta-\delta_1}\right)
\end{equation}
$\epsilon^*_i|\epsilon,i=1,2,...,n$ are normal random variables with mean $0$ and variance $\widehat{\sigma}^2$. Therefore $\mathbf{E}^*\vert\epsilon^*_1\vert^m = \widehat{\sigma}^mD$, $D = \mathbf{E}\vert Y\vert^m$ with $Y$ a normal random variable with mean $0$ and variance $1$. If $\widehat{\sigma} > 0$, from \eqref{ETA} and lemma \ref{lemma_Se}, $\exists$ a constant $E$ which depends on $m$ and $D$ such that for any $a>0$,
\begin{equation}
\begin{aligned}
Prob^*\left(\max_{i=1,2,...,p}\vert \sum_{j = 1}^r\sum_{l=1}^n q_{ij}\left(\frac{\lambda_j}{\lambda_j^2 + \rho_n} + \frac{\rho_n\lambda_j}{(\lambda_j^2 + \rho_n)^2}\right)p_{lj}\frac{\epsilon^*_l}{\widehat{\sigma}}\vert > \frac{a}{\widehat{\sigma}}\right)\leq \frac{pE\widehat{\sigma}^m}{\lambda_r^ma^m}
\end{aligned}
\end{equation}
Suppose $\widehat{\mathcal{N}}_{b_n} = \mathcal{N}_{b_n}$, $\frac{\sigma}{2}<\widehat{\sigma} < \frac{3\sigma}{2}$, and $\max_{i=1,2,...,p}\vert\rho_n^2\sum_{j=1}^r\frac{q_{ij}\widehat{\zeta}_j}{(\lambda_j^2 + \rho_n)^2}\vert\leq C\times n^{-\eta-\delta_1}$ for a constant $C$. Since $\widehat{\theta}_i = 0$ if $i\not\in\widehat{\mathcal{N}}_{b_n}$,
\begin{equation}
\begin{aligned}
Prob^*\left(\widehat{\mathcal{N}}_{b_n}^*\neq\mathcal{N}_{b_n}\right)\leq Prob^*\left(\min_{i\in\mathcal{N}_{b_n}}\vert\widetilde{\theta}_i^*\vert\leq b_n\right) + Prob^*\left(\max_{i\not\in\mathcal{N}_{b_n}}\vert\widetilde{\theta}_i^*\vert > b_n\right)\\
\leq Prob^*\left(\min_{i\in\mathcal{N}_{b_n}}\vert\widehat{\theta}_i\vert - \max_{i\in\mathcal{N}_{b_n}}\vert\rho_n^2\sum_{j=1}^r\frac{q_{ij}\widehat{\zeta}_j}{(\lambda_j^2 + \rho_n)^2}\vert - b_n \leq \max_{i\in\mathcal{N}_{b_n}}\vert \sum_{j = 1}^r\sum_{l=1}^n q_{ij}\left(\frac{\lambda_j}{\lambda_j^2 + \rho_n} + \frac{\rho_n\lambda_j}{(\lambda_j^2 + \rho_n)^2}\right)p_{lj}\epsilon^*_l\vert\right)\\
+ Prob^*\left(\max_{i\not\in\mathcal{N}_{b_n}}\vert \sum_{j = 1}^r\sum_{l=1}^n q_{ij}\left(\frac{\lambda_j}{\lambda_j^2 + \rho_n} + \frac{\rho_n\lambda_j}{(\lambda_j^2 + \rho_n)^2}\right)p_{lj}\epsilon^*_l\vert > b_n - \rho_n^2\max_{i\not\in\mathcal{N}_{b_n}}\vert \sum_{j=1}^r\frac{q_{ij}\widehat{\zeta}_j}{(\lambda_j^2 + \rho_n)^2}\vert\right)
\end{aligned}
\end{equation}
From assumption 4, for sufficiently large $n$,
\begin{equation}
\begin{aligned}
b_n - \rho_n^2\max_{i\not\in\mathcal{N}_{b_n}}\vert \sum_{j=1}^r\frac{q_{ij}\widehat{\zeta}_j}{(\lambda_j^2 + \rho_n)^2}\vert \geq C_bn^{-\nu_b} - Cn^{-\eta-\delta_1}\geq \frac{b_n}{2}
\end{aligned}
\end{equation}
From \eqref{ETA}, lemma \ref{lemma_Se}, assumption 1 and 4, we have
\begin{equation}
\begin{aligned}
\max_{i=1,2,...,p}\vert \sum_{j=1}^rq_{ij}\left(\frac{\lambda_j}{\lambda_j^2 + \rho_n} + \frac{\rho_n\lambda_j}{(\lambda_j^2 + \rho_n)^2}\right)\sum_{l=1}^np_{lj}\epsilon_l\vert = O_p\left(n^{\alpha_p/m - \eta}\right)
\end{aligned}
\end{equation}
Suppose a constant $C$ such that $ \max_{i=1,2,...,p}\vert \sum_{j=1}^rq_{ij}\left(\frac{\lambda_j}{\lambda_j^2 + \rho_n} + \frac{\rho_n\lambda_j}{(\lambda_j^2 + \rho_n)^2}\right)\sum_{l=1}^np_{lj}\epsilon_l\vert\leq Cn^{\alpha_p/m - \eta}$(from lemma \ref{lemma_Se}), and (since $\frac{\rho_n^2\Vert\theta\Vert_2}{\lambda_r^4} = O(n^{-\eta-\delta_1})$) $\frac{\rho_n^2\Vert\theta\Vert_2}{\lambda_r^4}\leq Cn^{-\eta-\delta_1}$. From assumption 4, for sufficiently large $n$,
\begin{equation}
\begin{aligned}
\min_{i\in\mathcal{N}_{b_n}}\vert\widehat{\theta}_i\vert\geq \min_{i\in\mathcal{N}_{b_n}}\vert\theta_i\vert - \max_{i\in\mathcal{N}_{b_n}}\rho_n^2\vert\sum_{j=1}^r\frac{q_{ij}\zeta_j}{(\lambda_j^2 + \rho_n)^2}\vert - \max_{i\in\mathcal{N}_{b_n}}\vert\sum_{j=1}^rq_{ij}\left(\frac{\lambda_j}{\lambda_j^2 + \rho_n} + \frac{\rho_n\lambda_j}{(\lambda_j^2 + \rho_n)^2}\right)\sum_{l=1}^np_{lj}\epsilon_l\vert\\
\geq \frac{b_n}{c_b} - \frac{\rho_n^2\Vert\theta\Vert_2}{\lambda_r^4} - Cn^{\alpha_p/m - \eta}
\Rightarrow \min_{i\in\mathcal{N}_{b_n}}\vert\widehat{\theta}_i\vert - \max_{i\in\mathcal{N}_{b_n}}\vert\rho_n^2\sum_{j=1}^r\frac{q_{ij}\widehat{\zeta}_j}{(\lambda_j^2 + \rho_n)^2}\vert - b_n\\
\geq \left(\frac{1}{c_b} - 1\right)b_n - Cn^{-\eta-\delta_1} - Cn^{\alpha_p/m-\eta} - Cn^{-\eta-\delta_1}
> \frac{b_n}{2}\left(\frac{1}{c_b} - 1\right)
\end{aligned}
\end{equation}
Correspondingly
\begin{equation}
\begin{aligned}
Prob^*\left(\widehat{\mathcal{N}}_{b_n}^*\neq\mathcal{N}_{b_n}\right)\leq Prob^*\left(\max_{i\in\mathcal{N}_{b_n}}\vert \sum_{j = 1}^r\sum_{l=1}^n q_{ij}\left(\frac{\lambda_j}{\lambda_j^2 + \rho_n} + \frac{\rho_n\lambda_j}{(\lambda_j^2 + \rho_n)^2}\right)p_{lj}\epsilon^*_l\vert> \frac{b_n}{2}(\frac{1}{c_b} - 1)\right)\\
+Prob^*\left(\max_{i\not\in\mathcal{N}_{b_n}}\vert \sum_{j = 1}^r\sum_{l=1}^n q_{ij}\left(\frac{\lambda_j}{\lambda_j^2 + \rho_n} + \frac{\rho_n\lambda_j}{(\lambda_j^2 + \rho_n)^2}\right)p_{lj}\epsilon^*_l\vert > b_n/2\right)
\leq \frac{pE\widehat{\sigma}^m}{c_\lambda^mn^{m\eta}b_n^m}\times \left(\frac{2^m}{(1/c_b - 1)^m} + 2^m\right)
\end{aligned}
\label{StarVar}
\end{equation}
which has order $O_p(n^{\alpha_p + m\nu_b-m\eta})$. If $\widehat{\mathcal{N}}^*_{b_n} = \mathcal{N}_{b_n}$, then $\widehat{\tau}^*_i = \tau_i$ for $i=1,2,...,p_1$. Similar to \eqref{DIVI},
\begin{equation}
\begin{aligned}
\max_{i=1,2,...,p_1}\frac{\vert\widehat{\gamma}^*_i - \widehat{\gamma}_i\vert}{\widehat{\tau}_i^*} = \max_{i=1,2,...,p_1}\frac{\vert -\rho_n^2\sum_{k=1}^r\frac{c_{ik}\widehat{\zeta}_k}{(\lambda_k^2 + \rho_n)^2}  + \sum_{l=1}^n\sum_{k=1}^rc_{ik}\left(\frac{\lambda_k}{\lambda_k^2 + \rho_n} + \frac{\rho_n\lambda_k}{(\lambda_k^2 + \rho_n)^2}\right)p_{lk}\epsilon^*_l\vert}{\tau_i}\\
\leq \max_{i\in\mathcal{M}}\rho_n^2\frac{\vert\sum_{k=1}^r\frac{c_{ik}\widehat{\zeta}_k}{(\lambda_k^2 + \rho_n)^2}\vert}{\tau_i} + \max_{i\in\mathcal{M}}\frac{\vert \sum_{l=1}^n\sum_{k=1}^rc_{ik}\left(\frac{\lambda_k}{\lambda_k^2 + \rho_n} + \frac{\rho_n\lambda_k}{(\lambda_k^2 + \rho_n)^2}\right)p_{lk}\epsilon^*_l\vert}{\tau_i}\\
\leq \frac{\rho_n^2\Vert\widehat{\theta}\Vert_2}{\lambda_r^3} + \max_{i\in\mathcal{M}}\frac{\vert \sum_{l=1}^n\sum_{k=1}^rc_{ik}\left(\frac{\lambda_k}{\lambda_k^2 + \rho_n} + \frac{\rho_n\lambda_k}{(\lambda_k^2 + \rho_n)^2}\right)p_{lk}\epsilon^*_l\vert}{\tau_i}\\
\max_{i=1,2,...,p_1}\frac{\vert\widehat{\gamma}^*_i - \widehat{\gamma}_i\vert}{\widehat{\tau}_i^*}\geq \max_{i\in\mathcal{M}}\frac{\vert \sum_{l=1}^n\sum_{k=1}^rc_{ik}\left(\frac{\lambda_k}{\lambda_k^2 + \rho_n} + \frac{\rho_n\lambda_k}{(\lambda_k^2 + \rho_n)^2}\right)p_{lk}\epsilon^*_l\vert}{\tau_i} - \frac{\rho_n^2\Vert\widehat{\theta}\Vert_2}{\lambda^3_r}
\end{aligned}
\label{StarJ1}
\end{equation}

From theorem \ref{thm1}, for any $a > 0$ , $\exists$ constant $D_a$ such that $\vert\widehat{\sigma}^2 - \sigma^2\vert\leq D_an^{-\alpha_\sigma}$ and $\frac{1}{2}\sigma <\widehat{\sigma}<\frac{3}{2} \sigma$ with probability $1 - a$,
\begin{equation}
\vert\sigma - \widehat{\sigma}\vert = \frac{\vert\sigma^2 - \widehat{\sigma}^2\vert}{\sigma + \widehat{\sigma}}\leq \frac{D_an^{-\alpha_\sigma}}{\sigma}
\end{equation}
If $0< x \leq n^{\alpha_\sigma / 2}$, according to lemma \ref{CI_lemma_1}, assumption 7 and \eqref{SigmaBound}, $\exists$ a constant $C^{'}$ which only depends on $\sigma,c_\mathcal{M},C_\lambda$ such that
\begin{equation}
\begin{aligned}
\vert Prob^*\left(\max_{i\in\mathcal{M}}\frac{\vert \sum_{l=1}^n\sum_{k=1}^rc_{ik}\left(\frac{\lambda_k}{\lambda_k^2 + \rho_n} + \frac{\rho_n\lambda_k}{(\lambda_k^2 + \rho_n)^2}\right)p_{lk}\epsilon^*_l\vert}{\tau_i}\leq x\right) - H(x)\vert = \vert H(\frac{x\sigma}{\widehat{\sigma}}) - H(x)\vert\\
\leq C^{'}\left(1 + \sqrt{\log(\vert\mathcal{M}\vert)} + \sqrt{\vert\log(\frac{x\vert\sigma - \widehat{\sigma}\vert}{\widehat{\sigma}})\vert}\right)\frac{x\vert\sigma - \widehat{\sigma}\vert}{\widehat{\sigma}}\\
\leq \frac{2D_aC^{'}}{\sigma^2}\left(1 + \sqrt{\log(n)}\right)n^{-\alpha_\sigma / 2} + C^{'}\sqrt{\frac{x\vert\sigma - \widehat{\sigma}\vert}{\widehat{\sigma}}\vert\log( \frac{x\vert\sigma - \widehat{\sigma}\vert}{\widehat{\sigma}})\vert}\times \sqrt{\frac{2D_a}{\sigma^2}}n^{-\alpha_\sigma/4}
\end{aligned}
\label{Sep1}
\end{equation}
Function $x\log(x)$ is continuous when $x > 0$, $x\log(x)\to 0$ as $x\to 0$, and $\frac{x\vert\sigma - \widehat{\sigma}\vert}{\widehat{\sigma}}\leq \frac{2D_an^{-\alpha_\sigma/2}}{\sigma^2}\to 0$ as $n\to\infty$. So $\sqrt{\frac{x\vert\sigma - \widehat{\sigma}\vert}{\widehat{\sigma}}\vert\log( \frac{x\vert\sigma - \widehat{\sigma}\vert}{\widehat{\sigma}})\vert}\leq \sup_{x\in(0,1]}\sqrt{\vert x\log(x)\vert} < \infty$ for sufficiently large $n$.

On the other hand, if $x  > n^{\alpha_\sigma / 2}$, then $\frac{x\sigma}{\widehat{\sigma}} > \frac{2n^{\alpha_\sigma/2}}{3}$. From lemma \ref{lemma_Se}, we may choose sufficiently large $m_1$ such that $m_1\alpha_\sigma / 2 > 2$, since $\mathbf{E}\vert\xi_1\vert^{m_1}<\infty$(Here $\xi_1$ is a normal random variable with mean $0$ and variance $\sigma^2$) is a constant for given $m_1$ and $\max_{i\in\mathcal{M}}\sum_{k=1}^r \frac{1}{\tau_i^2}c_{ik}^2\left(\frac{\lambda_k}{\lambda_k^2 + \rho_n} + \frac{\rho_n\lambda_k}{(\lambda_k^2 + \rho_n)^2}\right)^2\leq 1$, we have
\begin{equation}
\begin{aligned}
Prob\left(\max_{i\in\mathcal{M}}\frac{\vert\sum_{k=1}^r c_{ik}\left(\frac{\lambda_k}{\lambda_k^2 + \rho_n} + \frac{\rho_n\lambda_k}{(\lambda_k^2 + \rho_n)^2}\right)\xi_k\vert}{\tau_i} > \frac{2n^{\alpha_\sigma / 2}}{3}\right)\leq \frac{3^{m_1}\vert\mathcal{M}\vert\times E}{2^{m_1}n^{m_1\alpha_\sigma / 2}}\\
\Rightarrow \vert Prob^*\left(\max_{i\in\mathcal{M}}\frac{\vert \sum_{l=1}^n\sum_{k=1}^rc_{ik}\left(\frac{\lambda_k}{\lambda_k^2 + \rho_n} + \frac{\rho_n\lambda_k}{(\lambda_k^2 + \rho_n)^2}\right)p_{lk}\epsilon^*_l\vert}{\tau_i}\leq x\right) - H(x)\vert\\
\leq Prob\left(\max_{i\in\mathcal{M}}\frac{\vert\sum_{k=1}^r c_{ik}\left(\frac{\lambda_k}{\lambda_k^2 + \rho_n} + \frac{\rho_n\lambda_k}{(\lambda_k^2 + \rho_n)^2}\right)\xi_k\vert}{\tau_i} > \frac{2n^{\alpha_\sigma / 2}}{3}\right) + Prob\left(\max_{i\in\mathcal{M}}\frac{\vert\sum_{k=1}^r c_{ik}\left(\frac{\lambda_k}{\lambda_k^2 + \rho_n} + \frac{\rho_n\lambda_k}{(\lambda_k^2 + \rho_n)^2}\right)\xi_k\vert}{\tau_i} > n^{\alpha_\sigma / 2}\right)\\
\leq 2\times \frac{3^{m_1}\vert\mathcal{M}\vert\times E}{2^{m_1}n^{m_1\alpha_\sigma / 2}}
\end{aligned}
\label{Sep2}
\end{equation}
Since $H(0) = 0$, from \eqref{Sep1} and \eqref{Sep2}, for any given $a>0$ and sufficiently large $n$,
\begin{equation}
\sup_{x\geq 0}\vert Prob^*\left(\max_{i\in\mathcal{M}}\frac{\vert \sum_{l=1}^n\sum_{k=1}^rc_{ik}\left(\frac{\lambda_k}{\lambda_k^2 + \rho_n} + \frac{\rho_n\lambda_k}{(\lambda_k^2 + \rho_n)^2}\right)p_{lk}\epsilon^*_l\vert}{\tau_i}\leq x\right) - H(x)\vert < a
\label{INTZ}
\end{equation}

As a summary, for any given $a>0$, $\exists$ a constant $D_a$ such that for sufficiently large $n$, the event $\vert\widehat{\sigma}^2 - \sigma^2\vert\leq D_an^{-\alpha_\sigma}$, $\frac{1}{2}\sigma <\widehat{\sigma}<\frac{3}{2} \sigma$, $\widehat{\mathcal{N}}_{b_n} = \mathcal{N}_{b_n}$, $\Vert\widehat{\theta}\Vert_2\leq D_a\times n^{\alpha_\theta}\Rightarrow \frac{\rho_n^2\Vert\widehat{\theta}\Vert_2}{\lambda_r^3}\leq D_a^{'}n^{-\delta_1}$ for constant $D_a^{'}$ and  $\max_{i=1,2,...,p}\vert\rho_n^2\sum_{j=1}^r\frac{q_{ij}\widehat{\zeta}_j}{(\lambda_j^2 + \rho_n)^2}\vert\leq D_a \times n^{-\eta-\delta_1}$ happen with probability $1-a$. From \eqref{StarJ1}, assumption 5 and lemma \ref{CI_lemma_1}, we have for any $x\geq 0$, there exists a constant $C^{'}$ such that
\begin{equation}
\begin{aligned}
Prob^*\left(\max_{i=1,2,...,p_1}\frac{\vert\widehat{\gamma}^*_i - \widehat{\gamma}_i\vert}{\widehat{\tau}_i^*}\leq x\right) - H(x)\leq Prob^*\left( \widehat{\mathcal{N}}_{b_n}^* \neq \mathcal{N}_{b_n}\right)+ C^{'}\frac{\rho_n^2\Vert\widehat{\theta}\Vert_2}{\lambda^3_r}\times\left(1 + \sqrt{\log(\vert\mathcal{M}\vert)} + \sqrt{\vert\log(\frac{\rho_n^2\Vert\widehat{\theta}\Vert_2}{\lambda^3_r})\vert}\right)\\
+ Prob^*\left(\max_{i\in\mathcal{M}}\frac{\vert \sum_{l=1}^n\sum_{k=1}^rc_{ik}\left(\frac{\lambda_k}{\lambda_k^2 + \rho_n} + \frac{\rho_n\lambda_k}{(\lambda_k^2 + \rho_n)^2}\right)p_{lk}\epsilon^*_l\vert}{\tau_i}\leq x + \frac{\rho_n^2\Vert\widehat{\theta}\Vert_2}{\lambda^3_r}\right) - H(x + \frac{\rho_n^2\Vert\widehat{\theta}\Vert_2}{\lambda^3_r})\\
\leq a + C^{'}D_a^{'}(1+\sqrt{\log(n)})n^{-\delta_1} + C^{'}\sqrt{D_a^{'}}n^{-\delta_1/2}\sqrt{\vert \log(\frac{\rho_n^2\Vert\widehat{\theta}\Vert_2}{\lambda^3_r})\times \frac{\rho_n^2\Vert\widehat{\theta}\Vert_2}{\lambda^3_r}\vert} + Prob^*\left( \widehat{\mathcal{N}}_{b_n}^* \neq \mathcal{N}_{b_n}\right)\\
Prob^*\left(\max_{i=1,2,...,p_1}\frac{\vert\widehat{\gamma}^*_i - \widehat{\gamma}_i\vert}{\widehat{\tau}_i^*}\leq x\right) - H(x)
\geq Prob^*\left(\max_{i=1,2,...,p_1}\frac{\vert\widehat{\gamma}^*_i - \widehat{\gamma}_i\vert}{\widehat{\tau}_i^*}\leq x\cap\widehat{\mathcal{N}}_{b_n}^* = \mathcal{N}_{b_n}\right) - H(x)\\
\geq Prob^*\left(\max_{i\in\mathcal{M}}\frac{\vert \sum_{l=1}^n\sum_{k=1}^rc_{ik}\left(\frac{\lambda_k}{\lambda_k^2 + \rho_n} + \frac{\rho_n\lambda_k}{(\lambda_k^2 + \rho_n)^2}\right)p_{lk}\epsilon^*_l\vert}{\tau_i}\leq x - \frac{\rho_n^2\Vert\widehat{\theta}\Vert_2}{\lambda_r^3}\right)
-H(x - \frac{\rho_n^2\Vert\widehat{\theta}\Vert_2}{\lambda_r^3})\\ -Prob^*\left(\widehat{\mathcal{N}}_{b_n}^* \neq \mathcal{N}_{b_n}\right) - C^{'}D_a^{'}(1+\sqrt{\log(n)})n^{-\delta_1} - C^{'}\sqrt{D_a^{'}}n^{-\delta_1/2}\sqrt{\vert \log(\frac{\rho_n^2\Vert\widehat{\theta}\Vert_2}{\lambda^3_r})\times \frac{\rho_n^2\Vert\widehat{\theta}\Vert_2}{\lambda^3_r}\vert}
\end{aligned}
\end{equation}
If $0\leq x\leq \frac{\rho_n^2\Vert\widehat{\theta}\Vert_2}{\lambda_r^3}$, then $Prob^*\left(\max_{i\in\mathcal{M}}\frac{\vert \sum_{l=1}^n\sum_{k=1}^rc_{ik}\left(\frac{\lambda_k}{\lambda_k^2 + \rho_n} + \frac{\rho_n\lambda_k}{(\lambda_k^2 + \rho_n)^2}\right)p_{lk}\epsilon^*_l\vert}{\tau_i}\leq x - \frac{\rho_n^2\Vert\widehat{\theta}\Vert_2}{\lambda_r^3}\right) = H(x - \frac{\rho_n^2\Vert\widehat{\theta}\Vert_2}{\lambda_r^3}) = 0$. Therefore, for sufficiently large $n$, from \eqref{INTZ} and \eqref{StarVar}, $\exists$ a constant $C$ such that
\begin{equation}
\begin{aligned}
\sup_{x\geq 0}\vert Prob^*\left(\max_{i=1,2,...,p_1}\frac{\vert\widehat{\gamma}^*_i - \widehat{\gamma}_i\vert}{\widehat{\tau}_i^*}\leq x\right) - H(x)\vert\leq \frac{pE\widehat{\sigma}^m}{c_\lambda^mn^{m\eta}b_n^m}\times \left(2^m + \frac{2^m}{(1/c_b - 1)^m}\right) + a \\
+ C^{'}D_a^{'}(1+\sqrt{\log(n)})n^{-\delta_1} + C^{'}\sqrt{D_a^{'}}n^{-\delta_1/2}\sqrt{\sup_{x\in(0,1]}\vert x\log(x)\vert}
\leq Cn^{m(\nu_b + \alpha_p/m - \eta)}+ 2a
\end{aligned}
\end{equation}
and we prove \eqref{EstFir}.

For any given $a > 0$, from the first result, for sufficiently large $n$, we have
\begin{equation}
Prob\left(\sup_{x\geq 0}\vert Prob^*\left(\max_{i=1,2,...,p_1}\frac{\vert\widehat{\gamma}^*_i - \widehat{\gamma}_i\vert}{\widehat{\tau}_i^*}\leq x\right) - H(x)\vert \leq a\right) > 1-a
\label{FIRRES}
\end{equation}
Choose sufficiently small $a$ such that $0< 1-\alpha - 2a < 1-\alpha + 2a < 1$. If \eqref{FIRRES} happens, for any $1 >\alpha > 0$, define $c_{1-\alpha}$ as the $1-\alpha$ quantile of $H(x)$,
\begin{equation}
\begin{aligned}
Prob^*\left(\max_{i=1,2,...,p_1}\frac{\vert\widehat{\gamma}^*_i - \widehat{\gamma}_i\vert}{\widehat{\tau}_i^*}\leq c_{1-\alpha + 2a}\right) - (1-\alpha + 2a)\geq -a\Rightarrow c^*_{1-\alpha}\leq c_{1-\alpha + 2a}\\
Prob^*\left(\max_{i=1,2,...,p_1}\frac{\vert\widehat{\gamma}^*_i - \widehat{\gamma}_i\vert}{\widehat{\tau}_i^*}\leq c_{1-\alpha - 2a}\right) - (1-\alpha - 2a)\leq a\Rightarrow c^*_{1-\alpha} > c_{1-\alpha - 2a}
\end{aligned}
\end{equation}
From theorem \ref{Thm2}, we have for sufficiently large $n$,
\begin{equation}
\begin{aligned}
Prob\left(\max_{i=1,2,...,p_1}\frac{\vert\widehat{\gamma}_i - \gamma_i\vert}{\widehat{\tau}_i}\leq c^*_{1-\alpha}\right)\\
\leq Prob\left(\sup_{x\geq 0}\vert Prob^*\left(\max_{i=1,2,...,p_1}\frac{\vert\widehat{\gamma}^*_i - \widehat{\gamma}_i\vert}{\widehat{\tau}_i^*}\leq x\right) - H(x)\vert > a\right)
+ Prob\left(\max_{i=1,2,...,p_1}\frac{\vert\widehat{\gamma}_i - \gamma_i\vert}{\widehat{\tau}_i}\leq c_{1-\alpha + 2a}\right)\\
\leq a + (H(c_{1-\alpha + 2a}) + a) = 1-\alpha + 4a\\
Prob\left(\max_{i=1,2,...,p_1}\frac{\vert\widehat{\gamma}_i - \gamma_i\vert}{\widehat{\tau}_i}\leq c^*_{1-\alpha}\right)
\geq Prob\left(\max_{i=1,2,...,p_1}\frac{\vert\widehat{\gamma}_i - \gamma_i\vert}{\widehat{\tau}_i}\leq c^*_{1-\alpha}\cap \sup_{x\geq 0}\vert Prob^*\left(\max_{i=1,2,...,p_1}\frac{\vert\widehat{\gamma}^*_i - \widehat{\gamma}_i\vert}{\widehat{\tau}_i^*}\leq x\right) - H(x)\vert\leq a\right)\\
\geq Prob\left(\max_{i=1,2,...,p_1}\frac{\vert\widehat{\gamma}_i - \gamma_i\vert}{\widehat{\tau}_i}\leq c_{1-\alpha - 2a}\right) - Prob\left( \sup_{x\geq 0}\vert Prob^*\left(\max_{i=1,2,...,p_1}\frac{\vert\widehat{\gamma}^*_i - \widehat{\gamma}_i\vert}{\widehat{\tau}_i^*}\leq x\right) - H(x)\vert > a\right)\\
\geq (H(c_{1-\alpha-2a}) - a) - a = 1-\alpha - 4a
\Rightarrow \vert Prob\left(\max_{i=1,2,...,p_1}\frac{\vert\widehat{\gamma}_i - \gamma_i\vert}{\widehat{\tau}_i}\leq c^*_{1-\alpha}\right) - (1-\alpha)\vert\leq 4a
\end{aligned}
\end{equation}
For $a>0$ can be arbitrarily small, we prove \eqref{EstSec}.
\end{proof}

\section{Proofs of theorems in section \ref{BOOTPRD}}

\begin{proof}[Proof of lemma \ref{LEMMARES}]
Define the design matrix $X = (x_{ij})_{i = 1,...,n, j = 1,...,p}$, $\overline{x}_j = \frac{1}{n}\sum_{i=1}^nx_{ij}$, and $x_{ij}^{'} = x_{ij} - \overline{x}_j$. If $\widehat{\mathcal{N}}_{b_n} = \mathcal{N}_{b_n}$, for $i=1,2,...,n$,
\begin{equation}
\widehat{\epsilon}_i^{'} = \epsilon_i + \sum_{j\not\in\mathcal{N}_{b_n}}x_{ij}\theta_j - \sum_{j\in\mathcal{N}_{b_n}}x_{ij}(\widetilde{\theta}_j - \theta_j) \Rightarrow \widehat{\epsilon}_i = \epsilon_i - \frac{1}{n}\sum_{i=1}^n\epsilon_i + \sum_{j\not\in\mathcal{N}_{b_n}}x_{ij}^{'}\theta_j - \sum_{j\in\mathcal{N}_{b_n}}x_{ij}^{'}(\widetilde{\theta}_j - \theta_j)
\end{equation}
Define $\widetilde{F}(x) = \frac{1}{n}\sum_{i=1}^n\mathbf{1}_{\epsilon_i\leq x},\ x\in\mathbf{R}$. From \eqref{prop}, for any given $\psi > 0$,
\begin{equation}
\begin{aligned}
\widehat{F}(x) - F(x) = \left(\widehat{F}(x) - \widetilde{F}(x + 1/\psi)\right) + \left(\widetilde{F}(x + 1/\psi) - F(x + 1/\psi)\right) + \left(F(x + 1/\psi) - F(x)\right)\\
\leq \frac{1}{n}\sum_{i=1}^n(g_{\psi,x}(\widehat{\epsilon}_i) - g_{\psi,x}(\epsilon_i)) + \sup_{x\in\mathbf{R}}\vert \widetilde{F}(x) - F(x)\vert + \left(F(x + 1/\psi) - F(x)\right)\\
\leq g_*\psi\sqrt{\frac{1}{n}\sum_{i=1}^n(\widehat{\epsilon}_i - \epsilon_i)^2}+ \sup_{x\in\mathbf{R}}\vert \widetilde{F}(x) - F(x)\vert + \left(F(x + 1/\psi) - F(x)\right)\\
\widehat{F}(x) - F(x) = \left(\widehat{F}(x) - \widetilde{F}(x - 1/\psi)\right) + \left(\widetilde{F}(x - 1/\psi) - F(x - 1/\psi)\right) - \left(F(x) - F(x - 1/\psi)\right)\\
\geq \frac{1}{n}\sum_{i=1}^n(g_{\psi,x - 1/\psi}(\widehat{\epsilon}_i) - g_{\psi, x - 1/\psi}(\epsilon_i)) - \sup_{x\in\mathbf{R}}\vert\widetilde{F}(x) - F(x)\vert - \left(F(x) - F(x - 1/\psi)\right)\\
\geq -g_*\psi\sqrt{\frac{1}{n}\sum_{i=1}^n(\widehat{\epsilon}_i - \epsilon_i)^2} - \sup_{x\in\mathbf{R}}\vert\widetilde{F}(x) - F(x)\vert - \left(F(x) - F(x - 1/\psi)\right)\\
\Rightarrow \sup_{x\in\mathbf{R}}\vert \widehat{F}(x) - F(x)\vert\leq g_*\psi\sqrt{\frac{1}{n}\sum_{i=1}^n(\widehat{\epsilon}_i - \epsilon_i)^2} + \sup_{x\in\mathbf{R}}\vert\widetilde{F}(x) - F(x)\vert + \sup_{x\in\mathbf{R}}\vert F(x + 1/\psi) - F(x)\vert
\end{aligned}
\end{equation}
Suppose assumption 1 to 6. From \eqref{SIGMA}, \eqref{SIGMA2}, \eqref{XTHETA} and $\frac{1}{n}\sum_{i=1}^n\epsilon_i = O_p(1/\sqrt{n})$, for any $0<a<1$, $\exists$ a constant $C_a$ such that with probability at least $1-a$
\begin{equation}
\begin{aligned}
\frac{1}{n}\sum_{i=1}^n(\widehat{\epsilon}_i - \epsilon_i)^2 =
\frac{1}{n}\sum_{i=1}^n\left(\sum_{j\not\in\mathcal{N}_{b_n}}x_{ij}^{'}\theta_j - \sum_{j\in\mathcal{N}_{b_n}}x_{ij}^{'}(\widetilde{\theta}_j - \theta_j) - \frac{1}{n}\sum_{j=1}^n\epsilon_j\right)^2\\
\leq \frac{3}{n}\sum_{i=1}^n\left(\sum_{j\not\in\mathcal{N}_{b_n}}x_{ij}^{'}\theta_j\right)^2 + \frac{3}{n}\sum_{i=1}^n\left(\sum_{j\in\mathcal{N}_{b_n}}x_{ij}^{'}(\widetilde{\theta}_j - \theta_j)\right)^2 + 3\left(\frac{1}{n}\sum_{j=1}^n\epsilon_j\right)^2\\
\leq \frac{6}{n}\sum_{i=1}^n\left(\sum_{j\not\in\mathcal{N}_{b_n}}x_{ij}\theta_j\right)^2 + 6\left(\sum_{j\not\in\mathcal{N}_{b_n}}\overline{x}_j\theta_j\right)^2 + \frac{6}{n}\sum_{i=1}^n\left(\sum_{j\in\mathcal{N}_{b_n}}x_{ij}(\widetilde{\theta}_j - \theta_j)\right)^2 + 6\left(\sum_{j\in\mathcal{N}_{b_n}}\overline{x}_j(\widetilde{\theta}_j-\theta_j)\right)^2 + 3\left(\frac{1}{n}\sum_{j=1}^n\epsilon_j\right)^2\\
\leq C_an^{-\alpha_\sigma} + \frac{6}{n^2}\left(\sum_{i=1}^n\sum_{j\not\in\mathcal{N}_{b_n}}x_{ij}\theta_j\right)^2 + C_a\vert\mathcal{N}_{b_n}\vert(n^{2\alpha_\theta -4\delta} + n^{-2\eta})+ \frac{6}{n^2}\left(\sum_{i=1}^n\sum_{j\in\mathcal{N}_{b_n}}x_{ij}(\widetilde{\theta}_j-\theta_j)\right)^2 + \frac{C_a}{n}\\
\leq C_an^{-\alpha_\sigma} + \frac{6}{n}\sum_{i=1}^n\left(\sum_{j\not\in\mathcal{N}_{b_n}}x_{ij}\theta_j\right)^2 + C_a\vert\mathcal{N}_{b_n}\vert(n^{2\alpha_\theta -4\delta} + n^{-2\eta})+ \frac{6}{n}\sum_{i=1}^n\left(\sum_{j\in\mathcal{N}_{b_n}}x_{ij}(\widetilde{\theta}_j-\theta_j)\right)^2 + \frac{C_a}{n}\\
\Rightarrow \sqrt{\frac{1}{n}\sum_{i=1}^n(\widehat{\epsilon}_i - \epsilon_i)^2} = O_p(n^{-\alpha_\sigma / 2})
\end{aligned}
\label{DTS}
\end{equation}
According to Gilvenko-Cantelli lemma, $\sup_{x\in\mathbf{R}}\vert\widetilde{F}(x) - F(x)\vert\to 0$ almost surely. Therefore, for any $a > 0$ and sufficiently large $n$, $Prob\left(\sup_{x\in\mathbf{R}}\vert\widetilde{F}(x) - F(x)\vert\leq a\right)>1-a$. Choose sufficiently small $a$ and $\psi = 1/a$, from assumption 8. and \eqref{DTS}, we prove \eqref{PrdEq}.
\end{proof}

\begin{proof}[Proof of theorem \ref{THMPD}]
Define $X_f = (x_{f,ij})_{i=1,...,p_1,j=1,...,p}$. From theorem \ref{THM_CON}, since $p_1 = O(1)$,
\begin{equation}
\max_{i=1,2,...,p_1}\vert \sum_{j=1}^px_{f,ij}\widehat{\theta}_j - \sum_{j=1}^p x_{f,ij}\beta_j\vert = O_p(n^{-\eta})
\end{equation}
For any given $0<a<1$, choose a constant $C_a$ such that $Prob\left(\max_{i=1,2,...,p_1}\vert \sum_{j=1}^px_{f,ij}\widehat{\theta}_j - \sum_{j=1}^p x_{f,ij}\beta_j\vert\leq C_an^{-\eta}\right) \geq 1 - a$ for any $n=1,2,...$. Define $F^{-}(x) = \lim_{y<x,y\to x}F(y)$ for any $x\in\mathbf{R}$, and $G(x) = Prob\left(\max_{i=1,2,...,p_1}\vert\epsilon_{f,i}\vert\leq x\right) = (F(x) - F^{-}(-x))^{p_1}$ for $x\geq 0$. $G$ is continuous according to assumption 8. With probability at least $1-a$

\begin{equation}
\begin{aligned}
\sup_{x\geq 0}\vert Prob^*\left(\max_{i=1,2,...,p_1}\vert y_{f,i} - \sum_{j=1}^p x_{f,ij}\widehat{\theta}_j\vert\leq x\right) - G(x)\vert\\
\leq \sup_{x\geq 0}\vert Prob^*\left(\max_{i=1,2,...,p_1}\vert\epsilon_{f,i}\vert\leq x + \max_{i=1,2,...,p_1}\vert\sum_{j=1}^p x_{f,ij}(\beta_j - \widehat{\theta}_j)\vert\right) - G(x)\vert\\
+ \sup_{x\geq 0}\vert Prob^*\left(\max_{i=1,2,...,p_1}\vert\epsilon_{f,i}\vert\leq x - \max_{i=1,2,...,p_1}\vert\sum_{j=1}^p x_{f,ij}(\beta_j - \widehat{\theta}_j)\vert\right) - G(x)\vert\\
\leq \sup_{x\geq 0}\vert G(x + C_an^{-\eta}) - G(x)\vert + \sup_{x\geq 0}\vert G(x) - G(\max(0, x - C_an^{-\eta}))\vert
\end{aligned}
\label{PDF}
\end{equation}
For any $\delta > 0$ and any $x\geq 0$
\begin{equation}
\begin{aligned}
G(x+\delta) - G(x) = \sum_{i = 1}^{p_1} (F(x+\delta) - F(-x-\delta))^{i-1}\times(F(x) - F(-x))^{p_1-i}\times(F(x+\delta) - F(-x-\delta) - F(x) + F(-x))\\
\leq 2p_1\times\sup_{x\in\mathbf{R}}(F(x+\delta) - F(x)) \Rightarrow \sup_{x\geq 0}\left( G(x+\delta) - G(x)\right)\leq 2p_1\times\sup_{x\in\mathbf{R}}(F(x+\delta) - F(x))
\end{aligned}
\end{equation}
From \eqref{PDF} and assumption 8
\begin{equation}
\begin{aligned}
\sup_{x\geq 0}\vert Prob^*\left(\max_{i=1,2,...,p_1}\vert y_{f,i} - \sum_{j=1}^p x_{f,ij}\widehat{\theta}_j\vert\leq x\right) - G(x)\vert = o_p(1)
\end{aligned}
\label{PBG}
\end{equation}
If $\widehat{\mathcal{N}}_{b_n} = \mathcal{N}_{b_n}$, $\frac{\sigma}{2}<\widehat{\sigma} < \frac{3\sigma}{2}$, and $\Vert\widehat{\theta}\Vert_2\leq C\times n^{\alpha_\theta}$, then
\begin{equation}
\max_{i=1,2,...,p}\vert\rho_n^2\sum_{j=1}^r\frac{q_{ij}\widehat{\zeta}_j}{(\lambda_j^2 + \rho_n)^2}\vert\leq C n^{-\eta-\delta_1},\ \text{and } \max_{i=1,2,...,p}\vert \sum_{j=1}^rq_{ij}\left(\frac{\lambda_j}{\lambda_j^2 + \rho_n} + \frac{\rho_n\lambda_j}{(\lambda_j^2 + \rho_n)^2}\right)\sum_{l=1}^np_{lj}\epsilon_l\vert\leq Cn^{\alpha_p/m - \eta}
\label{ZZZ}
\end{equation}
for some constant $C$. Here $2\delta = \eta + \alpha_\theta +\delta_1$. From \eqref{StarVar}, $\exists$ a constant $E$ such that
\begin{equation}
Prob^*\left(\widehat{\mathcal{N}}_{b_n}^*\neq\mathcal{N}_{b_n}\right)\leq \frac{Ep}{n^{m\eta}b_n^m}
\label{SELE}
\end{equation}
If $\widehat{\mathcal{N}}_{b_n}^* = \mathcal{N}_{b_n}$,
\begin{equation}
\begin{aligned}
\vert\sum_{j=1}^px_{f,ij}\widehat{\theta}^*_j - \sum_{j=1}^px_{f,ij}\widehat{\theta}_j\vert = \vert\sum_{j\in\mathcal{N}_{b_n}}x_{f,ij}(\widetilde{\theta}^*_j-\widehat{\theta}_j)\vert \leq \rho_n^2\vert\sum_{k=1}^r\frac{c_{ik}\widetilde{\zeta}_k}{(\lambda_k^2 + \rho_n)^2}\vert + \vert\sum_{k=1}^r\sum_{l=1}^nc_{ik}\left(\frac{\lambda_k}{\lambda_k^2 + \rho_n} + \frac{\rho_n\lambda_k}{(\lambda_k^2 + \rho_n)^2}\right)p_{lk}\epsilon^*_l\vert\\
\leq \rho_n^2\frac{\sqrt{C_\mathcal{M}}\Vert\widehat{\theta}\Vert_2}{\lambda_r^4} + \vert\sum_{k=1}^r\sum_{l=1}^nc_{ik}\left(\frac{\lambda_k}{\lambda_k^2 + \rho_n} + \frac{\rho_n\lambda_k}{(\lambda_k^2 + \rho_n)^2}\right)p_{lk}\epsilon^*_l\vert
\end{aligned}
\end{equation}
Form \eqref{Delta2} and lemma \ref{lemma_Se}, $\exists$ a constant $E$ which only depends on $m$, and for any $1 > a>0$ with a sufficiently large $C_a>0$,
\begin{equation}
Prob^*\left(\max_{i=1,2,...,p_1}\vert\sum_{k=1}^r\sum_{l=1}^nc_{ik}\left(\frac{\lambda_k}{\lambda_k^2 + \rho_n} + \frac{\rho_n\lambda_k}{(\lambda_k^2 + \rho_n)^2}\right)p_{lk}\frac{\epsilon^*_l}{\widehat{\sigma}}\vert > \frac{C_an^{-\eta}}{\widehat{\sigma}}\right)\leq \frac{p_1E\widehat{\sigma}^m}{n^{m\eta}C_a^m n^{-m\eta}} < a
\end{equation}
Combine with \eqref{SELE}, there exists a constant $C_a$, with conditional probability at least $1-a$
\begin{equation}
\begin{aligned}
\max_{i=1,2,...,p_1}\vert\sum_{j=1}^px_{f,ij}\widehat{\theta}^*_j - \sum_{j=1}^px_{f,ij}\widehat{\theta}_j\vert\leq C_an^{-\eta}\\
\Rightarrow Prob^*\left(\max_{i=1,2,...,p_1}\vert y^*_{f,i} - \widehat{y}^*_{f,i}\vert\leq x\right) - G(x)\leq a + Prob^*\left(\max_{i=1,2,...,p_1}\vert\epsilon^*_{f,i}\vert\leq x + C_an^{-\eta}\right) - G(x)\\
\leq a + \sup_{x\geq 0}\vert Prob^*\left(\max_{i=1,2,...,p_1}\vert\epsilon^*_{f,i}\vert\leq x\right) - G(x)\vert + 2p_1\sup_{x\in\mathbf{R}}(F(x+C_an^{-\eta}) - F(x))\\
Prob^*\left(\max_{i=1,2,...,p_1}\vert y^*_{f,i} - \widehat{y}^*_{f,i}\vert\leq x\right) - G(x)\geq -a + Prob^*\left(\max_{i=1,2,...,p_1}\vert\epsilon^*_{f,i}\vert\leq x - C_an^{-\eta}\right) - G(x)\\
\geq -a + Prob^*\left(\max_{i=1,2,...,p_1}\vert\epsilon^*_{f,i}\vert\leq x - C_an^{-\eta}\right) - G(x - C_an^{-\eta}) - 2p_1\sup_{x\in\mathbf{R}}(F(x+C_an^{-\eta}) - F(x))
\end{aligned}
\end{equation}
Since $G(x) = 0$ and  $ Prob^*\left(\max_{i=1,2,...,p_1}\vert\epsilon^*_{f,i}\vert\leq x\right) = 0 $ if $x < 0$, we have
\begin{equation}
\sup_{x\geq 0}\vert Prob^*\left(\max_{i=1,2,...,p_1}\vert y^*_{f,i} - \widehat{y}^*_{f,i}\vert\leq x\right) - G(x)\vert\leq a + \sup_{x\geq 0}\vert Prob^*\left(\max_{i=1,2,...,p_1}\vert\epsilon^*_{f,i}\vert\leq x\right) - G(x)\vert + 2p_1\sup_{x\in\mathbf{R}}(F(x+C_an^{-\eta}) - F(x))
\end{equation}
From lemma \ref{LEMMARES}, for any $x\geq 0$,
\begin{equation}
\begin{aligned}
\vert Prob^*\left(\max_{i=1,2,...,p_1}\vert\epsilon^*_{f,i}\vert\leq x\right) - G(x)\vert = \vert\left(\widehat{F}(x) - \widehat{F}^{-}(-x)\right)^{p_1} - (F(x) - F(-x))^{p_1}\vert\\
\leq \sum_{i=1}^{p_1}\vert \widehat{F}(x) - \widehat{F}^{-}(-x)\vert^{i - 1}\times\vert F(x) - F(-x)\vert^{p_1 - i}\times\left(\vert\widehat{F}(x) - F(x)\vert + \vert \widehat{F}^{-}(-x) - F^{-}(-x)\vert\right)
\leq 2p_1\sup_{x\in\mathbf{R}}\vert \widehat{F}(x) - F(x)\vert\to_p 0
\end{aligned}
\end{equation}
as $n\to\infty$. From theorem \ref{thm1} and \eqref{Var1} to \eqref{Var3}, for any $1>a>0$, with probability at least $1-a$ $\exists$ a constant $C_a > 0$ such that for sufficiently large $n$, \eqref{ZZZ} happens with $C = C_a$ and $\sup_{x\geq 0}\vert Prob^*\left(\max_{i=1,2,...,p_1}\vert\epsilon^*_{f,i}\vert\leq x\right) - G(x)\vert < a$. Correspondingly for sufficiently large $n$, with probability at least $1-a$,
\begin{equation}
\begin{aligned}
\sup_{x\geq 0}\vert Prob^*\left(\max_{i=1,2,...,p_1}\vert y^*_{f,i} - \widehat{y}^*_{f,i}\vert\leq x\right) - Prob^*\left(\max_{i=1,2,...,p_1}\vert y_{f,i} - \widehat{y}_{f,i}\vert\leq x\right)\vert\\
\leq \sup_{x\geq 0}\vert Prob^*\left(\max_{i=1,2,...,p_1}\vert y^*_{f,i} - \widehat{y}^*_{f,i}\vert\leq x\right) - G(x)\vert + \sup_{x\geq 0}\vert Prob^*\left(\max_{i=1,2,...,p_1}\vert y_{f,i} - \widehat{y}_{f,i}\vert\leq x\right) - G(x) \vert\\
\leq a + \sup_{x\geq 0}\vert Prob^*\left(\max_{i=1,2,...,p_1}\vert\epsilon^*_{f,i}\vert\leq x\right) - G(x)\vert + 2p_1\sup_{x\in\mathbf{R}}(F(x+C_an^{-\eta}) - F(x)) + a\leq 4a
\end{aligned}
\label{Double}
\end{equation}
and we prove \eqref{PDFIR}.

For given $0<\alpha < 1$ and sufficiently small $a>0$ such that $0 < 1-\alpha-a < 1-\alpha + a < 1$, define $c_{1-\alpha}$ as the $1-\alpha$ quantile of $G(x)$. For $G(x)$ is continuous, $G(c_{1-\alpha}) = 1-\alpha$. With probability at least $1-a$, $\sup_{x\geq 0}\vert Prob^*\left(\max_{i=1,2,...,p_1}\vert y^*_{f,i} - \widehat{y}^*_{f,i}\vert\leq x\right) - G(x)\vert<a/2$. Correspondingly with probability at least $1-a$
\begin{equation}
\begin{aligned}
Prob^*\left(\max_{i=1,2,...,p_1}\vert y^*_{f,i} - \widehat{y}^*_{f,i}\vert\leq c_{1-\alpha + a}\right)\geq 1-\alpha+a/2\Rightarrow c^*_{1-\alpha}\leq c_{1-\alpha + a}\\
Prob^*\left(\max_{i=1,2,...,p_1}\vert y^*_{f,i} - \widehat{y}^*_{f,i}\vert\leq c_{1-\alpha - a}\right)\leq 1-\alpha -a/2\Rightarrow c^*_{1-\alpha}\geq c_{1-\alpha -a} \\
\end{aligned}
\end{equation}
From \eqref{PBG}, for sufficiently large $n$, with probability at least $1-a$
\begin{equation}
\begin{aligned}
Prob^*\left(\max_{i=1,2,...,p_1}\vert y_{f,i} - \widehat{y}_{f,i}\vert\leq c^*_{1-\alpha}\right)\leq Prob^*\left(\max_{i=1,2,...,p_1}\vert y_{f,i} - \widehat{y}_{f,i}\vert\leq c_{1-\alpha + a}\right)\leq 1-\alpha + 2a\\
Prob^*\left(\max_{i=1,2,...,p_1}\vert y_{f,i} - \widehat{y}_{f,i}\vert\leq c^*_{1-\alpha}\right)\geq Prob^*\left(\max_{i=1,2,...,p_1}\vert y_{f,i} - \widehat{y}_{f,i}\vert\leq c_{1-\alpha - a}\right)\geq 1-\alpha - 2a
\end{aligned}
\end{equation}
For $a>0$ can be arbitrarily small, we prove \eqref{PDFCI}.
\end{proof}
\end{document}